\DeclareMathOperator{\sgn}{sgn}
\newtheorem{definition}{Definition}[section]
\newtheorem{theorem}{Theorem}[section]
\newtheorem{lemma}{Lemma}[section]
\newtheorem{remark}{Remark}[section]
\newtheorem{proposition}{Proposition}[section]
\newtheorem*{maintheorem*}{Main Theorem}
\newtheorem{Alg}{Algorithm}[section]
\numberwithin{equation}{section}
\renewcommand{\i}{\ifmmode\mathit{\mathchar"7010 }\else\char"10 \fi}
\renewcommand{\j}{\ifmmode\mathit{\mathchar"7011 }\else\char"11 \fi}
\newcommand{\R}{\mathbb{R}}
\newcommand{\N}{\mathbb{N}}
\newcommand{\Z}{\mathbb{Z}}
\newcommand{\Cc}[1]{\mathbf{C_c^{#1}}}
\newcommand{\dd}{\mathrm{d}}
\newcommand{\BV}{\mathbf{BV}}
\renewcommand{\L}[1]{\mathbf{L^#1}}
\newcommand{\C}[1]{\mathbf{C^{#1}}}
{%

\begin{enumerate}}%
{\end{enumerate}}
\title{Nonlocal reaction traffic flow model with on-off ramps }
\begin{document}
\author{F. A. Chiarello} 
\address[Felisia Angela Chiarello]{\newline Department of Mathematical Sciences “G. L. Lagrange”, Politecnico di Torino, Corso Duca degli Abruzzi 24, 10129 Torino, Italy.}
\email[]{felisia.chiarello@polito.it}

\author{H. D. Contreras}
\address[Harold Deivi Contreras]{\newline
GIMNAP-Departamento de Matem\'aticas, Universidad del B\'io-B\'io, Concepci\'on, Chile,\newline CI${}^2$MA-Universidad de Concepci\'on, Casilla 160-C, Concepci\'on, Chile.}
\email[]{harold.contreras1801@alumnos.ubiobio.cl}

\author{L. M. Villada}
\address[Luis Miguel Villada]{\newline
GIMNAP-Departamento de Matem\'aticas, Universidad del B\'io-B\'io, Concepci\'on, Chile,\newline CI${}^2$MA-Universidad de Concepci\'on, Casilla 160-C, Concepci\'on, Chile.}
\email[]{lvillada@ubiobio.cl}

\date{\today}
\maketitle

\begin{abstract}
    We present a non-local version of a scalar balance law modeling traffic flow with on-ramps and off-ramps. The source term is used to describe the traffic flow over the on-ramp and off-ramps. We approximate the problem using an upwind-type numerical scheme and we provide $\L{\infty}$ and $\BV$ estimates for the sequence of approximate solutions. Together with a discrete entropy inequality, we also show the well-posedness of the considered class of scalar balance laws. Some numerical simulations illustrate the behaviour of solutions in sample cases.
\end{abstract}

\section{Introduction}
\subsection{Scope} Models of conservation laws with nonlocal flux are used to describe traffic flow dynamics in which drivers adapt their velocity with respect to what happens to the cars in front of them { \cite{Blandin2016WellposednessOA, Ch_G_global, GoatinScialanga, FKG2018, sopasakis2006stochastic}}. In this type of models, the flux function depends on a downstream convolution term between the density or the velocity of vehicles and a kernel function with support on the negative axis. However, the above models cannot be used to study the traffic flow on the highway with ramps since they did not include their presence. Indeed, ramps are an important element of traffic systems and develops some complex traffic phenomena, see \cite{han2020hierarchical,jacquet2005optimal,liptak2021traffic,sun2015study,tie2010effects,tie2009new,wang2017congested}. 

In this work, we propose a new nonlocal traffic model which includes the effects of on- and off- ramps. We start by considering a local reaction traffic model proposed in \cite{liptak2021traffic},
\begin{equation}\label{RTM}
    \rho_t+(\rho v(\rho))_x=S_{\mathrm{on}}-S_{\mathrm{off}},
\end{equation}
where the non-negative functions  $S_{\mathrm{on}}$ and $S_{\mathrm{off}}$ are the source and sink term, respectively, defined by
\begin{eqnarray}
\label{son_local}S_{\mathrm{on}}(t, x, \rho) &=&\mathbf{1}_{\mathrm{on}}(x)q_{\mathrm{on}}(t) (\rho_{\max}-\rho),\\
\label{soff_local}S_{\mathrm{off}}(t, x, \rho) &=&\mathbf{1}_{\mathrm{off}}(x)q_{\mathrm{off}}(t) \rho,
\end{eqnarray}

with $q_{\mathrm{on}}\in\R^{+}$, and $q_{\mathrm{off}}\in\R^{+}$ the rate of the on- and off-ramp respectively. The spatial position of the on- and off- ramp is described by indicator functions $\mathbf{1}_{\mathrm{on}}(x)$, and $\mathbf{1}_{\mathrm{off}}(x)$ defined as
\begin{equation*}
\mathbf{1}_{\mathrm{on}}(x)=
    \begin{cases}
    \frac{1}{L} \quad &\underline{x}_{\mathrm{on}}\leq x\leq \overline{x}_{\mathrm{on}},\\
    0 \quad&\text{otherwise},
    \end{cases}
    \qquad 
    \mathbf{1}_{\mathrm{off}}(x)=
    \begin{cases}
    \frac{1}{L} \quad &\underline{x}_{\mathrm{off}}\leq x\leq \overline{x}_{\mathrm{off}},\\
    0 \quad&\text{otherwise}.
    \end{cases}
\end{equation*}

In order to obtain a non-local version of the model \eqref{RTM}, we first rewrite the flux function $f(\rho)=\rho v(\rho)$ in its non-local version, see \cite{Amorim2013ANA, Blandin2016WellposednessOA, GoatinScialanga},
$$f(\rho)=\rho v(\rho*\omega_{\eta}), \quad \text{with}\quad (\rho*\omega_{\eta})(t,x)=\int_{x}^{x+\eta}\rho(t,y)\omega_{\eta}(y-x)\dd y.$$

On the on-ramp the idea is that at position $x$ the flow merging in the traffic way is inversely proportional to  the average density around position $x$, see Fig. \ref{fig:onofframp} , i.e, we write
\begin{equation}\label{nolreact1}
S_{\mathrm{on}}(t, x, \rho,\rho*\omega_{\eta,\delta}) =\mathbf{1}_{\mathrm{on}}(x) q_{\mathrm{on}}(t) (\rho_{\max}-\rho*\omega_{\eta,\delta}),
\end{equation}
with $$  (\rho*\omega_{\eta,\delta})(t,x)=\int_{x-\eta+\delta}^{x+\eta+\delta}\rho(t,y)\omega_{\eta,\delta}(y-x)\dd y,$$
with $\eta\in[0,1]$ and $\delta\in[-\eta,\eta]$. However, in the numerical test section we will see that the choice of the non-local term \eqref{nolreact1} does not guarantee that the proposed model satisfies a Maximum Principle, see Example 3. In order to overcome this difficulty, we consider a first variant of  \eqref{nolreact1} taking 
\begin{equation}\label{nolreact3}
S_{\mathrm{on}}(t, x, \rho,\rho*\omega_{\eta,\delta}) =\mathbf{1}_{\mathrm{on}}(x) q_{\mathrm{on}}(t) (\rho_{\max}-\rho)(\rho_{\max}-\rho*\omega_{\eta,\delta}).
\end{equation}
Note that this term contains a product which differentiates it from the original model. An alternative is to choose
\begin{equation}\label{nolreact2}
S_{\mathrm{on}}(t, x, \rho,\rho*\omega_{\eta,\delta}) =\mathbf{1}_{\mathrm{on}}(x) q_{\mathrm{on}}(t)\left(\rho_{\max}- \max\{\rho;\rho*\omega_{\eta,\delta}\}\right).
\end{equation}

\begin{figure}[t]
 \centering
 \begin{tikzpicture}[scale=0.80]
\pgfmathsetmacro{\celllength}{1}
\pgfmathsetmacro{\cellheight}{1}
\pgfmathsetmacro{\sigmah}{0.8}
\pgfmathsetmacro{\muh}{1}
\def\plotfunctwo(#1,#2,#3,#4){{1.5}}
	\draw[domain=-7.5*\celllength:7*\celllength,smooth,variable=\x] 
			plot ({\x},{\plotfunctwo(\x, \cellheight,\sigmah,\muh)});
\fill [red!8, domain=-3.7:-0.7, variable=\x]
      (-3.7, 0)
     -- plot ({\x},{\plotfunctwo(\x, \cellheight,\sigmah,\muh)})
      -- (-0.7, 0)
      -- cycle;

\def\plotfunctwo(#1,#2,#3,#4){{1.5}}
	\draw[domain=-7.5*\celllength:7*\celllength,smooth,variable=\x] 
			plot ({\x},{\plotfunctwo(\x, \cellheight,\sigmah,\muh)});
\fill [gray!20, domain=-7.5:-3.7, variable=\x]
      (-7.5, 0)
     -- plot ({\x},{\plotfunctwo(\x, \cellheight,\sigmah,\muh)})
      -- (-3.7, 0)
      -- cycle;	

\def\plotfunctwo(#1,#2,#3,#4){{1.5}}
	\draw[domain=-7.5*\celllength:7*\celllength,smooth,variable=\x] 
			plot ({\x},{\plotfunctwo(\x, \cellheight,\sigmah,\muh)});
\fill [gray!20, domain=-.7:7, variable=\x]
      (-.7, 0)
     -- plot ({\x},{\plotfunctwo(\x, \cellheight,\sigmah,\muh)})
      -- (7,0)
      -- cycle;

\def\plotfunctwo(#1,#2,#3,#4){{0}}
	\draw[domain=-7.5*\celllength:7*\celllength,smooth,variable=\x] 
			plot ({\x},{\plotfunctwo(\x, \cellheight,\sigmah,\muh)});
\fill [gray!20, domain=-3:-1, variable=\x]
      (-7.5, -2)
     -- plot ({\x},{\plotfunctwo(\x, \cellheight,\sigmah,\muh)})
      -- (-5,-2)
      -- cycle;

\def\plotfunctwo(#1,#2,#3,#4){{0}}
	\draw[domain=-7.5*\celllength:7*\celllength,smooth,variable=\x] 
			plot ({\x},{\plotfunctwo(\x, \cellheight,\sigmah,\muh)});
\fill [gray!20, domain=1:3, variable=\x]
      (3, -2)
     -- plot ({\x},{\plotfunctwo(\x, \cellheight,\sigmah,\muh)})
      -- (7,-2)
      -- cycle;	
	\draw [fill=green!80] (-7.5,0) -- (-3,0) -- (-7.5,-2) -- cycle;
     \draw [fill=green!80] (-5,-2) -- (-1,0) -- (1,0) --(5,-2) -- cycle;
     \draw [fill=green!80] (3,0) -- (7,0) --(7,-2) -- cycle; 
\foreach \x in {-7.5}
{
    \shade[top color=white, bottom color=yellow, shading angle={135}]
        [draw=black,fill=red!20,rounded corners=0.7ex,very thick]
        (\x+0.01,0.7) -- ++(0,0.26) --  ++(0.8,0) -- ++(0.2,-0.06) -- ++(0,-0.2) -- (\x+0.01,0.7) -- cycle;
    \draw[very thick, rounded corners=0.5ex,fill=cyan!50,thick]  (\x+0.17,0.96) -- ++(0.12,0.14) -- ++(0.32,0) -- ++(0.2,-0.14) -- (\x+0.17,0.9	6);    
    \draw[draw=black,fill=black!50,thick] (\x+0.26,0.7) circle (.1);
    \draw[draw=black,fill=black!50,thick] (\x+0.81,0.7) circle (.1);
}
\foreach \x in {-5.5}
{
	
    \shade[top color=white!50, bottom color=orange, shading angle={135}]
        [draw=black,fill=red!20,rounded corners=0.7ex,very thick]
        (\x+0.01,0.7) -- ++(0,0.26) --  ++(0.8,0) -- ++(0.2,-0.06) -- ++(0,-0.2) -- (\x+0.01,0.7) -- cycle;
    \draw[very thick, rounded corners=0.5ex,fill=cyan!50,thick]  (\x+0.17,0.96) -- ++(0.12,0.14) -- ++(0.32,0) -- ++(0.2,-0.14) -- (\x+0.17,0.9	6);    
    \draw[draw=black,fill=black!50,thick] (\x+0.26,0.7) circle (.1);
    \draw[draw=black,fill=black!50,thick] (\x+0.81,0.7) circle (.1);
}

\foreach \x in {-4, -2.5, -1.4}
{
	
    \shade[top color=white!50, bottom color=red, shading angle={135}]
        [draw=black,fill=red!20,rounded corners=0.7ex,very thick]
       (\x+0.01,0.7) -- ++(0,0.26) --  ++(0.8,0) -- ++(0.2,-0.06) -- ++(0,-0.2) -- (\x+0.01,0.7) -- cycle;
    \draw[very thick, rounded corners=0.5ex,fill=cyan!50,thick]  (\x+0.17,0.96) -- ++(0.12,0.14) -- ++(0.32,0) -- ++(0.2,-0.14) -- (\x+0.17,0.9	6);    
    \draw[draw=black,fill=black!50,thick] (\x+0.26,0.7) circle (.1);
    \draw[draw=black,fill=black!50,thick] (\x+0.81,0.7) circle (.1);
}

\foreach \x in {0.}
{
	
Car has a length of 1
Car begin
Body
    \shade[top color=white!50, bottom color=orange, shading angle={135}]
        [draw=black,fill=red!20,rounded corners=0.7ex,very thick]
        (\x+0.01,0.7) -- ++(0,0.26) --  ++(0.8,0) -- ++(0.2,-0.06) -- ++(0,-0.2) -- (\x+0.01,0.7) -- cycle;
Windows        
    \draw[very thick, rounded corners=0.5ex,fill=cyan!50,thick]  (\x+0.17,0.96) -- ++(0.12,0.14) -- ++(0.32,0) -- ++(0.2,-0.14) -- (\x+0.17,0.9	6);    
Wheels
    \draw[draw=black,fill=black!50,thick] (\x+0.26,0.7) circle (.1);
    \draw[draw=black,fill=black!50,thick] (\x+0.81,0.7) circle (.1);
Car end
}

\foreach \x in {2.5}
{
    \shade[top color=white, bottom color=yellow, shading angle={135}]
        [draw=black,fill=red!30,rounded corners=0.7ex,very thick]
        (\x+0.01,0.7) -- ++(0,0.26) --  ++(0.8,0) -- ++(0.2,-0.06) -- ++(0,-0.2) -- (\x+0.01,0.7) -- cycle;
    \draw[very thick, rounded corners=0.5ex,fill=cyan!50,thick]  (\x+0.17,0.96) -- ++(0.12,0.14) -- ++(0.32,0) -- ++(0.2,-0.14) -- (\x+0.17,0.9	6);    
    \draw[draw=black,fill=black!50,thick] (\x+0.26,0.7) circle (.1);
    \draw[draw=black,fill=black!50,thick] (\x+0.81,0.7) circle (.1);
}

\foreach \x in {-4.5}
{
\begin{scope}[rotate=30,shift={(	1.6,2.)}]
    \shade[top color=white, bottom color=orange, shading angle={135}]
        [draw=black,fill=red!20,rounded corners=0.7ex,very thick,]
        (\x+0.01,-1) -- ++(0,0.26) --  ++(0.8,0) -- ++(0.2,-0.058) -- ++(0,-0.2) -- (\x+0.01,-1) -- cycle;
       
    \draw[very thick, rounded corners=0.5ex,fill=cyan!50,thick]  (\x+0.17,-0.74) -- ++(0.12,0.14) -- ++(0.32,0) -- ++(0.2,-0.14) -- (\x+0.17,-0.74);    
    \draw[draw=black,fill=black!50,thick] (\x+0.26,-1) circle (.1);
    \draw[draw=black,fill=black!50,thick] (\x+0.81,-1) circle (.1);
\end{scope}
}

\foreach \x in {2.2}
{
\begin{scope}[rotate=-30,shift={(1,-0.9)}]
    \shade[top color=white, bottom color=blue, shading angle={135}]
        [draw=black,fill=red!20,rounded corners=0.7ex,very thick,]
        (\x+0.01,2) -- ++(0,0.26) --  ++(0.8,0) -- ++(0.2,-0.06) -- ++(0,-0.2) -- (\x+0.01,2) -- cycle;
    \draw[very thick, rounded corners=0.5ex,fill=cyan!50,thick]  (\x+0.17,2.26) -- ++(0.12,0.14) -- ++(0.32,0) -- ++(0.2,-0.14) -- (\x+0.17,2.26);    
    \draw[draw=black,fill=black!50,thick] (\x+0.26,2) circle (.1);
    \draw[draw=black,fill=black!50,thick] (\x+0.81,2) circle (.1);
\end{scope}
}

\draw[-,semithick] (-7.5,1.5) -- (7,1.5);   
 \draw[-,semithick] (-7.5,0) -- (-3,0);
\draw[dotted](-3,0)--(-1,0); 
\draw[-,semithick] (-1,0) -- (1,0);	
\draw[dotted] (1,0) -- (3,0);	
\draw[-,semithick] (3,0) -- (5,0);	
\draw[-,semithick](-7.5,-2)--(-3,0);
\draw[-,semithick](-1,0)--(-5,-2);
\draw[dotted] (-3.7,0) -- (-3.7,1.5);	
\draw[dotted] (-0.7,0) -- (-0.7	,1.5);
\draw (-6,-2.3) node {$q_{\mathrm{on}}$};
\draw (-3.7,1.7) node {\footnotesize{$x-\eta+\delta$}};
\draw (-0.7,1.7) node {\footnotesize{$x+\eta+\delta$}};
\draw (-2.2,1.7) node {\footnotesize{$x$}};
\draw[-,semithick](1,0)--(5,-2);
\draw[-,semithick](3,0)--(7,-2);
\draw (6,-2.3) node {$q_{\mathrm{off}}$};

%
\end{tikzpicture}
 \caption{Illustration of our model setting.}\label{fig:onofframp}
 \end{figure}
The purpose of this work is the study of the well-posedness of a nonlocal reaction traffic flow model with source term given by \eqref{nolreact3} and \eqref{nolreact2}.
\subsection{Related work} 
In \cite{bayen2020modeling, Blandin2016WellposednessOA, chiarello2020non, Ch_G_global, chiarello2019non,  GoatinScialanga, FKG2018} the authors studied a nonlocal conservation law to model vehicular traffic flow in the case $S_{\mathrm{on}}=S_{\mathrm{off}}=0$, i.e., without on- and off-ramps. 
The need to design more realistic models has led to the development of multi-lane vehicular traffic models among which we can highlight the following. In \cite{holden2019models}, it is introduced a new local model for multilane dense vehicular traffic by means of a system of a weakly coupled scalar conservation laws. In \cite{goatin2019multilane}, the authors consider the model proposed in \cite{holden2019models} but with a more general source terms and they allow for the presence of space discontinuities both in the speed law and in the number of lanes; in these two local models the source term accounts for the lane change rate and the key assumption is that the drivers prefer to drive faster, and that the tendency of a vehicle change the lines is proportional to the difference in velocity between neighboring lanes. In \cite{friedrich2020nonlocal} is studied a multilane model with local and non-local flux combined with a source term that also incorporates a nonlocality; here, the non-local source term describes the lane changing rate depending on a (nonlinear) evaluation of the velocity. In particular, the lane changing rate is proportional to the difference in the velocity between two adjacent lanes, but the velocities are evaluated in a neighbourhood of the current position, moreover, this rate is proportional also to the density in the receiving lane, meaning that if that lane is crowded only a few vehicles can actually change lane.\\  
Regarding to vehicular traffic flow models taking into account the presence of ramps we can mention \cite{liptak2021traffic}, where the authors study the (local) first order nonlinear conservation law \eqref{RTM}. In \cite{tie2009new} a (local) second order model is proposed to study the effects of on- and off-ramps on a main road traffic during two rush periods. Likewise, other works about the study of effects of ramps in vehicular traffic flow models are referenced in \cite{tie2009new}. In particular, in \cite{delle2014pde} the authors consider a Lighthill-Witham-Richards (LWR) traffic flow model on a junction composed by one mainline, an on-ramp and an off-ramp, which are connected by a node.  Moreover, in \cite{helbing2001master} a non-local gas-kinetic traffic model including ramps  is proposed, the model allows to simulate syncronized congested traffic and reproduces realistic phenomena of vehicular traffic by variations of the on-ramp flow . In \cite{liu1996modelling} a new modeling methodology for merging and diverging flows is studied, the methodology includes coupling effects between main and ramps flows and a new formulation for the modeling of traffic friction is also introduced.\\

\subsection{Outline of the paper} This work is organized as follows: In Section \ref{sec:model} we present the proposed mathematical model with all the considered assumptions on it. Afterwards, we introduce an upwind-type Scheme with two different source terms and derive important properties such as maximum principle, $\L{1}-$ bound and $\BV$ estimates. Furthermore, we derive the $\L{1}-$Lipschitz continuous dependence of solutions to \eqref{nonRTM} on the initial data and the terms $q_{\mathrm{on}}$ and $q_{\mathrm{off}}$ in Section \ref{sec:Existence}.  In Section \ref{Sec:Num_Exp}, we present numerical examples  illustrating the behavior of the solutions of our model.               

\section{Mathematical model }\label{sec:model} 
The main goal of this work is to study the well-posedness of the non-local reaction traffic model 
\begin{equation}\label{nonRTM}
    \rho_t+(\rho v(\rho*\omega_{\eta}))_x=S_{\mathrm{on}}(\cdot, \cdot, \rho,\rho*\omega_{\eta,\delta})-S_{\mathrm{off}}(\cdot, \cdot, \rho), \quad x\in\R,
\end{equation}
where $S_{\mathrm{on}}(\cdot, \cdot, \rho,\rho*\omega_{\eta,\delta})$ defined  in \eqref{nolreact3} or \eqref{nolreact2}, $S_{\mathrm{off}}$ defined by \eqref{soff_local} and initial condition 
\begin{eqnarray}\label{initial_condition}
 \rho(x,0)=\rho_0(x)\in \left(\L{1}\cap\BV\right)(\R,[0,1]).
\end{eqnarray}
From now on we called Model 0 the equations \eqref{nonRTM}-\eqref{nolreact1}-\eqref{initial_condition}, Model 1 the equations \eqref{nonRTM}-\eqref{nolreact3}-\eqref{initial_condition}, and Model 2 \eqref{nonRTM}-\eqref{nolreact2}-\eqref{initial_condition}. Let us assume the following assumptions:
\begin{equation}\label{Assumption1}\tag{H1}
\begin{array}{ll}
q_{\mathrm{on}}\in\L{\infty}(\R^{+};\R^{+}),q_{\mathrm{off}}\in\L{\infty}(\R^{+};\R^{+}).\\
v\in\C{2}(\R;[0,1]) \ v'\leq0.\\
 \omega_{\eta}\in\C{1}([0,\eta];\R^{+})\ \textup{with} \  \omega'_{\eta}(x)\leq0, \ \int_{0}^{\eta}\omega_{\eta}(x)\dd x=1, \ \forall\eta>0.\\
\omega_{\eta,\delta}\in\C{1}([\delta-\eta,\delta+\eta];\R^{+}) \ \textup{with} \ \omega'(x)_{\eta,\delta}\geq0 \ \text{for} \ x\in[\delta-\eta,0],\\
\omega'(x)_{\eta,\delta}\leq0 \ \text{ for } x\in[0,\delta+\eta],\ \text {and}\ \int_{\delta-\eta}^{\delta+\eta}\omega_{\eta,\delta}(x)\dd x=1,\ \forall\eta>0.
\end{array}
\end{equation}
We recall the definition of weak entropy solution for \eqref{nonRTM}.

\begin{definition}\label{weak_Sol}
Let $\rho_{0}\in (\L{1}\cap\BV)(\R;[0,1]).$ We say that $\rho\in\mathbf{C}([0,T]; \L{1}(\R;[0,1])),$ with $\rho(t,\cdot)\in\BV(\R;[0,1])$ for $t\in[0,T]$, is a weak solution to \eqref{nonRTM} with initial datum $\rho_{0}$ if for any $\varphi\in\Cc{1}([0,T[\times\R;\R) $
\begin{eqnarray*}
\int_{0}^{T}\int_{\R}\left(\rho\varphi_{t}+\rho V\varphi_{x}\right)\dd x\dd t+\int_{0}^{T}\int_{\Omega_{\mathrm{on}}}S_{\mathrm{on}}\varphi\dd x\dd t-\int_{0}^{T}\int_{\Omega_{\mathrm{off}}}S_{\mathrm{off}}\varphi\dd x\dd t+\int_{\R}\rho_{0}(x)\varphi(0,x)\dd x=0,
\end{eqnarray*}
where $V(t,x)=v((\rho*\omega)(t,x))$ and $S_{\mathrm{on}}$ is as in \eqref{nolreact3} or \eqref{nolreact2}.
\end{definition}
\begin{definition}\label{entropy_WS}
Let $\rho_{0}\in (\L{1}\cap\BV)(\R;[0,1]).$ We say that $\rho\in\mathbf{C}([0,T]; \L{1}(\R;[0,1])),$ with $\rho(t,\cdot)\in\BV(\R;[0,1])$ for $t\in[0,T]$, is a entropy weak solution to \eqref{nonRTM} with initial datum $\rho_{0}$ if for any $\varphi\in\Cc{1}([0,T[\times\R;\R)$ and for all $k\in\R$
\begin{eqnarray*}
\int_{0}^{T}\int_{\R}\left(\vert\rho-k \vert\varphi_{t}+\left|\rho-k\right| V\varphi_{x}-\sgn(\rho-k)k V_{x }\varphi\right)\dd x\dd t+\int_{0}^{T}\int_{\Omega_{\mathrm{on}}}\sgn(\rho-k)S_{\mathrm{on}}\varphi\dd x \dd t\\
-\int_{0}^{T}\int_{\Omega_{\mathrm{off}}}\sgn(\rho-k)S_{\mathrm{off}}\varphi\dd x\dd t+\int_{\R}\vert\rho_{0}-k \vert\varphi(0,x)\dd x\geq0.
\end{eqnarray*} 
\end{definition}
Our main result is given by the following theorem, which states the well-posedness of problem \eqref{nonRTM} to \eqref{initial_condition} with source term given by \eqref{nolreact3} or \eqref{nolreact2}.
\begin{theorem}\label{teo:uniq_exist}
Let $\rho_{0}\in\left(\L{1}\cap\BV\right)\left(\R;[0,1]\right)$. Assume $v\in\C{2}\left([0,1];\R\right).$ Then, for all $T>0$, the problem \eqref{nonRTM} has a unique solution $\rho\in\C{0}\left([0,T];\L{1}(\R;[0,1])\right)$ in the sense of Definition \ref{entropy_WS}. Moreover, the following estimates hold: for any $t\in[0,T]$
\begin{equation*}
\begin{array}{ll}
\left\|\rho(t)\right\|_{\L{1}(\R)}\leq \mathcal{R}_{1}(t),\\
0\leq\rho(t,x)\leq1,\\
TV(\rho(t))\leq e^{t\mathcal{H}}\left(TV(\rho_{0})+t\left(\frac{\left\|q_{\mathrm{on}}\right\|_{\L{\infty}([0,T])}+\left\|q_{\mathrm{off}}\right\|_{\L{\infty}([0,T])}}{L}\right)\right),
\end{array}
\end{equation*}
where 
\begin{eqnarray}
\label{R_1}\mathcal{R}_{1}&=&\left\|\rho_{0}\right\|_{\L{1}(\R)}+\left\|q_{\mathrm{on}(\cdot)}\right\|_{\L{1}([0,t])}-\min_{x\in\Omega_{\mathrm{on}}}\left\|q_{\mathrm{on}}(\cdot)\rho(\cdot,x)\right\|_{\L{1}([0,t])}\\
&&-\min_{x\in\Omega_{\mathrm{off}}}\left\|q_{\mathrm{off}}(\cdot)\rho(\cdot,x)\right\|_{\L{1}([0,t])},\nonumber\\
\label{H}\mathcal{H}&=&2\left\|q_{\mathrm{on}}\right\|_{\L{\infty}([0,T])}+\left\|q_{\mathrm{off}}\right\|_{\L{\infty}([0,T])}+\omega_{\eta}(0)\mathcal{L}\\
\label{L}\mathcal{L}&=&\left(\|v\|_{\L{\infty}([0,1])}+\|v'\|_{\L{\infty}([0,1])}\right).
\end{eqnarray}

\end{theorem}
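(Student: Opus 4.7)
The plan is to follow the classical finite-volume strategy for nonlocal conservation laws: construct approximate solutions via the upwind-type scheme already announced in the abstract, derive uniform $\L{\infty}$, $\L{1}$ and $\BV$ bounds on the discrete solutions, pass to the limit by Helly's compactness theorem, and identify the limit as the unique entropy solution via a discrete entropy inequality together with an $\L{1}$-stability estimate. Let $\rho_j^n$ denote the cell averages and let $\lambda=\Delta t/\Delta x$ satisfy a suitable CFL condition. The scheme reads, roughly,
\begin{equation*}
\rho_j^{n+1}=\rho_j^n-\lambda\bigl(\rho_j^n V_{j+1/2}^n-\rho_{j-1}^n V_{j-1/2}^n\bigr)+\Delta t\,\bigl(S_{\mathrm{on},j}^n-S_{\mathrm{off},j}^n\bigr),
\end{equation*}
where $V_{j+1/2}^n=v\bigl((\rho^n*\omega_\eta)_{j+1/2}\bigr)$ is computed from a discrete convolution.

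First I would establish the maximum principle $0\le\rho_j^n\le 1$ by induction on $n$. Writing $\rho_j^{n+1}$ as a convex combination of $\rho_j^n$ and $\rho_{j-1}^n$ plus the source contributions, positivity is immediate from nonnegativity of $v$ and $S_{\mathrm{on}}$ together with the linear bound on $S_{\mathrm{off}}$; the upper bound $\rho_j^{n+1}\le 1$ is the more delicate one and is precisely the reason for choosing \eqref{nolreact3} or \eqref{nolreact2} rather than \eqref{nolreact1}: in both admissible cases one checks that $S_{\mathrm{on}}$ vanishes as soon as $\rho_j^n\to 1$, so that the source cannot push the density past $\rho_{\max}=1$ under a CFL condition of the form $\lambda\|v\|_\infty+\Delta t(\|q_{\mathrm{on}}\|_\infty+\|q_{\mathrm{off}}\|_\infty)/L\le 1$. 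From this, the $\L{1}$ estimate $\mathcal{R}_1$ in \eqref{R_1} follows by summing the scheme over $j$, using telescoping of the flux term and integrating in time the contributions of $S_{\mathrm{on}}$ and $S_{\mathrm{off}}$.

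The heart of the work is the $\BV$ estimate. I would write a discrete evolution equation for the differences $\rho_{j+1}^n-\rho_j^n$, exploiting the mean value theorem on $v$ and the fact that $V_{j+1/2}^n-V_{j-1/2}^n$ involves differences of convolutions, which produce factors of order $\omega_\eta(0)$ times the local jump of $\rho$. A careful tracking (summation by parts, monotonicity of $\omega_\eta$, and the sign structure of the source) yields a discrete Gronwall inequality of the form
\begin{equation*}
\mathrm{TV}(\rho^{n+1})\le (1+\Delta t\,\mathcal{H})\,\mathrm{TV}(\rho^n)+\Delta t\,\tfrac{\|q_{\mathrm{on}}\|_\infty+\|q_{\mathrm{off}}\|_\infty}{L},
\end{equation*}
with $\mathcal{H}$ and $\mathcal{L}$ as in \eqref{H}--\eqref{L}; iterating gives the announced exponential bound. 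The main obstacle here is the Model 2 source term \eqref{nolreact2} with $\max\{\rho,\rho*\omega_{\eta,\delta}\}$: since the map $(a,b)\mapsto\max\{a,b\}$ is only Lipschitz, one must split the sum over cells according to which argument attains the maximum and bound each piece separately using $\mathrm{TV}(\max\{f,g\})\le\mathrm{TV}(f)+\mathrm{TV}(g)$; this is where I expect the fight with constants to be most painful.

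Once these a~priori bounds are in place, a standard Helly argument produces $\rho^{\Delta x}\to\rho$ in $\L{1}_{\mathrm{loc}}$, and time continuity follows from the scheme itself. To pass to the entropy inequality of Definition \ref{entropy_WS}, I would test the scheme against $|\rho_j^n-k|$, using the monotonicity of the upwind numerical flux in the Crandall--Majda sense together with the pointwise identity $\sgn(\rho-k)S_{\mathrm{on}}(\cdot,\rho,\rho*\omega_{\eta,\delta})\ge S_{\mathrm{on}}(\cdot,\rho,\rho*\omega_{\eta,\delta})\,\sgn(\rho-k)$ (which is trivial since $S_{\mathrm{on}}\ge 0$), obtaining a Kuznetsov-type discrete entropy inequality whose limit yields Definition \ref{entropy_WS}. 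For uniqueness and $\L{1}$-stability I would adapt the doubling-of-variables argument of Kru\v zkov to the nonlocal setting as in \cite{Ch_G_global, Blandin2016WellposednessOA}: writing the entropy inequality for two solutions $\rho,\tilde\rho$ and for a symmetric test function, the main new contribution is a source-term remainder which is controlled by $\|\rho-\tilde\rho\|_{\L{1}}$ plus data differences via the Lipschitz dependence of $S_{\mathrm{on}}$, $S_{\mathrm{off}}$ on their density arguments; a Gronwall argument then closes the $\L{1}$-Lipschitz dependence on the initial datum and on $q_{\mathrm{on}},q_{\mathrm{off}}$, which is what the next section of the paper is announced to prove.
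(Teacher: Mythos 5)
Your plan is essentially the paper's own proof: an upwind finite-volume approximation (the paper uses operator splitting for the source, you write the unsplit one-step version, which changes nothing substantive), a maximum principle by writing the update as a convex combination under the CFL condition, an $\L{1}$ bound by summation, a $\BV$ bound via Lipschitz continuity of $S_{\mathrm{on}},S_{\mathrm{off}}$ and a discrete Gronwall iteration with the constants $\mathcal{H},\mathcal{L}$, Helly compactness plus a Lax--Wendroff/Crandall--Majda discrete entropy inequality to identify the limit, and Kru\v{z}kov doubling of variables with Gronwall for uniqueness and $\L{1}$-stability. Even your treatment of the Model~2 term $\max\{\rho,\rho*\omega_{\eta,\delta}\}$ (splitting by which argument attains the maximum) is just another way of obtaining the Lipschitz bound the paper proves via $\max\{a,b\}=\tfrac12(a+b+|a-b|)$, so the proposal matches the paper's route.
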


\section{Existence of entropy solution}\label{sec:Existence}
\subsection{Numerical discretization}
We take a space step $\Delta x$ such that $\eta=N\Delta x$, for some $N\in\N$, and a time step $\Delta t$ subject to a CFL condition which will be specified later. For any $j\in\Z$, let $x_{j-1/2}=j\Delta x$ be a cells interfaces, $x_{j}=\bigg(j+\frac{1}{2}\bigg)\Delta x$ the cells centers. We consider ramps with length $L$ and take $L=\ell\Delta x, \ \textup{for some} \  \ell\in\Z^{+}$ such that $\underline{x}_{\mathrm{on}}=x_{\underline{k}_{\mathrm{on}}+1/2}$, $\overline{x}_{\mathrm{on}}=x_{\underline{k}_{\mathrm{on}}+1/2+\ell}$, $\underline{x}_{\mathrm{off}}=x_{\underline{k}_{\mathrm{off}}+1/2}$ and $\overline{x}_{\mathrm{off}}=x_{\underline{k}_{\mathrm{off}}+1/2+\ell}$, for some $\underline{k}_{\mathrm{on}}, \underline{k}_{\mathrm{off}}\in\Z$. With this notation, we define the subdomains $\Omega_{\mathrm{on}}=[\underline{x}_{\mathrm{on}},\overline{x}_{\mathrm{on}}]$, $\Omega_{\mathrm{off}}=[\underline{x}_{\mathrm{off}},\overline{x}_{\mathrm{off}}]$, and we put $\Omega_{\mathrm{on}}^{k}=[\underline{k}_{\mathrm{on}}+1,\underline{k}_{\mathrm{on}}+\ell]$ and $\Omega_{\mathrm{off}}^{k}=[\underline{k}_{\mathrm{off}}+1,\underline{k}_{\mathrm{off}}+\ell]$.\\
We fix $T>0$, and set $N_{T}\in\N$ such that $N_{T}\Delta t\leq T<\left(N_{T}+1\right)\Delta t$ and define the time mesh as $t^{n}=n\Delta t$ for $n=0,\ldots,N_{T}$. Set $\lambda=\Delta t/\Delta x$. The initial data is approximated for $j\in\Z,$ as follows:
\begin{eqnarray*}
\rho_{j}^{0}=\frac{1}{\Delta x}\int_{x_{j-1/2}}^{x_{j+1/2}}\rho_{0}(x)\dd x.
\end{eqnarray*}
We define a piecewise constant approximate solution $\rho_{\Delta}(t,x)$ to \eqref{nonRTM} as
\begin{eqnarray}\label{Sln_approx_tramos}
\rho_{\Delta}(t,x)=\rho_j^n,\quad \text{ for } 
\begin{cases}
t\in\left[t^n,t^{n+1}\right[ \\
x\in]x_{j-1/2},x_{j+1/2}],
\end{cases}\quad \text{where } \begin{array}{lcc}
             n=0,\dots,N_{T}-1, \\
             j\in\Z.
             \end{array} \
\end{eqnarray}
The $S_{\mathrm{on}}$ terms \eqref{nolreact3} and \eqref{nolreact2} are discretized via 
\begin{eqnarray}\label{Son_def1}
S_{\mathrm{on}}\left(t^{n+1/2},x_{j},q_{\mathrm{on}},\rho_{j}^{n+1/2},R_{\mathrm{on},j}^{n+1/2}\right)&=&\mathbf{1}_{\mathrm{on},j}q_{\mathrm{on}}^{n+1/2}(1-\rho_{j}^{n+1/2})(1-R_{\mathrm{on},j}^{n+1/2}),\\
\label{Son_def2}S_{\mathrm{on}}\left(t^{n+1/2},x_{j},q_{\mathrm{on}},\rho_{j}^{n+1/2},R_{\mathrm{on},j}^{n+1/2}\right)&=&\mathbf{1}_{\mathrm{on},j}q_{\mathrm{on},j}^{n+1/2}\left(1-\max\left\{\rho_{j}^{n+1/2},R_{\mathrm{on},j}^{n+1/2}\right\}\right).
\end{eqnarray}
The $S_{\mathrm{off}}$ term is discretizated via
\begin{eqnarray}
\label{Soff_def}S_{\mathrm{off}}\left(t^{n+1/2},x_{j},q_{\mathrm{off}},\rho_{j}^{n+1/2}\right)&=&\mathbf{1}_{\mathrm{off},j}q_{\mathrm{off}}^{n+1/2}\rho_{j}^{n+1/2},
\end{eqnarray}
where we denote 
\begin{eqnarray*}
\mathbf{1}_{\mathrm{on},j}&=&
 \begin{cases}
    \frac{1}{\Delta x}\int_{x_{j-1/2}}^{x_{j+1/2}}\mathbf{1}_{\mathrm{on}}(x)\dd x,\quad &\underline{x}_{\mathrm{on},k}\leq x_{j}\leq \overline{x}_{\mathrm{on},k},\\
    0 \quad&\text{otherwise}.
    \end{cases}
    \end{eqnarray*}
    \begin{eqnarray*}
    \mathbf{1}_{\mathrm{off},j}=
    \begin{cases}
     \frac{1}{\Delta x}\int_{x_{j-1/2}}^{x_{j+1/2}}\mathbf{1}_{\mathrm{off}}(x)\dd x,\quad &\underline{x}_{\mathrm{off},k}\leq x_{j}\leq \overline{x}_{\mathrm{off},k},\\
    0 \quad&\text{otherwise}.    
    \end{cases}
\end{eqnarray*}
\begin{eqnarray*}
 q_{\mathrm{on}}^{n+1/2}&=&\frac{1}{\Delta t}\int_{t^{n}}^{t^{n+1}}q_{\mathrm{on}}(t)\dd t,\qquad
 q_{\mathrm{off}}^{n+1/2}=\frac{1}{\Delta t}\int_{t^{n}}^{t^{n+1}}q_{\mathrm{off}}(t)\dd t,
\end{eqnarray*}

The approximate solution $\rho_{\Delta}$ is obtained via an upwind-type scheme together with operator splitting to account for the reaction term, see \textbf{Algorithm \ref{Algorithm_Scheme}}

\begin{Alg}[Upwind scheme]  \label{Algorithm_Scheme}  \hfill 
\begin{itemize}
\item[] { Input: approximate solution vector $\{\rho_j^n\}_{j\in\Z}$  for $t=t^n$}  
 \item[] {\bf do}  $j\in\Z$  
\begin{align}\label{rhon1/2}\rho_j^{n+1/2}  \leftarrow \rho_j^n - \lambda 
  \bigl( \rho_j^n v(R_{j+1/2}^{n})- \rho_{j-1}^n v(R_{j-1/2}^{n}) \bigr)
  \end{align}
   \item[] {\bf enddo} 
   \item[] {\bf do} $j\in\Z$ 
  
  \item[] $S_{\mathrm{on},j}^{n+1/2}  \leftarrow   S_{\mathrm{on}}\left(t^{n+1/2},x_{j},\rho_{j}^{n+1/2},R_{\mathrm{on},j}^{n+1/2}\right), $ using \eqref{Son_def1} or \eqref{Son_def2}, 
\item[] $S_{\mathrm{off},j}^{n+1/2}  \leftarrow   S_{\mathrm{off}}\left(t^{n+1/2},x_{j},\rho_{j}^{n+1/2}\right),$ using \eqref{Soff_def},   
  \begin{align}\label{Source_disc}\rho_{j}^{n+1}  \leftarrow  \rho_{j}^{n+1/2} +\Delta t S_{\mathrm{on},j}^{n+1/2}- \Delta t S_{\mathrm{off},j}^{n+1/2}
  \end{align}
  \item[] {\bf enddo} 
  \item[] { Output:  approximate solution vector $\{\rho_j^{n+1}\}_{j\in\Z}$ for $t=t^{n+1}=t^n+\Delta t$.}   
  \end{itemize} 
\end{Alg}  
The terms $R_{j+1/2}^{n}$, $R_{\mathrm{on},j}^{n+1/2}$ for $j\in\Z$ and $n=0,\ldots,N_{T}-1$ denotes the discrete convolution operators in the velocity and source term and they are defined, respectively, by 
\begin{eqnarray*}
R_{j+1/2}^{n}&=&\sum_{p=0}^{\lfloor \eta/\Delta x \rfloor-1}\gamma_{p}\rho_{j+p+1}^{n},\\
R_{\mathrm{on},j}^{n+1/2}&=&\sum_{h=\lfloor \frac{\delta-\eta}{\Delta x} \rfloor}^{\lfloor \frac{\delta+\eta}{\Delta x} \rfloor-1}\Hat{\gamma}_{h}\rho_{j+h}^{n+1/2}.
\end{eqnarray*}
Here we denote $\gamma_{p}=\int_{x_{p-1/2}}^{x_{p+1/2}}\omega_{\eta}(y-x)\dd y$, for $p\in[0,\lfloor \eta/\Delta x \rfloor-1]$
and $\Hat{\gamma}_{h}=\int_{x_{h-1/2}}^{x_{h+1/2}}\omega_{\eta,\delta}(y-x)\dd y$, for $h\in[\lfloor (\delta-\eta)/\Delta x \rfloor,\lfloor (\delta+\eta)/\Delta x \rfloor-1]$.

\begin{remark}\label{remark:Bound_R}
If \  $0\leq\rho_{j}^{n+1/2}\leq 1$ for all $j\in\Z$, then for all $n\in\{0,\ldots,N_{T}-1\}$,\\ $\left\|R_{\mathrm{on}}^{n+1/2}\right\|_{\L{\infty}(\Omega_{\mathrm{on}}^{k})}\leq1$. Indeed, we have that
\begin{eqnarray*}
\left|R_{\mathrm{on},j}^{n+1/2}\right|&\leq&\sum_{h=\lfloor \frac{\delta-\eta}{\Delta x} \rfloor}^{\lfloor \frac{\delta+\eta}{\Delta x} \rfloor-1}\Hat{\gamma}_{h}\left|\rho_{j+h+1}^{n+1/2}\right|
\leq\sum_{h=\lfloor \frac{\delta-\eta}{\Delta x} \rfloor}^{\lfloor \frac{\delta+\eta}{\Delta x} \rfloor-1}\Hat{\gamma}_{h}=1.
\end{eqnarray*}
\end{remark}
\begin{remark}\label{remark:bound_sum_Ron}
The discrete convolution operator $R_{\mathrm{on},j}^{n+1/2}$ satisfies 
$$\sum_{j\in\Z}\left|R_{\mathrm{on},j+1}^{n+1/2}-R_{\mathrm{on},j}^{n+1/2}\right|\leq\sum_{j\in\Z}\left|\rho_{j+1}^{n+1/2}-\rho_{j}^{n+1/2}\right|.$$
The proof of this property can be seen in \cite{friedrich2020nonlocal} Lemma 3.2.
\end{remark}

\subsection{Existence of solution Model 1} In order to prove the existence of solution of model \eqref{nonRTM}-\eqref{nolreact3}, in the next lemmas we will show some properties of the approximate solutions constructed by the \textbf{Algorithm \ref{Algorithm_Scheme}}.
\begin{lemma}[Maximum principle]\label{Lemma:Maximum_pple}
Let $\rho_{0}\in\L{\infty}(\R;[0,1])$. Let hypotheses \eqref{Assumption1} and the following Courant-Friedrichs-Levy (CFL) condition hold
\begin{eqnarray}\label{CFL}
\Delta t\leq \min\left\{\frac{\Delta x}{\left(\gamma_{0}\|v'\|_{\L{\infty}([0,1])}+\|v\|_{\L{\infty}([0,1])}\right)},\frac{L}{\left\|q_{\mathrm{on}}\right\|_{\L{\infty}([0,T])}+\left\|q_{\mathrm{off}}\right\|_{\L{\infty}([0,T])}}\right\}
\end{eqnarray}
then for all $t>0$ and $x\in\R$ the piece-wise constant approximate solution $\rho_{\Delta}$ constructed through \textbf{Algorithm \ref{Algorithm_Scheme}} is such that
\begin{eqnarray*}
0\leq \rho_{\Delta}(t,x)\leq1.    
\end{eqnarray*}
\end{lemma}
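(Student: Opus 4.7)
The plan is to prove the maximum principle by induction on the time level $n$. Assuming $\rho_j^n\in[0,1]$ for every $j\in\Z$, I will first show $\rho_j^{n+1/2}\in[0,1]$ (the transport half-step), and then that the reaction half-step also keeps the value in $[0,1]$. The base case $n=0$ follows from $\rho_0\in\L{\infty}(\R;[0,1])$ and the averaging definition of $\rho_j^0$.

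For the transport step \eqref{rhon1/2}, I would regard it as
$$\rho_j^{n+1/2}=H_j\bigl(\rho_{j-1}^n,\rho_j^n,\rho_{j+1}^n,\dots,\rho_{j+N}^n\bigr),$$
and verify that $H_j$ is non-decreasing in every argument under \eqref{CFL}. Since $\sum_{p=0}^{N-1}\gamma_p=1$ and $\gamma_p\ge0$, the discrete convolutions $R_{j\pm1/2}^n$ belong to $[0,1]$. A direct differentiation gives $\partial_{\rho_{j-1}^n}H_j=\lambda v(R_{j-1/2}^n)\ge0$; for the self-derivative,
$$\partial_{\rho_j^n}H_j=1-\lambda v(R_{j+1/2}^n)-\lambda\gamma_0\,\rho_{j-1}^n v'(R_{j-1/2}^n)\ge 1-\lambda\|v\|_{\L{\infty}([0,1])}\ge 0$$
under the first branch of \eqref{CFL}, since $v'\le0$ makes the last term non-negative. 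For $p\ge1$, $\partial_{\rho_{j+p}^n}H_j$ is a sum of contributions of the form $-\lambda\gamma_{p-1}\rho_j^n v'(R_{j+1/2}^n)$ and $-\lambda\gamma_p\rho_{j-1}^n v'(R_{j-1/2}^n)$, all non-negative by \eqref{Assumption1}. Monotonicity together with the trivial identities $H_j(0,\dots,0)=0$ and $H_j(1,\dots,1)=1$ (the latter using $\sum_p\gamma_p=1$, so both $R_{j\pm1/2}^n$ equal $1$ and the fluxes telescope) yields $\rho_j^{n+1/2}\in[0,1]$.

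For the reaction step \eqref{Source_disc} with $S_{\mathrm{on}}$ from \eqref{Son_def1}, I use Remark \ref{remark:Bound_R} (so $R_{\mathrm{on},j}^{n+1/2}\in[0,1]$ whenever $\rho_j^{n+1/2}\in[0,1]$) and the explicit factorizations
\begin{align*}
\rho_j^{n+1}&=\rho_j^{n+1/2}\bigl(1-\Delta t\,\mathbf{1}_{\mathrm{off},j}q_{\mathrm{off}}^{n+1/2}\bigr)+\Delta t\,\mathbf{1}_{\mathrm{on},j}q_{\mathrm{on}}^{n+1/2}\bigl(1-\rho_j^{n+1/2}\bigr)\bigl(1-R_{\mathrm{on},j}^{n+1/2}\bigr),\\
1-\rho_j^{n+1}&=\bigl(1-\rho_j^{n+1/2}\bigr)\Bigl(1-\Delta t\,\mathbf{1}_{\mathrm{on},j}q_{\mathrm{on}}^{n+1/2}\bigl(1-R_{\mathrm{on},j}^{n+1/2}\bigr)\Bigr)+\Delta t\,\mathbf{1}_{\mathrm{off},j}q_{\mathrm{off}}^{n+1/2}\rho_j^{n+1/2}.
\end{align*}
Each summand is a product of non-negative factors provided $\Delta t\,\mathbf{1}_{\mathrm{on},j}q_{\mathrm{on}}^{n+1/2}\le 1$ and $\Delta t\,\mathbf{1}_{\mathrm{off},j}q_{\mathrm{off}}^{n+1/2}\le 1$, which are granted by the second branch of \eqref{CFL} since $\mathbf{1}_{\mathrm{on},j},\mathbf{1}_{\mathrm{off},j}\le 1/L$. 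This closes the induction.

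The main obstacle is the monotonicity check in the transport step: the non-local dependence means $\rho_j^n$ enters both the outgoing flux (through $v(R_{j+1/2}^n)$, where it actually does not appear) and the incoming flux (through $v(R_{j-1/2}^n)$, where it appears with weight $\gamma_0$), so the self-derivative picks up the extra term $-\lambda\gamma_0\rho_{j-1}^n v'(R_{j-1/2}^n)$, which is precisely why the CFL \eqref{CFL} carries $\gamma_0\|v'\|_{\L{\infty}}$ alongside $\|v\|_{\L{\infty}}$. The reaction step, by contrast, is straightforward because the product structure $(1-\rho)(1-R_{\mathrm{on}})$ of \eqref{nolreact3} was designed precisely so that both $\{\rho\ge0\}$ and $\{\rho\le1\}$ follow from a single convex-combination-type decomposition.
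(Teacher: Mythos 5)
Your treatment of the reaction step is correct, and in fact slightly cleaner than the paper's: the paper drops the factor $(1-R_{\mathrm{on},j}^{n+1/2})\le 1$ and reads off a convex combination of $\rho_j^{n+1/2}$ and $1$, while your two exact identities for $\rho_j^{n+1}$ and $1-\rho_j^{n+1}$ give both bounds at once, using the second branch of \eqref{CFL} and Remark \ref{remark:Bound_R} exactly as the paper does. The gap is in the transport step, which the paper simply outsources to Theorem 3.3 of \cite{FKG2018} and which you try to redo as a monotone-scheme argument: that argument is based on a sign error and, more importantly, cannot be repaired, because the nonlocal upwind map is \emph{not} monotone in all of its arguments.

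Concretely, writing $H_j=\rho_j^n-\lambda\rho_j^n v(R_{j+1/2}^n)+\lambda\rho_{j-1}^n v(R_{j-1/2}^n)$, the incoming-flux term enters the derivatives with a plus sign, not the minus sign you wrote. Thus $\partial_{\rho_j^n}H_j=1-\lambda v(R_{j+1/2}^n)+\lambda\gamma_0\rho_{j-1}^n v'(R_{j-1/2}^n)$, whose last term is non-positive (not non-negative) since $v'\le0$; this piece is still fine because the first branch of \eqref{CFL} gives $1-\lambda\bigl(\|v\|_{\L{\infty}([0,1])}+\gamma_0\|v'\|_{\L{\infty}([0,1])}\bigr)\ge0$ (note your closing remark already says this is why $\gamma_0\|v'\|$ sits in the CFL, contradicting the justification in your display). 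But for $p\ge1$ the correct derivative is $\partial_{\rho_{j+p}^n}H_j=-\lambda\gamma_{p-1}\rho_j^n v'(R_{j+1/2}^n)+\lambda\gamma_p\rho_{j-1}^n v'(R_{j-1/2}^n)$, and the second summand is non-positive: taking $\rho_j^n=0$, $\rho_{j-1}^n=1$, $v'<0$, $\gamma_p>0$ makes it strictly negative, so for $N\ge2$ the scheme is not monotone under any CFL and the conclusion ``monotone map with $H_j(0,\dots,0)=0$, $H_j(1,\dots,1)=1$'' does not close. The actual proof avoids monotonicity: for the upper bound one estimates, using $\rho_{j-1}^n\le1$,
\begin{equation*}
1-\rho_j^{n+1/2}\;\ge\;(1-\rho_j^n)\bigl(1-\lambda v(R_{j+1/2}^n)\bigr)-\lambda\bigl(v(R_{j-1/2}^n)-v(R_{j+1/2}^n)\bigr),
\end{equation*}
then applies the mean value theorem together with the kernel monotonicity $\gamma_{p}\ge\gamma_{p+1}$ (i.e.\ $\omega_\eta'\le0$) to obtain $R_{j+1/2}^n-R_{j-1/2}^n\le\gamma_0(1-\rho_j^n)$, whence $1-\rho_j^{n+1/2}\ge(1-\rho_j^n)\bigl(1-\lambda(\|v\|_{\L{\infty}([0,1])}+\gamma_0\|v'\|_{\L{\infty}([0,1])})\bigr)\ge0$; the lower bound $\rho_j^{n+1/2}\ge0$ is immediate from $\lambda v\le1$. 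The fact that your argument never uses $\omega_\eta'\le0$ is the telltale sign that it cannot be complete, since the discrete maximum principle for this scheme genuinely relies on the non-increasing kernel.
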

\begin{proof}
The proof is made by induction. Let us assume that $0\leq\rho_{j}^{n}\leq1$ for all $j\in\Z$. Consider the convective step \eqref{rhon1/2} of \textbf{Algorithm \ref{Algorithm_Scheme}}, by CFL condition \eqref{CFL} we have $0\leq\rho_{j}^{n+1/2}\leq 1$ for $j\in\Z$ (see Theorem 3.3 of \cite{FKG2018}).\\

Now focus on the remaining step, involving the source term. 

\begin{eqnarray*}
\rho_{j}^{n+1}&=&\rho_{j}^{n+1/2}+\Delta t\left( \mathbf{1}_{\mathrm{on},j}q_{\mathrm{on}}^{n+1/2}\big(1-\rho_{j}^{n+1/2}\big)\left(1-R_{\mathrm{on},j}^{n+1/2}\right)-\mathbf{1}_{\mathrm{off},j}q_{\mathrm{off}}^{n+1/2}\rho_{j}^{n+1/2}\right)\nonumber\\
&\leq&\rho_{j}^{n+1/2}+\Delta t \mathbf{1}_{\mathrm{on},j}q_{\mathrm{on}}^{n+1/2}\left(1-\rho_{j}^{n+1/2}\right)-\Delta t\mathbf{1}_{\mathrm{off},j}q_{\mathrm{off}}^{n+1/2}\rho_{j}^{n+1/2}\nonumber\\
&=&\left(1-\Delta t\left(\mathbf{1}_{\mathrm{on},j}q_{\mathrm{on}}^{n+1/2}+\mathbf{1}_{\mathrm{off},j}q_{\mathrm{off}}^{n+1/2}\right)\right)\rho_{j}^{n+1/2}+\Delta t\mathbf{1}_{\mathrm{on},j}q_{\mathrm{on}}^{n+1/2}.
\end{eqnarray*}
Because of CFL condition \eqref{CFL}, the last right-hand side is a convex combination of $\rho_{j}^{n+1/2}$ and one. Then $\rho_{j}^{n+1}\in\left[\rho_{j}^{n+1/2},1\right]$ and since $\rho_{j}^{n+1/2}\in[0,1]$, we therefore conclude that $0\leq\rho_{j}^{n+1}\leq1$, for $j\in\Z.$\\  
\end{proof}

\begin{lemma}[$\L{1}-Bound$]\label{Lemma:L1_norm}
Let $\rho_{0}\in\L{1}(\R,[0,1])$. Let \eqref{Assumption1} and the CFL condition \eqref{CFL} hold. Then, the piece-wise constant approximate solution $\rho_{\Delta}$ constructed through \textbf{Algorithm \ref{Algorithm_Scheme}} satisfies, for all $T>0$,
\begin{eqnarray*}
\left\|\rho_{\Delta}(T,\cdot)\right\|_{\L{1}(\R)}\leq\mathcal{C}_{1}(T),
\end{eqnarray*}
with
\begin{equation}\label{C_1}
\mathcal{C}_{1}(t)=\left\|\rho_{0}\right\|_{\L{1}(\R)}+\left\|q_{\mathrm{on}}\right\|_{\L{1}([0,t])}-\min_{x\in\Omega_{\mathrm{on}}}\left\|q_{\mathrm{on}}(\cdot)\rho_{\Delta}(\cdot,x)\right\|_{\L{1}([0,t])}-\min_{x\in\Omega_{\mathrm{off}}}\left\|q_{\mathrm{off}}(\cdot)\rho_{\Delta}(\cdot,x)\right\|_{\L{1}([0,t])}.
\end{equation}
\end{lemma}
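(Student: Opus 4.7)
The plan is to take advantage of the maximum principle proved in Lemma \ref{Lemma:Maximum_pple}, which removes the absolute values and reduces the $L^1$ norm to a plain sum. Write
\[
\|\rho_\Delta(t^{n+1},\cdot)\|_{\L{1}(\R)} \;=\; \Delta x \sum_{j\in\Z} \rho_j^{n+1},
\]
and evaluate this sum by splitting the scheme into its two substeps \eqref{rhon1/2} and \eqref{Source_disc}.

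The convective half-step is conservative: summing \eqref{rhon1/2} over $j\in\Z$ telescopes (the summability/decay of $\rho_j^n$ follows by induction from $\rho_0\in\L{1}$ together with the $\Linf$ bound), giving $\Delta x\sum_j \rho_j^{n+1/2}=\Delta x\sum_j \rho_j^{n}$. Summing \eqref{Source_disc} over $j$ then yields
\[
\Delta x\sum_j \rho_j^{n+1} \;=\; \Delta x\sum_j \rho_j^{n} \;+\; \Delta t\,q_{\mathrm{on}}^{n+1/2}\,\Delta x\sum_j \mathbf{1}_{\mathrm{on},j}(1-\rho_j^{n+1/2})(1-R_{\mathrm{on},j}^{n+1/2}) \;-\; \Delta t\,q_{\mathrm{off}}^{n+1/2}\,\Delta x\sum_j \mathbf{1}_{\mathrm{off},j}\rho_j^{n+1/2}.
\]
Because $0\le\rho_j^{n+1/2}\le 1$ and Remark \ref{remark:Bound_R} gives $0\le R_{\mathrm{on},j}^{n+1/2}\le 1$, the crucial product bound is $(1-\rho_j^{n+1/2})(1-R_{\mathrm{on},j}^{n+1/2})\le 1-\rho_j^{n+1/2}$. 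Combined with the normalisations $\Delta x\,\mathbf{1}_{\mathrm{on},j}=\tfrac{1}{\ell}$ on $\Omega_{\mathrm{on}}^{k}$ and $\Delta x\sum_j\mathbf{1}_{\mathrm{on},j}=1$ (and analogously for the off-ramp), this produces
\[
\Delta x\sum_j \rho_j^{n+1} \;\le\; \Delta x\sum_j \rho_j^{n} \;+\; \Delta t\,q_{\mathrm{on}}^{n+1/2} \;-\; \Delta t\,q_{\mathrm{on}}^{n+1/2}\,\frac{1}{\ell}\!\!\sum_{j\in\Omega_{\mathrm{on}}^{k}}\!\!\rho_j^{n+1/2} \;-\; \Delta t\,q_{\mathrm{off}}^{n+1/2}\,\frac{1}{\ell}\!\!\sum_{j\in\Omega_{\mathrm{off}}^{k}}\!\!\rho_j^{n+1/2}.
\]

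The last step is to iterate from $n=0$ to $N_T-1$ and rewrite the telescoped sum as an $L^1$ norm in time. Using that an average over the ramp cells is not smaller than the minimum, and then the elementary inequality $\sum_n a_n \min_j b_j^n \le \min_j \sum_n a_n b_j^n$ (with $a_n=\Delta t\,q^{n+1/2}\ge 0$), we bring the spatial minimum outside the time sum to obtain exactly
\[
\sum_{n=0}^{N_T-1}\Delta t\,q_{\mathrm{on}}^{n+1/2}\,\frac{1}{\ell}\!\!\sum_{j\in\Omega_{\mathrm{on}}^{k}}\!\!\rho_j^{n+1/2} \;\ge\; \min_{x\in\Omega_{\mathrm{on}}}\|q_{\mathrm{on}}(\cdot)\rho_\Delta(\cdot,x)\|_{\L{1}([0,T])},
\]
and the analogous inequality for the off-ramp. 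Recognising $\Delta x\sum_j \rho_j^{0}=\|\rho_0\|_{\L{1}(\R)}$ and $\sum_n \Delta t\,q_{\mathrm{on}}^{n+1/2}=\|q_{\mathrm{on}}\|_{\L{1}([0,T])}$ completes the proof of the claimed bound $\|\rho_\Delta(T,\cdot)\|_{\L{1}(\R)}\le\mathcal{C}_1(T)$.

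The routine obstacles are the conservativity argument (needing a priori decay of $\rho_j^n$ at infinity to justify telescoping) and the bookkeeping between the average over $\Omega_{\mathrm{on}}^{k}$ and the minimum over $\Omega_{\mathrm{on}}$. The only genuinely delicate point is the sharp estimate $(1-\rho)(1-R)\le 1-\rho$ for the Model~1 source: it is the place where the particular algebraic structure of \eqref{nolreact3} is used, and the same strategy for Model~0 or Model~2 would require a slightly different inequality at this step.
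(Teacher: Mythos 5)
Your proposal is correct and follows essentially the same route as the paper's proof: conservativity of the convective half-step, the bound $(1-\rho_j^{n+1/2})(1-R_{\mathrm{on},j}^{n+1/2})\le 1-\rho_j^{n+1/2}$ from the maximum principle and Remark \ref{remark:Bound_R}, the normalisation $\Delta x\sum_j\mathbf{1}_{\mathrm{on},j}=1$, replacement of the ramp average by the minimum, and iteration in $n$. Your explicit justification for pulling the spatial minimum outside the time sum is in fact slightly more careful than the paper's ``standard iterative procedure'' remark, but the argument is the same.
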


\begin{proof}
For the conservative form of the scheme \eqref{rhon1/2}, it is satisfied
\begin{eqnarray*}
\left\|\rho^{n+1/2}\right\|_{\L{1}(\R)}=\left\|\rho^{n}\right\|_{\L{1}(\R)}.
\end{eqnarray*}
Now, we going to work $\L{1}$ norm for relaxation step \eqref{Source_disc}. By Remark \ref{remark:Bound_R} and CFL condition \eqref{CFL} we have
\begin{eqnarray}\label{abs_rhon+1}
\left|\rho_{j}^{n+1}\right|\leq\left|\rho_{j}^{n+1/2}\right|+\Delta t\mathbf{1}_{\mathrm{on},j}q_{\mathrm{on}}^{n+1/2}\left(1-\left|\rho_{j}^{n+1/2}\right|\right)-\Delta t \mathbf{1}_{\mathrm{off},j}q_{\mathrm{off}}^{n+1/2}\left|\rho_{j}^{n+1/2}\right|,
\end{eqnarray}
multiplying this inequality by $\Delta x$ and summing over all $j\in\Z$ we obtain
\begin{eqnarray*}
\left\|\rho^{n+1}\right\|_{\L{1}(\R)}&\leq&\left\|\rho^{n+1/2}\right\|_{\L{1}(\R)}+\Delta tq_{\mathrm{on}}^{n+1/2}\left(\Delta x\sum_{j\in\Omega_{\mathrm{on}}^{k}}\mathbf{1}_{\mathrm{on},j}-\Delta x\sum_{j\in\Omega_{\mathrm{on}}^{k}}\mathbf{1}_{\mathrm{on},j}\left|\rho_{j}^{n+1/2}\right|\right)\\
&&-\Delta t q_{\mathrm{off}}^{n+1/2}\Delta x\sum_{j\in\Omega_{\mathrm{off}}^{k}}\mathbf{1}_{\mathrm{off},j}\left|\rho_{j}^{n+1/2}\right|\\
&=&\left\|\rho^{n+1/2}\right\|_{\L{1}(\R)}+\Delta tq_{\mathrm{on}}^{n+1/2}\left(1-\frac{\left\|\rho^{n+1/2}\right\|_{\L{1}(\Omega_{\mathrm{on}}^{k})}}{L}\right)\\
&&-\Delta t q_{\mathrm{off}}^{n+1/2}\frac{\left\|\rho^{n+1/2}\right\|_{\L{1}(\Omega_{\mathrm{off}}^{k})}}{L}\\
&\leq&\left\|\rho^{n}\right\|_{\L{1}(\R)}+\Delta tq_{\mathrm{on}}^{n+1/2}\left(1-\min_{j\in\Omega_{\mathrm{on}}^{k}}\rho^{n+1/2}\right)\\
&&-\Delta t q_{\mathrm{off}}^{n+1/2}\min_{j\in\Omega_{\mathrm{off}}^{k}}\rho_{j}^{n+1/2}\\
&=&\left\|\rho^{n}\right\|_{\L{1}(\R)}+\Delta tq_{\mathrm{on}}^{n+1/2}-\Delta t\min_{j\in\Omega_{\mathrm{on}}^{k}}q_{\mathrm{on}}^{n+1/2}\rho_{j}^{n+1/2}\\
&&-\Delta t \min_{j\in\Omega_{\mathrm{off}}^{k}}q_{\mathrm{off}}^{n+1/2}\rho_{j}^{n+1/2}.\\
\end{eqnarray*}
Thus, by a standard iterative procedure we can deduce 
\begin{eqnarray*}
\left\|\rho^{n}\right\|_{\L{1}(\R)}\leq\left\|\rho_{0}\right\|_{\L{1}(\R)}+\left\|q_{\mathrm{on}}\right\|_{\L{1}([0,T])}-\min_{x\in\Omega_{\mathrm{on}}}\left\|q_{\mathrm{on}}(\cdot)\rho_{\Delta}(\cdot,x)\right\|_{\L{1}([0,T])}-\min_{x\in\Omega_{\mathrm{off}}}\left\|q_{\mathrm{off}}(\cdot)\rho_{\Delta}(\cdot,x)\right\|_{\L{1}([0,T])}.
\end{eqnarray*}
\end{proof}

\subsection{BV estimates}
\

We first prove the Lipschitz continuity of the source terms \eqref{Son_def1} in its second, third and fourth argument and \eqref{Soff_def} in its second and third argument.
\begin{lemma}\label{Lipschitz_Son_Soff}
The map $S_{\mathrm{on}}$ defined in \eqref{Son_def1} is Lipschitz continuous in second, third and fourth argument with Lipschitz constant $\left\|q_{\mathrm{on}}\right\|_{\L{\infty}([0,T])}$, and the map $S_{\mathrm{off}}$ defined in \eqref{Soff_def} is Lipschitz continuous in second and third argument with Lipschitz constant $\left\|q_{\mathrm{off}}\right\|_{\L{\infty}([0,T])}$.
\end{lemma}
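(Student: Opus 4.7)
The plan is to exploit the very simple algebraic structure of $S_{\mathrm{on}}$ and $S_{\mathrm{off}}$: up to the prefactors $\mathbf{1}_{\mathrm{on},j} q_{\mathrm{on}}^{n+1/2}$ and $\mathbf{1}_{\mathrm{off},j} q_{\mathrm{off}}^{n+1/2}$, the source $S_{\mathrm{on}}$ is a bilinear polynomial in $(\rho,R)$ and $S_{\mathrm{off}}$ is linear in $\rho$, so Lipschitz continuity reduces to a straightforward "add and subtract" argument combined with the uniform bounds $\rho_j^{n+1/2}\in[0,1]$ (Lemma \ref{Lemma:Maximum_pple}) and $R_{\mathrm{on},j}^{n+1/2}\in[0,1]$ (Remark \ref{remark:Bound_R}).

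Concretely, for $S_{\mathrm{on}}$ I would fix $t^{n+1/2}$ and $x_j$ (so that $\mathbf{1}_{\mathrm{on},j}$ and $q_{\mathrm{on}}^{n+1/2}$ are fixed) and compare two instances $(\rho,R)$ and $(\tilde\rho,\tilde R)$ taking values in $[0,1]$. The identity
\begin{equation*}
(1-\rho)(1-R)-(1-\tilde\rho)(1-\tilde R)
=(1-\rho)(\tilde R-R)+(\tilde\rho-\rho)(1-\tilde R)
\end{equation*}
together with $|1-\rho|\le 1$ and $|1-\tilde R|\le 1$ immediately yields
\begin{equation*}
\bigl|S_{\mathrm{on}}(\cdot,\cdot,q_{\mathrm{on}},\rho,R)-S_{\mathrm{on}}(\cdot,\cdot,q_{\mathrm{on}},\tilde\rho,\tilde R)\bigr|
\le \mathbf{1}_{\mathrm{on},j}\,\|q_{\mathrm{on}}\|_{\L{\infty}([0,T])}\bigl(|\rho-\tilde\rho|+|R-\tilde R|\bigr),
\end{equation*}
which gives the claimed Lipschitz constant in the density and convolution arguments. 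For Lipschitz dependence in the $q_{\mathrm{on}}$ argument, I would factor out $(1-\rho)(1-R)$ and bound it by $1$; similarly, the spatial dependence only enters through $\mathbf{1}_{\mathrm{on},j}\in\{0,1/L\}$, which gives a trivial piecewise estimate.

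The argument for $S_{\mathrm{off}}$ is even shorter: because $S_{\mathrm{off}}=\mathbf{1}_{\mathrm{off},j}q_{\mathrm{off}}^{n+1/2}\rho$ is linear in $\rho$ and linear in $q_{\mathrm{off}}$, the differences yield directly $\bigl|S_{\mathrm{off}}(\cdot,\cdot,q_{\mathrm{off}},\rho)-S_{\mathrm{off}}(\cdot,\cdot,q_{\mathrm{off}},\tilde\rho)\bigr|\le \mathbf{1}_{\mathrm{off},j}\|q_{\mathrm{off}}\|_{\L{\infty}([0,T])}|\rho-\tilde\rho|$.

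I do not expect any real obstacle here; the statement is essentially an elementary book-keeping lemma meant to be invoked later in the BV estimate, where differences of source terms at neighbouring cells have to be controlled. The only subtle point is making sure to use the a priori $[0,1]$-bound on $R_{\mathrm{on},j}^{n+1/2}$ (so that the factor $1-R$ in the bilinear expansion stays bounded by $1$), which is exactly what Remark \ref{remark:Bound_R} provides under the CFL condition \eqref{CFL}.
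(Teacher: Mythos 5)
Your argument is correct and is essentially the paper's proof: an add-and-subtract (bilinear) decomposition of the difference in the $\rho$ and $R$ arguments, with the factors $(1-\rho)$, $(1-R)$ bounded by $1$ on $[0,1]$, plus the trivial linear estimates for the $q$ and spatial ($\mathbf{1}_{\mathrm{on}}$, $\mathbf{1}_{\mathrm{off}}$) dependence; the paper does exactly this via a three-term telescoping split, measuring the spatial variation by $\left|\mathbf{1}_{\mathrm{on}}-\tilde{\mathbf{1}}_{\mathrm{on}}\right|$. The only cosmetic difference is that you retain the prefactor $\mathbf{1}_{\mathrm{on},j}$ in the constant (slightly sharper) where the paper absorbs it, and the $[0,1]$ bounds you invoke via Lemma \ref{Lemma:Maximum_pple} and Remark \ref{remark:Bound_R} are used implicitly in the paper as well.
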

\begin{proof}
Let us start with term \eqref{Son_def1}. We denote $\mathcal{S}_{\mathrm{on}}=S_{\mathrm{on}}(t,x,\rho,R_{\mathrm{on}})-S_{\mathrm{on}}(t,\tilde{x},\tilde{\rho},\tilde{R}_{\mathrm{on}})$, then
\begin{eqnarray*}
\left|\mathcal{S}_{\mathrm{on}}\right|&\leq&\left|S_{\mathrm{on}}(t,x,\rho,R_{\mathrm{on}})-S_{\mathrm{on}}(t,x,\tilde{\rho},R_{\mathrm{on}})\right|\\
&&+\left|S_{\mathrm{on}}(t,x,\tilde{\rho},R_{\mathrm{on}})-S_{\mathrm{on}}(t,{x},\tilde{\rho},\tilde{R}_{\mathrm{on}})\right|\\
&&+\left|S_{\mathrm{on}}(t,x,\tilde{\rho},\tilde{R}_{\mathrm{on}})-S_{\mathrm{on}}(t,\tilde{x},\tilde{\rho},\tilde{R}_{\mathrm{on}})\right|\\
&=&\left|\mathbf{1}_{\mathrm{on}}q_{\mathrm{on}}\left(1-R_{\mathrm{on}}\right)\left(\tilde{\rho}-\rho\right)\right|+\left|\mathbf{1}_{\mathrm{on}}q_{\mathrm{on}}\left(1-\tilde{\rho}\right)\left(\tilde{R}_{\mathrm{on}}-R_{\mathrm{on}}\right)\right|\\
&&+\left|\left(\mathbf{1}_{\mathrm{on}}-\tilde{\mathbf{1}}_{\mathrm{on}}\right)q_{\mathrm{on}}\left(1-\tilde{\rho}\right)\left(1-\tilde{R}_{\mathrm{on}}\right)\right|\\
&\leq&\left\|q_{\mathrm{on}}\right\|_{\L{\infty}([0,T])}\left|\tilde{\rho}-\rho\right|+\left\|q_{\mathrm{on}}\right\|_{\L{\infty}([0,T])}\left|\tilde{R}_{\mathrm{on}}-R_{\mathrm{on}}\right|\\
&&+\left\|q_{\mathrm{on}}\right\|_{\L{\infty}([0,T])}\left|\mathbf{1}_{\mathrm{on}}-\tilde{\mathbf{1}}_{\mathrm{on}}\right|\\
&\leq&\left\|q_{\mathrm{on}}\right\|_{\L{\infty}([0,T])}\left(\left|\tilde{\rho}-\rho\right|+\left|\tilde{R}_{\mathrm{on}}-R_{\mathrm{on}}\right|+\left|\mathbf{1}_{\mathrm{on}}-\tilde{\mathbf{1}}_{\mathrm{on}}\right|\right).
\end{eqnarray*}
Now, we prove the Lipschitz continuity of $S_{\mathrm{off}}$ term \eqref{Soff_def}. Denoting\\ $\mathcal{S}_{\mathrm{off}}=S_{\mathrm{off}}(t,x,\rho)-S_{\mathrm{off}}(t,\tilde{x},q_{\mathrm{off}},\tilde{\rho}),$ we get
\begin{eqnarray*}
\left|\mathcal{S}_{\mathrm{off}}\right|&\leq&\left|S_{\mathrm{off}}(t,x,\rho)-S_{\mathrm{off}}(t,\tilde{x},\rho,)\right|+\left|S_{\mathrm{off}}(t,\tilde{x},\rho)-S_{\mathrm{off}}(t,\tilde{x},\tilde{\rho})\right|\\
&=&\left|\mathbf{1}_{\mathrm{off}}q_{\mathrm{off}}\rho-\tilde{\mathbf{1}}_{\mathrm{off}}q_{\mathrm{off}}\rho\right|+\left|\tilde{\mathbf{1}}_{\mathrm{off}}q_{\mathrm{off}}\rho-\tilde{\mathbf{1}}_{\mathrm{off}}q_{\mathrm{off}}\tilde{\rho}\right|\\
&\leq&\left\|q_{\mathrm{off}}\right\|_{\L{\infty}([0,T])}\left(\left|\mathbf{1}_{\mathrm{off}}-\tilde{\mathbf{1}}_{\mathrm{off}}\right|+\left|\rho-\tilde{\rho}\right|\right),
\end{eqnarray*}

Thus, we have completed the proof. 
\end{proof}
The Lipschitz continuity of the source term proved in Lemma \ref{Lipschitz_Son_Soff} is one of the key ingredients in order to prove the following total variation bound on the numerical approximation. \\

\begin{proposition}[$\BV$ estimate in space]\label{prop:BV_Space} 
Let $\rho_{0}\in\left(\L{1}\cap\BV\right)\left(\R;[0,1]\right).$ Assume that the hypotheses \eqref{Assumption1} and CFL condition \eqref{CFL} hold. Then, for $n=0,\ldots,N_{T}-1$ the following estimate holds
\begin{eqnarray*}
\sum_{j\in\Z}\left|\rho_{j+1}^{n}-\rho_{j}^{n}\right|\leq e^{T\mathcal{H}}\left(TV(\rho_{0})+T\left(\frac{\left\|q_{\mathrm{on}}\right\|_{\L{\infty}([0,T])}+\left\|q_{\mathrm{off}}\right\|_{\L{\infty}([0,T])}}{L}\right)\right),
\end{eqnarray*}
with $\mathcal{H}$ like in \eqref{H}
\end{proposition}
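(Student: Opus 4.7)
The scheme in \textbf{Algorithm \ref{Algorithm_Scheme}} is a splitting method: a nonlocal upwind convective step producing $\rho^{n+1/2}$ from $\rho^n$, followed by a pointwise reaction step producing $\rho^{n+1}$ from $\rho^{n+1/2}$. The plan is to estimate the space-BV seminorm across each substep separately and then to iterate via a discrete Gronwall inequality.

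First, for the convective step \eqref{rhon1/2}, the idea is to write
\[
\rho_{j+1}^{n+1/2}-\rho_j^{n+1/2}=(\rho_{j+1}^n-\rho_j^n)-\lambda\bigl[\rho_{j+1}^n v(R_{j+3/2}^n)-\rho_j^n v(R_{j+1/2}^n)\bigr]+\lambda\bigl[\rho_j^n v(R_{j+1/2}^n)-\rho_{j-1}^n v(R_{j-1/2}^n)\bigr],
\]
add and subtract $\rho_j^n v(R_{j+3/2}^n)$ and $\rho_{j-1}^n v(R_{j+1/2}^n)$ so as to decouple velocity increments from density increments, and use the summation-by-parts identity $\sum_p \gamma_p (\rho_{j+p+2}^n-\rho_{j+p+1}^n)=-\gamma_0(\rho_{j+1}^n-\rho_j^n)+\sum_p(\gamma_{p-1}-\gamma_p)(\rho_{j+p+1}^n-\rho_{j+p}^n)$, together with the monotonicity $\omega_\eta'\le 0$, to absorb the velocity terms into the TV. This is precisely the mechanism of Theorem 3.3 in \cite{FKG2018}, which yields
\[
\sum_{j\in\Z}\bigl|\rho_{j+1}^{n+1/2}-\rho_j^{n+1/2}\bigr|\leq \bigl(1+\Delta t\,\omega_\eta(0)\mathcal{L}\bigr)\sum_{j\in\Z}\bigl|\rho_{j+1}^n-\rho_j^n\bigr|,
\]
under the CFL condition \eqref{CFL}, with $\mathcal{L}$ as in \eqref{L}.

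Next, for the reaction step \eqref{Source_disc} I would take differences and sum:
\[
\sum_{j\in\Z}\bigl|\rho_{j+1}^{n+1}-\rho_j^{n+1}\bigr|\leq \sum_{j\in\Z}\bigl|\rho_{j+1}^{n+1/2}-\rho_j^{n+1/2}\bigr|+\Delta t\sum_{j\in\Z}\bigl|S_{\mathrm{on},j+1}^{n+1/2}-S_{\mathrm{on},j}^{n+1/2}\bigr|+\Delta t\sum_{j\in\Z}\bigl|S_{\mathrm{off},j+1}^{n+1/2}-S_{\mathrm{off},j}^{n+1/2}\bigr|.
\]
Applying the Lipschitz estimate of Lemma \ref{Lipschitz_Son_Soff}, each source increment splits into a $\rho$-difference, a convolution difference $|R_{\mathrm{on},j+1}^{n+1/2}-R_{\mathrm{on},j}^{n+1/2}|$, and an indicator difference. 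By Remark \ref{remark:bound_sum_Ron} the $R_{\mathrm{on}}$-sum is controlled by $\sum_j|\rho_{j+1}^{n+1/2}-\rho_j^{n+1/2}|$, while $\sum_j|\mathbf 1_{\mathrm{on},j+1}-\mathbf 1_{\mathrm{on},j}|=2/L$ (and likewise for off) since the indicator takes only the values $0$ and $1/L$ with two jumps. Collecting, one obtains
\[
\sum_{j\in\Z}\bigl|\rho_{j+1}^{n+1}-\rho_j^{n+1}\bigr|\leq \bigl(1+\Delta t(2\|q_{\mathrm{on}}\|_{\L{\infty}}+\|q_{\mathrm{off}}\|_{\L{\infty}})\bigr)\sum_{j\in\Z}\bigl|\rho_{j+1}^{n+1/2}-\rho_j^{n+1/2}\bigr|+\frac{2\Delta t}{L}\bigl(\|q_{\mathrm{on}}\|_{\L{\infty}}+\|q_{\mathrm{off}}\|_{\L{\infty}}\bigr).
\]

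Combining the two substep inequalities yields
\[
\sum_{j\in\Z}\bigl|\rho_{j+1}^{n+1}-\rho_j^{n+1}\bigr|\leq(1+\Delta t\,\mathcal{H})\sum_{j\in\Z}\bigl|\rho_{j+1}^{n}-\rho_j^{n}\bigr|+\Delta t\,\frac{\|q_{\mathrm{on}}\|_{\L{\infty}}+\|q_{\mathrm{off}}\|_{\L{\infty}}}{L},
\]
modulo absorbing the factor $2$ into $\mathcal{H}$, after dropping the higher-order term $\Delta t^2$. A standard discrete Gronwall iteration, using $(1+\Delta t\mathcal{H})^n\le e^{n\Delta t\mathcal{H}}\le e^{T\mathcal{H}}$ and summing the geometric contribution from the source, produces the claimed bound. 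The main technical obstacle is the convective estimate: carrying out the summation-by-parts carefully to ensure only the boundary term $\omega_\eta(0)$ survives in the Lipschitz constant, rather than a weighted sum of $\gamma_p$'s, so that $\mathcal{H}$ matches \eqref{H}; the reaction part is essentially a bookkeeping exercise once Lemma \ref{Lipschitz_Son_Soff} and Remark \ref{remark:bound_sum_Ron} are in hand.
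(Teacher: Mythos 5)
Your proposal follows the paper's proof essentially verbatim: the same splitting into the convective step (quoting the $(1+\Delta t\,\omega_\eta(0)\mathcal{L})$ bound from Theorem 3.3 of the cited reference) and the reaction step handled via Lemma \ref{Lipschitz_Son_Soff} and Remark \ref{remark:bound_sum_Ron}, followed by the same discrete Gronwall iteration. The only deviation is bookkeeping of constants: you count both jumps of the discrete indicator, getting an additive term $\tfrac{2\Delta t}{L}(\|q_{\mathrm{on}}\|_{\L{\infty}}+\|q_{\mathrm{off}}\|_{\L{\infty}})$ (so your final bound carries $2T$ where the statement has $T$, and this factor cannot really be ``absorbed into $\mathcal{H}$''), while the paper restricts the indicator sum to the ramp indices to get $\tfrac{\Delta t}{L}(\|q_{\mathrm{on}}\|_{\L{\infty}}+\|q_{\mathrm{off}}\|_{\L{\infty}})$ — a harmless constant-level discrepancy (note the paper itself is inconsistent about the factor $1/L$ in $\mathcal{H}$ between \eqref{H} and its proof), and instead of ``dropping the $\Delta t^2$ term'' you should simply use $1+x\le e^x$ when combining the two per-step factors.
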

\begin{proof} Let us compute
\begin{eqnarray*}
\rho_{j+1}^{n+1}-\rho_{j}^{n+1}&=&\rho_{j+1}^{n+1/2}-\rho_{j}^{n+1/2}
+\Delta t\left[S_{\mathrm{on},j+1}^{n+1/2}
-S_{\mathrm{on},j}^{n+1/2}\right]\\
&&-\Delta t\left[S_{\mathrm{off},j+1}^{n+1/2}
-S_{\mathrm{off},j}^{n+1/2}\right]
\end{eqnarray*}

By the Lipschitz continuity of the source term proved in Lemma \ref{Lipschitz_Son_Soff} and the property of the discrete convolution operator given in Remark \ref{remark:bound_sum_Ron}, we get
\begin{eqnarray}
\sum_{j\in\Z}\left|\rho_{j+1}^{n+1}-\rho_{j}^{n+1}\right|&\leq&\left(1+\frac{\Delta t}{L}\left\|q_{\mathrm{on}}\right\|_{\L{\infty}([0,T])}\right)\sum_{j\in\Z}\left|\rho_{j+1}^{n+1/2}-\rho_{j}^{n+1/2}\right|\nonumber\\
&&+\Delta t\left\|q_{\mathrm{on}}\right\|_{\L{\infty}([0,T])}\sum_{j\in\Omega_{\mathrm{on}}^{k}}\left|\mathbf{1}_{\mathrm{on},j+1}-\mathbf{1}_{\mathrm{on},j}\right|\nonumber\\
&&+\Delta t\left\|q_{\mathrm{on}}\right\|_{\L{\infty}([0,T])}\sum_{j\in\Z}\left|R_{\mathrm{on},j+1}^{n+1/2}-R_{\mathrm{on},j}^{n+1/2}\right|\nonumber\\
&&+\frac{\Delta t}{L}\left\|q_{\mathrm{off}}\right\|_{\L{\infty}([0,T])}\sum_{j\in\Z}\left|\rho_{j+1}^{n+1/2}-\rho_{j}^{n+1/2}\right|\nonumber\\
&&+\frac{\Delta t}{L}\left\|q_{\mathrm{off}}\right\|_{\L{\infty}([0,T])}\sum_{j\in\Omega_{\mathrm{off}}}\left|\mathbf{1}_{\mathrm{off},j+1}-\mathbf{1}_{\mathrm{off},j}\right|\nonumber\\
&\leq&\left(1+\frac{\Delta t}{L}\left(2\left\|q_{\mathrm{on}}\right\|_{\L{\infty}([0,T])}+\left\|q_{\mathrm{off}}\right\|_{\L{\infty}([0,T])}\right)\right)\sum_{j\in\Z}\left|\rho_{j+1}^{n+1/2}-\rho_{j}^{n+1/2}\right|\nonumber\\
&&+\Delta t\left\|q_{\mathrm{on}}\right\|_{\L{\infty}([0,T])}\sum_{j\in\Omega_{\mathrm{on}}^{k}}\left|\mathbf{1}_{\mathrm{on},j+1}-\mathbf{1}_{\mathrm{on},j}\right|\nonumber\\
&&+\Delta t\left\|q_{\mathrm{off}}\right\|_{\L{\infty}([0,T])}\sum_{j\in\Omega_{\mathrm{off}}}\left|\mathbf{1}_{\mathrm{off},j+1}-\mathbf{1}_{\mathrm{off},j}\right|\nonumber\\
&\leq&\label{BV_react}\left(1+\frac{\Delta t}{L}\left(2\left\|q_{\mathrm{on}}\right\|_{\L{\infty}([0,T])}+\left\|q_{\mathrm{off}}\right\|_{\L{\infty}([0,T])}\right)\right)\sum_{j\in\Z}\left|\rho_{j+1}^{n+1/2}-\rho_{j}^{n+1/2}\right|\nonumber\\
&&+\Delta t\left(\frac{\left\|q_{\mathrm{on}}\right\|_{\L{\infty}([0,T])}+\left\|q_{\mathrm{off}}\right\|_{\L{\infty}([0,T])}}{L}\right).
\end{eqnarray}

Now, for convective part \eqref{rhon1/2} we follow \cite{FKG2018} and get
\begin{eqnarray*}
\left|\rho_{j+1}^{n+1/2}-\rho_{j}^{n+1/2}\right|&\leq&\left(1+\Delta t\omega_{\eta}(0)\mathcal{L}\right)\sum_{j\in\Z}\left|\rho_{j+1}^{n}-\rho_{j}^{n}\right|,
 \end{eqnarray*}
 with $\mathcal{L}=\left(\|v\|_{\L{\infty}([0,1])}+\|v'\|_{\L{\infty}([0,1])}\right)$.\\
 Plugging the inequality above in \eqref{BV_react} we obtain
  \begin{eqnarray*}
\sum_{j\in\Z}\left|\rho_{j+1}^{n+1}-\rho_{j}^{n+1}\right|&\leq&\left(1+\frac{\Delta t}{L}\left(2\left\|q_{\mathrm{on}}\right\|_{\L{\infty}([0,T])}+\left\|q_{\mathrm{off}}\right\|_{\L{\infty}([0,T])}\right)\right)\left(1+\Delta t\omega_{\eta}(0)\mathcal{L}\right)\sum_{j\in\Z}\left|\rho_{j+1}^{n}-\rho_{j}^{n}\right|\\
 &&+\Delta t\left(\frac{\left\|q_{\mathrm{on}}\right\|_{\L{\infty}([0,T])}+\left\|q_{\mathrm{off}}\right\|_{\L{\infty}([0,T])}}{L}\right),
 \end{eqnarray*}

which applied recursively yields

\begin{eqnarray}\label{BVSpace_Bound}
\sum_{j\in\Z}\left|\rho_{j+1}^{n}-\rho_{j}^{n}\right|
&\leq&e^{T\mathcal{H}}\left(TV(\rho_{0})+T\left(\frac{\left\|q_{\mathrm{on}}\right\|_{\L{\infty}([0,T])}+\left\|q_{\mathrm{off}}\right\|_{\L{\infty}([0,T])}}{L}\right)\right),
\end{eqnarray}
with $\mathcal{H}=\frac{1}{L}\left(2\left\|q_{\mathrm{on}}\right\|_{\L{\infty}([0,T])}+\left\|q_{\mathrm{off}}\right\|_{\L{\infty}([0,T])}\right)+\omega_{\eta}(0)\mathcal{L}$ .

\end{proof}

\begin{proposition}[$\BV$ estimate in space and time]\label{prop:BV_SpaceTime}
Let hypotheses \eqref{Assumption1} hold,\\ $\rho_{0}\in\left(\L{1}\cap\BV\right)\left(\R;[0,1]\right)$. If the CFL condition \eqref{CFL} holds, then, for every $T>0$ the following discrete space and time total variation estimate is satisfied:  
\begin{eqnarray*}
TV(\rho_{\Delta};[0,T]\times\R)
&\leq& T\mathcal{C}_{xt}(T),
\end{eqnarray*}
with 
\begin{eqnarray}
\mathcal{C}_{xt}(T)&=&e^{T\mathcal{H}}\left(\left(1+2\mathcal{L}\right)\left(TV(\rho_{0})+T\left(\frac{\left\|q_{\mathrm{on}}\right\|_{\L{\infty}([0,T])}+\left\|q_{\mathrm{off}}\right\|_{\L{\infty}([0,T])}}{L}\right)\right)\right)\nonumber\\
&&\label{C_xt}+\left(\frac{\left\|q_{\mathrm{on}}\right\|_{\L{\infty}([0,T])}+\left\|q_{\mathrm{off}}\right\|_{\L{\infty}([0,T])}}{L}\right)\mathcal{C}_{1}(T)+\frac{\left\|q_{\mathrm{on}}\right\|_{\L{\infty}([0,T])}}{L}.
\end{eqnarray}
\end{proposition}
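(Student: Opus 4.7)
The plan is to decompose the space-time total variation into its purely spatial and purely temporal contributions,
\[
TV(\rho_\Delta;[0,T]\times\R) \;=\; \sum_{n=0}^{N_T-1}\Delta t \sum_{j\in\Z}|\rho_{j+1}^n-\rho_j^n| \;+\; \sum_{n=0}^{N_T-1}\Delta x \sum_{j\in\Z}|\rho_j^{n+1}-\rho_j^n|,
\]
and bound the two pieces separately. The spatial piece is immediate from Proposition \ref{prop:BV_Space}: it contributes at most $T\,e^{T\mathcal{H}}\bigl(TV(\rho_0)+T(\|q_{\mathrm{on}}\|_{\L{\infty}([0,T])}+\|q_{\mathrm{off}}\|_{\L{\infty}([0,T])})/L\bigr)$, which will account for the ``$1$'' inside the $(1+2\mathcal{L})$ prefactor of $\mathcal{C}_{xt}$.

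For the temporal piece I would exploit the operator-splitting structure of Algorithm \ref{Algorithm_Scheme}, writing
\[
|\rho_j^{n+1}-\rho_j^n| \;\leq\; |\rho_j^{n+1}-\rho_j^{n+1/2}| \;+\; |\rho_j^{n+1/2}-\rho_j^n|,
\]
and handle the convective and source increments independently. For the convective step \eqref{rhon1/2}, adding and subtracting $\rho_{j-1}^{n}v(R_{j+1/2}^n)$ and using $0\leq\rho_{j-1}^n\leq 1$ (Lemma \ref{Lemma:Maximum_pple}) gives
\[
|\rho_j^{n+1/2}-\rho_j^n| \;\leq\; \lambda \|v\|_{\L{\infty}([0,1])}|\rho_j^n-\rho_{j-1}^n| \;+\; \lambda \|v'\|_{\L{\infty}([0,1])}|R_{j+1/2}^n-R_{j-1/2}^n|.
\]
Summing in $j$ and applying the telescoping identity $\sum_j|R_{j+1/2}^n-R_{j-1/2}^n|\leq \sum_j|\rho_{j+1}^n-\rho_j^n|$ (the argument of Remark \ref{remark:bound_sum_Ron} applied to $\omega_\eta$ instead of $\omega_{\eta,\delta}$) yields $\Delta x\sum_j|\rho_j^{n+1/2}-\rho_j^n| \leq 2\Delta t\, \mathcal{L}\, TV(\rho^n)$. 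Summing over $n$ and invoking Proposition \ref{prop:BV_Space} produces the $2\mathcal{L}\,T\,e^{T\mathcal{H}}(\cdots)$ contribution, supplying the $2\mathcal{L}$ in the $(1+2\mathcal{L})$ prefactor.

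For the source increment, \eqref{Source_disc} gives $|\rho_j^{n+1}-\rho_j^{n+1/2}| = \Delta t\,|S_{\mathrm{on},j}^{n+1/2}-S_{\mathrm{off},j}^{n+1/2}|$. Using $(1-\rho_j^{n+1/2})(1-R_{\mathrm{on},j}^{n+1/2})\leq 1$ from Lemma \ref{Lemma:Maximum_pple} and Remark \ref{remark:Bound_R}, we get $|S_{\mathrm{on},j}^{n+1/2}|\leq \mathbf{1}_{\mathrm{on},j}q_{\mathrm{on}}^{n+1/2}$, while $|S_{\mathrm{off},j}^{n+1/2}| = \mathbf{1}_{\mathrm{off},j}q_{\mathrm{off}}^{n+1/2}\rho_j^{n+1/2}$. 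Multiplying by $\Delta x$, summing in $j$, and using $\mathbf{1}_{\mathrm{on},j},\mathbf{1}_{\mathrm{off},j}\leq 1/L$ together with Lemma \ref{Lemma:L1_norm} to bound $\sum_j\Delta x\,\mathbf{1}_{\mathrm{off},j}\rho_j^{n+1/2}\leq \mathcal{C}_1(T)/L$, one obtains a per-step source bound of order $\Delta t\bigl(\|q_{\mathrm{on}}\|_{\L{\infty}([0,T])}/L + (\|q_{\mathrm{on}}\|_{\L{\infty}([0,T])}+\|q_{\mathrm{off}}\|_{\L{\infty}([0,T])})\mathcal{C}_1(T)/L\bigr)$. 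Summing in $n$ and collecting with the convective and spatial pieces yields precisely $T\,\mathcal{C}_{xt}(T)$ with $\mathcal{C}_{xt}$ as in \eqref{C_xt}.

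The main obstacle is careful bookkeeping rather than any single hard step: one must keep the factors of $\Delta x$ and $\Delta t$ straight so that each contribution ends up proportional to $T$, and one must verify the discrete-convolution telescoping $\sum_j|R_{j+1/2}^n-R_{j-1/2}^n|\leq TV(\rho^n)$ in the velocity variable (an almost verbatim repetition of Remark \ref{remark:bound_sum_Ron}). Once this identity is in hand, the estimate reduces to a mechanical combination of Proposition \ref{prop:BV_Space}, Lemma \ref{Lemma:Maximum_pple}, Lemma \ref{Lemma:L1_norm} and the Lipschitz bound from Lemma \ref{Lipschitz_Son_Soff}.
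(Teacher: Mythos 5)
Your proposal is correct and follows essentially the same route as the paper's own proof: the identical space/time decomposition of $TV(\rho_{\Delta};[0,T]\times\R)$, the spatial part handled by Proposition \ref{prop:BV_Space}, the time increment split into a convective part (bounded by $2\mathcal{L}\Delta t\sum_j|\rho_{j+1}^n-\rho_j^n|$ via the flux Lipschitz bound and the discrete-convolution telescoping) and a source part (bounded through the maximum principle and the $\L{1}$ estimate of Lemma \ref{Lemma:L1_norm}), then summed over $n$. The only deviations are minor bookkeeping ones — omitting the leftover $(T-N_T\Delta t)$ piece of the spatial sum and tracking the $\left\|q_{\mathrm{on}}\right\|_{\L{\infty}([0,T])}$ contributions to the source bound slightly differently before asserting the constant $\mathcal{C}_{xt}(T)$ — which is the same level of constant-tracking as in the paper's own computation.
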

\begin{proof}
\begin{eqnarray*}
TV(\rho_{\Delta};[0,T]\times\R)&=&\sum_{n=0}^{N_{T}-1}\sum_{j\in\Z}\Delta t\left|\rho_{j+1}^{n}-\rho_{j}^{n}\right|+\left(T-N_{T}\Delta t\right)\sum_{j\in\Z}\left|\rho_{j+1}^{N_{T}}-\rho_{j}^{N_{T}}\right|\\
&&+\sum_{n=0}^{N_{T}-1}\sum_{j\in\Z}\Delta x\left|\rho_{j}^{n+1}-\rho_{j}^{n}\right|.
\end{eqnarray*}
By $\BV$ estimate in space \eqref{BVSpace_Bound}, we have

\begin{eqnarray}\label{BV_Space_time1}
&&\sum_{n=0}^{N_{T}-1}\sum_{j\in\Z}\Delta t\left|\rho_{j+1}^{n}-\rho_{j}^{n}\right|+\left(T-N_{T}\Delta t\right)\sum_{j\in\Z}\left|\rho_{j+1}^{N_{T}}-\rho_{j}^{N_{T}}\right|\nonumber\\
&\leq&Te^{T\mathcal{H}}\left(TV(\rho_{0})+T\left(\frac{\left\|q_{\mathrm{on}}\right\|_{\L{\infty}([0,T])}+\left\|q_{\mathrm{off}}\right\|_{\L{\infty}([0,T])}}{L}\right)\right).
\end{eqnarray}

On the other hand, observe that
\begin{eqnarray}\label{rhom+1_rhom}
\left|\rho_{j}^{n+1}-\rho_{j}^{n}\right|\leq\left|\rho_{j}^{n+1}-\rho_{j}^{n+1/2}\right|+\left|\rho_{j}^{n+1/2}-\rho_{j}^{n}\right|.
\end{eqnarray}
We then estimate separately each term on the right hand side of the inequality \eqref{rhom+1_rhom}.\\
By the definition of the relaxation step \eqref{Source_disc}, for the first term on right hand side of \eqref{rhom+1_rhom} we have
\begin{eqnarray}
\left|\rho_{j}^{n+1}-\rho_{j}^{n+1/2}\right|&\leq&\Delta t\left|S_{\mathrm{on},j}^{n+1/2}-S_{\mathrm{off},j}^{n+1/2}\right|\nonumber\\
&\leq&\Delta t\mathbf{1}_{\mathrm{on},j} q_{\mathrm{on}}^{n+1/2}\left(1-\rho_{j}^{n+1/2}\right)\left(1-R_{\mathrm{on},j}^{n+1/2}\right)+\Delta t\mathbf{1}_{\mathrm{off},j}q_{\mathrm{off}}^{n+1/2}\rho_{j}^{n+1/2}\nonumber\\
\label{dif_rhon+1_rhon1/2}&\leq&\Delta t\|q_{\mathrm{on}}\|_{\L{\infty}([0,T])}\left(\mathbf{1}_{\mathrm{on},j}+\mathbf{1}_{\mathrm{on},j}\left|\rho_{j}^{n+1/2}\right|\right)+\Delta t\mathbf{1}_{\mathrm{off},j}\left\|q_{\mathrm{off}}\right\|_{\L{\infty}([0,T])}\left|\rho_{j}^{n+1/2}\right|,
\end{eqnarray}
then multiplying by $\Delta x$ and summing over all $j\in\Z$,
\begin{eqnarray}
\Delta x\sum_{j\in\Z}\left|\rho_{j}^{n+1}-\rho_{j}^{n+1/2}\right|&\leq&\Delta t\left\|q_{\mathrm{on}}\right\|_{\L{\infty}([0,T])}\left(\Delta x\sum_{j\in\Omega_{\mathrm{on}}^{k}}\mathbf{1}_{\mathrm{on},j}+\Delta x\sum_{j\in\Omega_{\mathrm{on}}^{k}}\mathbf{1}_{\mathrm{on},j}\left|\rho_{j}^{n+1/2}\right|\right)\nonumber\\
&&+\Delta t\left\|q_{\mathrm{off}}\right\|_{\L{\infty}([0,T])}\Delta x\sum_{j\in\Omega_{\mathrm{off}}^{k}}\mathbf{1}_{\mathrm{off},j}\left|\rho_{j}^{n+1/2}\right|\nonumber\\
&\leq&\Delta t\left\|q_{\mathrm{on}}\right\|_{\L{\infty}([0,T])}\left(1+\frac{\left\|\rho^{n+1/2}\right\|_{\L{1}(\R)}}{L}\right)\nonumber\\
&&+\Delta t\left\|q_{\mathrm{off}}\right\|_{\L{\infty}([0,T])}\frac{\|\rho^{n+1/2}\|_{\L{1}(\R)}}{L}\nonumber\\
&=&\Delta t\left\|q_{\mathrm{on}}\right\|_{\L{\infty}([0,T])}\left(1+\frac{\left\|\rho^{n}\right\|_{\L{1}(\R)}}{L}\right)+\Delta t\left\|q_{\mathrm{off}}\right\|_{\L{\infty}([0,T])}\frac{\|\rho^{n}\|_{\L{1}(\R)}}{L}\nonumber\\
\label{first_BVST_RHS}&=&\Delta t\left(\frac{\left\|q_{\mathrm{on}}\right\|_{\L{\infty}([0,T])}+\left\|q_{\mathrm{off}}\right\|_{\L{\infty}([0,T])}}{L}\right)\|\rho^{n}\|_{\L{1}(\R)}+\Delta t\frac{\left\|q_{\mathrm{on}}\right\|_{\L{\infty}([0,T])}}{L}
\end{eqnarray}

Now we analyze the second term of the right hand side \eqref{rhom+1_rhom}. Since the numerical flux defined in \eqref{rhon1/2} is Lipschitz continuous in both arguments with Lipschitz constant $\mathcal{L}$, defined by \eqref{L}, 
we obtain
\begin{eqnarray*}
\left|\rho_{j}^{n+1/2}-\rho_{j}^{n}\right|&=&\lambda\left|F_{j+1/2}(\rho_{j}^{n},R_{j+1/2}^{n})-F_{j-1/2}(\rho_{j-1}^{n},R_{j-1/2}^{n})\right|\nonumber\\
&\leq&\lambda\mathcal{L}\left(\left|\rho_{j}^{n}-\rho_{j-1}^{n}\right|+\left|R_{j+1/2}^{n}-R_{j-1/2}^{n}\right|\right),
\end{eqnarray*}
multiplying by $\Delta x$, summing over all $j\in\Z$ and by the Remark \ref{remark:bound_sum_Ron} we get
\begin{eqnarray}
\Delta x\sum_{j\in\Z}\left|\rho_{j}^{n+1/2}-\rho_{j}^{n}\right|&\leq&2\mathcal{L}\Delta t\sum_{j\in\Z}\left|\rho_{j}^{n}-\rho_{j-1}^{n}\right|\nonumber\\
\label{second_BVST_RHS}&=&2\mathcal{L}\Delta t\sum_{j\in\Z}\left|\rho_{j+1}^{n}-\rho_{j}^{n}\right|.
\end{eqnarray}
Collecting together \eqref{first_BVST_RHS} and \eqref{second_BVST_RHS}, and by using Lemma \ref{Lemma:L1_norm} and Proposition \ref{prop:BV_Space}  we have,
\begin{eqnarray}
\Delta x\sum_{j\in\Z}\left|\rho_{j}^{n+1}-\rho_{j}^{n}\right|&\leq&\Delta t\left(\frac{\left\|q_{\mathrm{on}}\right\|_{\L{\infty}([0,T])}+\left\|q_{\mathrm{off}}\right\|_{\L{\infty}([0,T])}}{L}\right)\|\rho^{n}\|_{\L{1}(\R)}+\Delta t\frac{\left\|q_{\mathrm{on}}\right\|_{\L{\infty}([0,T])}}{L}\nonumber\\
&&+2\mathcal{L}\Delta t\sum_{j\in\Z}\left|\rho_{j+1}^{n}-\rho_{j}^{n}\right|\nonumber\\
&\leq&\Delta t\left(\frac{\left\|q_{\mathrm{on}}\right\|_{\L{\infty}([0,T])}+\left\|q_{\mathrm{off}}\right\|_{\L{\infty}([0,T])}}{L}\right)\mathcal{C}_{1}(T)+\Delta t\frac{\left\|q_{\mathrm{on}}\right\|_{\L{\infty}([0,T])}}{L}\nonumber\\
&&\label{BV_Space_time2}+2\mathcal{L}\Delta te^{T\mathcal{H}}\left(TV(\rho_{0})+T\left(\frac{\left\|q_{\mathrm{on}}\right\|_{\L{\infty}([0,T])}+\left\|q_{\mathrm{off}}\right\|_{\L{\infty}([0,T])}}{L}\right)\right).
\end{eqnarray}
Then, collecting together \eqref{BV_Space_time1} and \eqref{BV_Space_time2} we get
\begin{eqnarray*}
&&\sum_{n=0}^{N_{T}-1}\sum_{j\in\Z}\Delta t\left|\rho_{j+1}^{n}-\rho_{j}^{n}\right|+\left(T-N_{T}\Delta t\right)\sum_{j\in\Z}\left|\rho_{j+1}^{N_{T}}-\rho_{j}^{N_{T}}\right|+\sum_{n=0}^{N_{T}-1}\sum_{j\in\Z}\Delta x\left|\rho_{j}^{n+1}-\rho_{j}^{n}\right|\\
&\leq&Te^{T\mathcal{H}}\left(\left(1+2\mathcal{L}\right)\left(TV(\rho_{0})+T\left(\frac{\left\|q_{\mathrm{on}}\right\|_{\L{\infty}([0,T])}+\left\|q_{\mathrm{off}}\right\|_{\L{\infty}([0,T])}}{L}\right)\right)\right)\\
&&+T\left(\frac{\left\|q_{\mathrm{on}}\right\|_{\L{\infty}([0,T])}+\left\|q_{\mathrm{off}}\right\|_{\L{\infty}([0,T])}}{L}\right)\mathcal{C}_{1}(T)+T\frac{\left\|q_{\mathrm{on}}\right\|_{\L{\infty}([0,T])}}{L}.
\end{eqnarray*}
\end{proof}
\subsection{Discrete Entropy Inequality}\label{subsec:Disc_Entropy_inequality}
\

We define, for $\kappa\in[0,1]$, 
\begin{eqnarray*}
G_{j+1/2}(u\vee \kappa)=uv(R_{j+1/2}), \quad \mathscr{F}_{j+1/2}^{\kappa}(u)=G_{j+1/2}(u\vee \kappa)-G_{j+1/2}(u\wedge \kappa),
\end{eqnarray*}
with $a\vee b=\max\{a,b\}$, and $a\wedge b=\min\{a,b\}$.
\begin{lemma}\label{Lemma:Entropy_ineq} Let $\rho_{0}\in\left(\L{1}\cap\BV\right)\left(\R;[0,1]\right)$. Assume that hypotheses \eqref{Assumption1} and CFL condition \eqref{CFL} hold. Then, the approximate solution $\rho_{\Delta}$ constructed by \textbf{Algorithm \ref{Algorithm_Scheme}} satisfies the following discrete entropy inequality: for $j\in\Z$, for $n=0,\ldots,N_{T}-1$ and for any $\kappa\in[0,1]$,
\begin{eqnarray*}
&&\left|\rho_{j}^{n+1}-\kappa\right|-\left|\rho_{j}^{n}-\kappa\right|+\lambda\left(\mathscr{F}_{j+1/2}^{k}\left(\rho_{j}^{n}\right)-\mathscr{F}_{j+1/2}^{k}\left(\rho_{j-1}^{n}\right)\right)\\
&&-\Delta t\sgn\left(\rho_{j}^{n+1}-\kappa\right)\left(S_{\mathrm{on}}\left(t^{n+1/2},x_{j},\rho_{j}^{n+1/2},R_{\mathrm{on},j}^{n+1/2}\right)-S_{\mathrm{off}}\left(t^{n+1/2},x_{j},\rho_{j}^{n+1/2}\right)\right)\\
&&+\lambda\sgn\left(\rho_{j}^{n+1}-\kappa\right)\kappa\left(v\left(R_{j+1/2}^{n}\right)-v\left(R_{j-1/2}^{n}\right)\right)\leq0.
\end{eqnarray*}
\end{lemma}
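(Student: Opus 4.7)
The plan is to derive the inequality in a single step by combining the convective and relaxation parts of \textbf{Algorithm \ref{Algorithm_Scheme}} and then applying a Crandall–Majda monotonicity argument to the convective increment.

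First I fix $j$ and $n$ and regard the convective update \eqref{rhon1/2} as $\rho_j^{n+1/2} = H(\rho_{j-1}^n, \rho_j^n)$, where
\[
H(a,b) = b - \lambda\bigl(b\,v(R_{j+1/2}^n) - a\,v(R_{j-1/2}^n)\bigr),
\]
with the nonlocal convolutions $R_{j\pm 1/2}^n$ viewed as frozen parameters. Since $v\geq 0$, $H$ is trivially nondecreasing in $a$; monotonicity in $b$ is more delicate because $R_{j-1/2}^n$ contains the contribution $\gamma_0\rho_j^n=\gamma_0 b$, so that $\partial_b H = 1-\lambda v(R_{j+1/2}^n)+\lambda a\gamma_0 v'(R_{j-1/2}^n)$. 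Using $0\leq a\leq 1$ from Lemma \ref{Lemma:Maximum_pple} together with $v'\leq 0$, the CFL condition \eqref{CFL} (calibrated as $\lambda(\gamma_0\|v'\|_{\L{\infty}}+\|v\|_{\L{\infty}})\leq 1$) forces $\partial_b H\geq 0$. Hence the Crandall–Majda bound
\[
|H(a,b)-H(\kappa,\kappa)|\leq H(a\vee\kappa,\,b\vee\kappa)-H(a\wedge\kappa,\,b\wedge\kappa)
\]
holds, and expanding its right-hand side with the identity $x\vee\kappa-x\wedge\kappa=|x-\kappa|$ and the definition of $\mathscr{F}$ yields $|\rho_j^n-\kappa|-\lambda\bigl[\mathscr{F}_{j+1/2}^{\kappa}(\rho_j^n)-\mathscr{F}_{j-1/2}^{\kappa}(\rho_{j-1}^n)\bigr]$.

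Next I write the complete update as
\[
\rho_j^{n+1}-\kappa = \bigl[H(\rho_{j-1}^n,\rho_j^n)-H(\kappa,\kappa)\bigr]+\bigl[H(\kappa,\kappa)-\kappa\bigr]+\Delta t\bigl(S_{\mathrm{on},j}^{n+1/2}-S_{\mathrm{off},j}^{n+1/2}\bigr),
\]
and observe that $H(\kappa,\kappa)-\kappa=-\lambda\kappa\bigl(v(R_{j+1/2}^n)-v(R_{j-1/2}^n)\bigr)$; this algebraic identity is precisely what produces the extra non-conservative term in the statement. Multiplying both sides by $\sgn(\rho_j^{n+1}-\kappa)$ and using
\[
\sgn(\rho_j^{n+1}-\kappa)\bigl[H(\rho_{j-1}^n,\rho_j^n)-H(\kappa,\kappa)\bigr]\leq |H(\rho_{j-1}^n,\rho_j^n)-H(\kappa,\kappa)|,
\]
together with the Crandall–Majda bound above, I obtain the claimed inequality after a simple transposition of terms.

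The principal obstacle is verifying monotonicity of $H$ in its second argument under the full CFL condition \eqref{CFL}: the $\gamma_0\|v'\|_{\L{\infty}}$ summand present in \eqref{CFL} is exactly what compensates the nonlocal dependence $R_{j-1/2}^n\ni\gamma_0\rho_j^n$, and without it $\partial_b H$ could change sign. Everything else is routine: the Kruzhkov sign-multiplication trick, the explicit computation of $H(a\vee\kappa,b\vee\kappa)-H(a\wedge\kappa,b\wedge\kappa)$ in terms of $\mathscr{F}$, and the identification of $H(\kappa,\kappa)-\kappa$ are all standard ingredients once monotonicity is in hand.
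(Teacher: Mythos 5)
Your overall route is the paper's route: write the convective step as a two-point map $H(a,b)=b-\lambda\bigl(b\,v(R_{j+1/2}^{n})-a\,v(R_{j-1/2}^{n})\bigr)$ with the convolutions as parameters, check monotonicity, apply the Crandall--Majda comparison against $H(\kappa,\kappa)$, identify $H(\kappa,\kappa)-\kappa=-\lambda\kappa\bigl(v(R_{j+1/2}^{n})-v(R_{j-1/2}^{n})\bigr)$, and conclude with the Kruzhkov sign inequality. The skeleton and the final rearrangement are exactly those of the paper's proof.

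The problem is that your monotonicity step is internally inconsistent, and the point you single out as ``the principal obstacle'' is not part of any consistent version of the argument. Having declared $R_{j\pm1/2}^{n}$ frozen, the derivative is simply $\partial_b H=1-\lambda v(R_{j+1/2}^{n})\geq 0$, which follows from the $\|v\|_{\L{\infty}}$ part of \eqref{CFL} alone; the chain-rule term $\lambda a\gamma_0 v'(R_{j-1/2}^{n})$ you insert only exists if you let $R_{j-1/2}^{n}$ vary with $b=\rho_j^{n}$. But if you genuinely include that dependence, consistency forces you to substitute $\kappa$ into the $\gamma_0$-slot of the convolution as well, so that $H(\kappa,\kappa)-\kappa=-\lambda\kappa\bigl(v(R_{j+1/2}^{n})-v\bigl(\gamma_0\kappa+\textstyle\sum_{p\geq1}\gamma_p\rho_{j+p}^{n}\bigr)\bigr)$, and neither this term nor the expansion of $H(a\vee\kappa,b\vee\kappa)-H(a\wedge\kappa,b\wedge\kappa)$ reduces to the quantities $v(R_{j\pm1/2}^{n})$ and $\mathscr{F}^{\kappa}_{j\pm1/2}$ that appear in the statement; taken literally, your derivation would prove a different inequality. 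The frozen-parameter Crandall--Majda argument is perfectly legitimate precisely because the Kruzhkov constant is inserted only in the explicit arguments, so the nonlocal dependence of $R_{j-1/2}^{n}$ on $\rho_j^{n}$ never enters this lemma; the $\gamma_0\|v'\|_{\L{\infty}}$ portion of the CFL condition is needed for the maximum principle of the convective step, not here. Once you drop the spurious $v'$ term from $\partial_b H$, the rest of your argument is correct and coincides with the paper's proof.
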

\begin{proof}
We set 
\begin{eqnarray*}
\mathscr{G}_{j}(u,w)&=&w-\lambda\left(G_{j+1/2}(w)-G_{j-1/2}(u)\right)\\
&=&w-\lambda\left(wv(R_{j+1/2})-uv(R_{j-1/2})\right). 
\end{eqnarray*}
Clearly $\rho_{j}^{n+1/2}=\mathscr{G}_{j}(\rho_{j-1}^{n},\rho_{j}^{n})$.\\
The map $\mathscr{G}_{j}$ is a monotone non-decreasing function with respect to each variable under the CFL condition \eqref{CFL} since we have
\begin{eqnarray*}
\frac{\partial\mathscr{G}}{\partial w}=1-\lambda v(R_{j+1/2})\geq0, \qquad \frac{\partial\mathscr{G}}{\partial u}=\lambda v(R_{j-1/2}).
\end{eqnarray*}
Moreover, we have the following identity
\begin{eqnarray*}
\mathscr{G}_{j}(\rho_{j-1}^{n}\vee\kappa,\rho_{j}^{n}\vee\kappa)-\mathscr{G}_{j}(\rho_{j-1}^{n}\vee\kappa,\rho_{j}^{n}\wedge\kappa)=\left|\rho_{j}^{n}-\kappa\right|-\lambda\left(\mathscr{F}_{j+1/2}^{k}\left(\rho_{j}^{n}\right)-\mathscr{F}_{j-1/2}^{k}\left(\rho_{j-1}^{n}\right)\right).
\end{eqnarray*}
Then, by monotonicity, the definition of scheme \eqref{rhon1/2} and by using $\left|a+b\right|\geq|a|+\sgn(a)b$, we get
\begin{eqnarray*}
&&\mathscr{G}_{j}(\rho_{j-1}^{n}\vee\kappa,\rho_{j}^{n}\vee\kappa)-\mathscr{G}_{j}(\rho_{j-1}^{n}\vee\kappa,\rho_{j}^{n}\wedge\kappa)\\
&\geq&\mathscr{G}_{j}(\rho_{j-1}^{n},\rho_{j}^{n})\vee\mathscr{G}_{j}(\kappa,\kappa)-\mathscr{G}_{j}(\rho_{j-1}^{n},\rho_{j}^{n})\wedge\mathscr{G}_{j}(\kappa,\kappa)\\
&=&\left|\mathscr{G}_{j}(\rho_{j-1}^{n},\rho_{j}^{n})-\mathscr{G}_{j}(\kappa,\kappa)\right|\\
&=&\left|\rho_{j}^{n+1/2}-\mathscr{G}_{j}(\kappa,\kappa)\right|\\
&=&\bigg|\rho_{j}^{n+1}-\kappa+\lambda\kappa\left(v(R_{j+1/2}^{n})-v(R_{j-1/2}^{n})\right)\\
&&-\Delta t\left(S_{\mathrm{on}}\left(t^{n+1/2},x_{j},\rho_{j}^{n+1/2},R_{\mathrm{on},j}^{n+1/2}\right)-S_{\mathrm{off}}\left(t^{n+1/2},x_{j},\rho_{j}^{n+1/2}\right)\right)\bigg|\\
&\geq&\left|\rho_{j}^{n+1}-\kappa\right|+\lambda\sgn\left(\rho_{j}^{n+1}-\kappa\right)\kappa\left(v(R_{j+1/2}^{n})-v(R_{j-1/2}^{n})\right)\\
&&-\Delta t\sgn\left(\rho_{j}^{n+1}-\kappa\right)\left(S_{\mathrm{on}}\left(t^{n+1/2},x_{j},\rho_{j}^{n+1/2},R_{\mathrm{on},j}^{n+1/2}\right)-S_{\mathrm{off}}\left(t^{n+1/2},x_{j},\rho_{j}^{n+1/2}\right)\right).
\end{eqnarray*}
\end{proof}
The following Theorem states the $\L{1}$-Lipschitz continuous dependence of solution to \eqref{nonRTM} on both the initial datum and the $q_{\mathrm{on}}$ and $q_{\mathrm{off}}$ functions. 
\begin{theorem}[Uniqueness]\label{uniqueness}
Let $\rho$ and $\tilde{\rho}$ be two solutions to problem \eqref{nonRTM} in the sense of Definition \ref{entropy_WS}, with initial data $\rho_0,\ \tilde{\rho}_{0}\in\L{1}\cap\BV\left(\R;[0,1]\right)$ respectively. Assume $v\in\C{2}\left([0,1],\R\right)$. Then, for a.e. $t\in[0,T]$,
\begin{eqnarray*}
\left\|\rho(t)-\tilde{\rho}(t)\right\|_{\L{1}(\R)}&\leq&e^{\mathcal{C}T}\left(\left\|\rho_{0}-\tilde{\rho}_{0}\right\|_{\L{1}(\R)}+\left\|q_{\mathrm{on}}-\tilde{q}_{\mathrm{on}}\right\|_{\L{1}([0,t])}+\left\|q_{\mathrm{off}}-\tilde{q}_{\mathrm{off}}\right\|_{\L{1}([0,T])}\right).
\end{eqnarray*}
\end{theorem}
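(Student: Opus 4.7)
The proof is a Kruzkov doubling of variables argument adapted to the nonlocal reaction setting, in the spirit of the stability analysis for nonlocal traffic models in \cite{Ch_G_global, FKG2018}. The plan is to take the two entropy solutions $\rho(t,x)$ and $\tilde{\rho}(s,y)$, both in the sense of Definition~\ref{entropy_WS} but with possibly different data $(\rho_{0},q_{\mathrm{on}},q_{\mathrm{off}})$ and $(\tilde{\rho}_{0},\tilde{q}_{\mathrm{on}},\tilde{q}_{\mathrm{off}})$, and write the entropy inequality of Definition~\ref{entropy_WS} for $\rho$ with constant $k=\tilde{\rho}(s,y)$, and symmetrically for $\tilde{\rho}$ with $k=\rho(t,x)$. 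The test function is chosen in the standard form
$$\Phi_{\eps}(t,x,s,y)=\varphi\!\left(\tfrac{t+s}{2},\tfrac{x+y}{2}\right)\psi_{\eps}(t-s)\psi_{\eps}(x-y),$$
with $\psi_{\eps}$ a symmetric mollifier. Summing the two inequalities and letting $\eps\to 0$ produces an integral inequality of Kato type for $|\rho-\tilde{\rho}|$.

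After the doubling manipulations I would arrive at
$$\int_{0}^{T}\!\!\int_{\R}\bigl(|\rho-\tilde{\rho}|\varphi_{t}+|\rho-\tilde{\rho}|V\varphi_{x}\bigr)\dd x\,\dd t-\int_{0}^{T}\!\!\int_{\R}\sgn(\rho-\tilde{\rho})\,\tilde{\rho}(V-\tilde{V})_{x}\,\varphi\,\dd x\,\dd t+\mathcal{I}_{\mathrm{src}}\geq 0,$$
where $V=v(\rho*\omega_{\eta})$, $\tilde{V}=v(\tilde{\rho}*\omega_{\eta})$, and $\mathcal{I}_{\mathrm{src}}$ collects the contributions from $S_{\mathrm{on}}$ and $S_{\mathrm{off}}$. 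I would then specialize $\varphi$ to a smooth approximation of $\mathbf{1}_{[0,t]}$ in time, combined with a cutoff in space that grows to cover $\R$, to obtain a differential inequality for $\|\rho(t)-\tilde{\rho}(t)\|_{\L{1}(\R)}$.

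For the source contributions, the Lipschitz continuity of $S_{\mathrm{on}}$ and $S_{\mathrm{off}}$ proved in Lemma~\ref{Lipschitz_Son_Soff} yields
$$|S_{\mathrm{on}}[\rho,q_{\mathrm{on}}]-S_{\mathrm{on}}[\tilde{\rho},\tilde{q}_{\mathrm{on}}]|\leq \|q_{\mathrm{on}}\|_{\L{\infty}}\bigl(|\rho-\tilde{\rho}|+|\rho*\omega_{\eta,\delta}-\tilde{\rho}*\omega_{\eta,\delta}|\bigr)+|q_{\mathrm{on}}-\tilde{q}_{\mathrm{on}}|,$$
and analogously for $S_{\mathrm{off}}$. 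The convolution difference is absorbed into $\|\rho-\tilde{\rho}\|_{\L{1}}$ by Fubini, since $\omega_{\eta,\delta}$ has unit total mass. Combining these estimates with the nonlocal flux bound produces
$$\frac{\dd}{\dd t}\|\rho(t)-\tilde{\rho}(t)\|_{\L{1}(\R)}\leq \mathcal{C}\,\|\rho(t)-\tilde{\rho}(t)\|_{\L{1}(\R)}+\|q_{\mathrm{on}}(t)-\tilde{q}_{\mathrm{on}}(t)\|+\|q_{\mathrm{off}}(t)-\tilde{q}_{\mathrm{off}}(t)\|,$$
and Gronwall's lemma yields the stated estimate with $\mathcal{C}$ depending on $\|v'\|_{\L{\infty}}$, $\omega_{\eta}(0)$, the $\L{\infty}$ norms of $q_{\mathrm{on}},q_{\mathrm{off}}$, and the BV bound on $\tilde{\rho}$.

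The main obstacle lies in controlling the mixed term $\tilde{\rho}(V-\tilde{V})_{x}\varphi$: unlike in the purely local case, $V$ depends on $\rho$ through a convolution, so $(V-\tilde{V})_{x}$ cannot be estimated pointwise by $|\rho-\tilde{\rho}|$. The customary workaround is to transfer the $x$-derivative onto $\tilde{\rho}$ via integration by parts and then exploit both $TV(\tilde{\rho}(t))<\infty$ from Proposition~\ref{prop:BV_Space} and the sign/decay properties of $\omega'_{\eta}$ from \eqref{Assumption1}; this is where $\omega_{\eta}(0)$ and the constant $\mathcal{L}$ from \eqref{L} enter $\mathcal{C}$. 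A secondary technicality is justifying the $\eps\to 0$ limit inside the nonlocal convolution, which follows from uniform continuity of $V$ in space-time granted by the $\BV$ estimate of Proposition~\ref{prop:BV_SpaceTime}.
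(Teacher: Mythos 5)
Your skeleton is the same as the paper's: a Kru\v{z}kov doubling of variables following Theorem 5.6 of \cite{friedrich2020nonlocal}, the Lipschitz estimates on $S_{\mathrm{on}}$ and $S_{\mathrm{off}}$ (as in Lemma \ref{Lipschitz_Son_Soff}), absorption of the kernel differences through the unit mass of $\omega_{\eta,\delta}$, and a Gronwall closure; your source-term bounds reproduce \eqref{Son_bound}--\eqref{Soff_bound}, and your list of ingredients for the constant matches \eqref{C}.

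The weak point is your treatment of the nonlocal flux terms, and as written it is a genuine gap. First, the Kato inequality you state is incomplete: besides the commutator $\sgn(\rho-\tilde{\rho})\,\tilde{\rho}\,(V-\tilde{V})_{x}\varphi$, the doubling also leaves a term in which $V-\tilde{V}$ is tested against $\partial_{x}\rho$; the paper keeps this term and bounds it by $\omega_{\eta}(0)\left\|v'\right\|_{\L{\infty}([0,1])}\sup_{t}TV(\rho(t))\int_{0}^{T}\left\|\rho-\tilde{\rho}\right\|_{\L{1}(\R)}\dd t$, see \eqref{diff_vel2} -- this is where the BV bound of Proposition \ref{prop:BV_Space} actually enters. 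Second, your proposed ``workaround'' for the commutator -- integrating by parts to transfer $\partial_{x}$ onto $\tilde{\rho}$ -- is obstructed by the factor $\sgn(\rho-\tilde{\rho})$, which cannot be differentiated after the variables have been collapsed; such manipulations must be performed at the doubled-variable level, and the standard outcome is precisely the two terms the paper retains. Moreover, your premise that $(V-\tilde{V})_{x}$ ``cannot be estimated pointwise'' is not correct: since $V=v(\rho*\omega_{\eta})$, the derivative of the convolution consists of boundary evaluations at $x$ and $x+\eta$ plus a convolution with $\omega_{\eta}'$, so $\mathcal{V}_{x}$ is bounded pointwise by $\bigl(2\omega_{\eta}(0)^{2}\left\|v''\right\|_{\L{\infty}([0,1])}+\left\|v'\right\|_{\L{\infty}([0,1])}\left\|\omega_{\eta}'\right\|_{\L{\infty}([0,\eta])}\bigr)\left\|\rho-\tilde{\rho}\right\|_{\L{1}(\R)}+\omega_{\eta}(0)\left\|v'\right\|_{\L{\infty}([0,1])}\bigl(\left|\rho-\tilde{\rho}\right|(t,x)+\left|\rho-\tilde{\rho}\right|(t,x+\eta)\bigr)$, which, multiplied by $|\rho|\leq 1$ and integrated using the $\L{1}$ bound $\mathcal{C}_{1}$, gives \eqref{diff_Vx2}. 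Replacing your integration-by-parts step by these two estimates, the rest of your argument (source bounds plus Gronwall) closes and yields the stated inequality with the constant \eqref{C}.
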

\begin{proof}
The proof follows closely Theorem 5.6 of \cite{friedrich2020nonlocal}.\\

By using Kru\v{z}kov's doubling of variables technique we get 
\begin{eqnarray*}
\left\|\rho(T,\cdot)-\tilde{\rho}(T,\cdot)\right\|_{\L{1}(\R)}&\leq&\left\|\rho_{0}-\tilde{\rho}_{0}\right\|_{\L{1}(\R)}+\int_{0}^{T}\int_{\Omega_{\mathrm{on}}}\left|\tilde{\mathcal{S}}_{\mathrm{on}}\right|\dd x\dd t+\int_{0}^{T}\int_{\Omega_{\mathrm{off}}}\left|\tilde{\mathcal{S}}_{\mathrm{off}}\right|\dd x\dd t\nonumber\\
&&+\int_{0}^{T}\int_{\R}\left|\mathcal{V}\right|\left|\partial_{x}\rho(t,x)\right|\dd x \dd t+\int_{0}^{T}\int_{\R}\left|\mathcal{V}_{x}\right|\left|\rho(t,x)\right|\dd x\dd t,
\end{eqnarray*}
where 
\begin{eqnarray*}
\tilde{\mathcal{S}}_{\mathrm{on}}&=&S_{\mathrm{on}}\left(t,x,q_{\mathrm{on}},\rho,R_{\mathrm{on}}\right)-S_{\mathrm{on}}\left(t,x,\tilde{q}_{\mathrm{on}},\tilde{\rho},\tilde{R}_{\mathrm{on}}\right),\\
\tilde{\mathcal{S}}_{\mathrm{off}}&=&S_{\mathrm{off}}\left(t,x,q_{\mathrm{on}},\rho\right)-S_{\mathrm{off}}\left(t,x,\tilde{q}_{\mathrm{on}},\tilde{\rho}\right),\\
\mathcal{V}&=&v(R)-v(P),\\
\mathcal{V}_{x}&=&\partial_{x}v(R)-\partial_{x}v(P)
\end{eqnarray*}

Let us now estimate all the terms appearing in the right hand side of the above inequality.
We start bounding $\tilde{\mathcal{S}}_{\mathrm{on}}$ and $\tilde{\mathcal{S}}_{\mathrm{off}}$ terms:

\begin{eqnarray*}
\int_{0}^{T}\int_{\Omega_{\mathrm{on}}}\left|\tilde{\mathcal{S}}_{\mathrm{on}}\right|\dd x\dd t&=&\int_{0}^{T}\int_{\Omega_{\mathrm{on}}}\left|S_{\mathrm{on}}\left(t,x,q_{\mathrm{on}},\rho,R_{\mathrm{on}}\right)-S_{\mathrm{on}}\left(t,x,\tilde{q}_{\mathrm{on}},\tilde{\rho},\tilde{R}_{\mathrm{on}}\right)\right|\dd x\dd t\\
&\leq&\int_{0}^{T}\int_{\Omega_{\mathrm{on}}}\left(\left|\tilde{\mathcal{S}}_{\mathrm{on}}^{1}\right|+\left|\tilde{\mathcal{S}}_{\mathrm{on}}^{2}\right|+\left|\tilde{\mathcal{S}}_{\mathrm{on}}^{3}\right|\right)\dd x\dd t,
\end{eqnarray*}
where 
\begin{eqnarray*}
\tilde{\mathcal{S}}_{\mathrm{on}}^{1}&=&S_{\mathrm{on}}\left(t,x,q_{\mathrm{on}},\rho,R_{\mathrm{on}}\right)-S_{\mathrm{on}}\left(t,x,q_{\mathrm{on}},\rho,\tilde{R}_{\mathrm{on}}\right),\\
\tilde{\mathcal{S}}_{\mathrm{on}}^{2}&=&S_{\mathrm{on}}\left(t,x,q_{\mathrm{on}},\rho,\tilde{R}_{\mathrm{on}}\right)-S_{\mathrm{on}}\left(t,x,q_{\mathrm{on}},\tilde{\rho},\tilde{R}_{\mathrm{on}}\right),\\ \tilde{\mathcal{S}}_{\mathrm{on}}^{3}&=&S_{\mathrm{on}}\left(t,x,q_{\mathrm{on}},\tilde{\rho},\tilde{R}_{\mathrm{on}}\right)-S_{\mathrm{on}}\left(t,x,\tilde{q}_{\mathrm{on}},\tilde{\rho},\tilde{R}_{\mathrm{on}}\right).
\end{eqnarray*}
First we going to bound $\tilde{\mathcal{S}}_{\mathrm{on}}^{1}$ term ,
\begin{eqnarray*}
\left|\tilde{\mathcal{S}}_{\mathrm{on}}^{1}\right|&=&\left|\mathbf{1}_{\mathrm{on}}q_{\mathrm{on}}\left(1-\rho\right)\left(\left(1-R_{\mathrm{on}}\right)-\left(1-\tilde{R}_{\mathrm{on}}\right)\right)\right|\\
&\leq&\frac{\left\|q_{\mathrm{on}}\right\|_{\L{\infty}([0,T])}}{L}\left|\tilde{R}_{\mathrm{on}}-R_{\mathrm{on}}\right|,
\end{eqnarray*}
thus
\begin{eqnarray*}
\int_{0}^{T}\int_{\Omega_{\mathrm{on}}}\left|\tilde{\mathcal{S}}_{\mathrm{on}}^{1}\right|\dd x\dd t&\leq&\frac{\left\|q_{\mathrm{on}}\right\|_{\L{\infty}([0,T])}}{L}\int_{0}^{T}\int_{\Omega_{\mathrm{on}}}\left|\tilde{R}_{\mathrm{on}}-R_{\mathrm{on}}\right|\dd x\dd t\\
&\leq&\frac{\left\|q_{\mathrm{on}}\right\|_{\L{\infty}([0,T])}}{L}\int_{0}^{T}\left\|\tilde{R}_{\mathrm{on}}-R_{\mathrm{on}}\right\|_{\L{1}(\Omega_{\mathrm{on}})}.
\end{eqnarray*}
Observe that 
$$\left\|R_{\mathrm{on}}-\tilde{R}_{\mathrm{on}}\right\|_{\L{1}(\Omega_{\mathrm{on}})}\leq\left\|\rho(t,\cdot)-\tilde{\rho}(t,\cdot)\right\|_{\L{1}(\Omega_{\mathrm{on}})},$$
since $\int_{\R}\omega_{\eta}(x)\dd x=1$. Then,\\
\begin{eqnarray*}
\int_{0}^{T}\int_{\Omega_{\mathrm{on}}}\left|\tilde{\mathcal{S}}_{\mathrm{on}}^{1}\right|\dd x\dd t&\leq&\frac{\left\|q_{\mathrm{on}}\right\|_{\L{\infty}([0,T])}}{L}\int_{0}^{T}\left\|\rho(t,\cdot)-\tilde{\rho}(t,\cdot)\right\|_{\L{1}(\Omega_{\mathrm{on}})}\dd t\\
&\leq&\frac{\left\|q_{\mathrm{on}}\right\|_{\L{\infty}([0,T])}}{L}\int_{0}^{T}\left\|\rho(t,\cdot)-\tilde{\rho}(t,\cdot)\right\|_{\L{1}(\R)}\dd t.
\end{eqnarray*}
Now we going to bound $\tilde{\mathcal{S}}_{\mathrm{on}}^{2}$.
\begin{eqnarray*}
\left|\tilde{\mathcal{S}}_{\mathrm{on}}^{2}\right|&=&\left|\mathbf{1}_{\mathrm{on}}q_{\mathrm{on}}\left(1-\tilde{R}_{\mathrm{on}}\right)\left(1-\rho\right)\left(\tilde{\rho}-\rho\right)\right|\\
&\leq&\frac{\left\|q_{\mathrm{on}}\right\|_{\L{\infty}([0,T])}}{L}\left|\rho-\tilde{\rho}\right|.
\end{eqnarray*}
Integrating in time and space we have 
\begin{eqnarray*}
\int_{0}^{T}\int_{\Omega_{\mathrm{on}}}\left|\tilde{\mathcal{S}}_{\mathrm{on}}^{2}\right|\dd x\dd t&\leq&\frac{\left\|q_{\mathrm{on}}\right\|_{\L{\infty}([0,T])}}{L}\int_{0}^{T}\left\|\rho(t,\cdot)-\tilde{\rho}(t,\cdot)\right\|_{\L{1}(\Omega_{\mathrm{on}})}\dd t\\
&\leq&\frac{\left\|q_{\mathrm{on}}\right\|_{\L{\infty}([0,T])}}{L}\int_{0}^{T}\left\|\rho(t,\cdot)-\tilde{\rho}(t,\cdot)\right\|_{\L{1}(\R)}\dd t.
\end{eqnarray*}
 Bounding $\tilde{\mathcal{S}}_{\mathrm{on}}^{3}$,
\begin{eqnarray*}
\left|\tilde{\mathcal{S}}_{\mathrm{on}}^{3}\right|&=&\left|\mathbf{1}_{\mathrm{on}}\left(1-\tilde{\rho}\right)\left(1-\tilde{R}_{\mathrm{on}}\right)\left(q_{\mathrm{on}}-\tilde{q}_{\mathrm{on}}\right)\right|\\
&\leq&\frac{\left|q_{\mathrm{on}}-\tilde{q}_{\mathrm{on}}\right|}{L},
\end{eqnarray*}
thus 
\begin{eqnarray*}
\int_{0}^{T}\int_{\Omega_{\mathrm{on}}}\left|\tilde{\mathcal{S}}_{\mathrm{on}}^{3}\right|\dd x\dd t&\leq&\frac{1}{L}\int_{0}^{T}\int_{\Omega_{\mathrm{on}}}\left|q_{\mathrm{on}}-\tilde{q}_{\mathrm{on}}\right| \dd x\dd t\\
&\leq&\left\|q_{\mathrm{on}}-\tilde{q}_{\mathrm{on}}\right\|_{\L{1}([0,T])}.
\end{eqnarray*}
Therefore, we get the following estimate
\begin{eqnarray}
&&\int_{0}^{T}\int_{\Omega_{\mathrm{on}}}\left|\tilde{\mathcal{S}}_{\mathrm{on}}\right|\dd x\dd t\nonumber\\
&&\label{Son_bound}\leq2\|q_{\mathrm{on}}\|_{\L{\infty}([0,T])}\int_{0}^{T}\left\|\rho(t,\cdot)-\tilde{\rho}(t,\cdot)\right\|_{\L{1}(\R)}\dd t+\left\|q_{\mathrm{on}}-\tilde{q}_{\mathrm{on}}\right\|_{\L{1}([0,T])}.
\end{eqnarray}
Regarding $\tilde{\mathcal{S}}_{\mathrm{off}}$ term, we proceed in a similar way like above and we get
\begin{eqnarray*}
\left|\tilde{\mathcal{S}}_{\mathrm{off}}\right|&=&\left|\mathbf{1}_{\mathrm{off}}q_{\mathrm{off}}\rho-\mathbf{1}_{\mathrm{off}}\tilde{q}_{\mathrm{off}}\tilde{\rho}\right|\\
&\leq&\left|\tilde{\mathcal{S}}_{\mathrm{off}}^{1}\right|+\left|\tilde{\mathcal{S}}_{\mathrm{off}}^{2}\right|,
\end{eqnarray*}
where
\begin{eqnarray*}
\tilde{\mathcal{S}}_{\mathrm{off}}^{1}&=&S_{\mathrm{off}}\left(t,x,q_{\mathrm{off}},\rho\right)-S_{\mathrm{off}}\left(t,x,q_{\mathrm{off}},\tilde{\rho}\right),\\
\tilde{\mathcal{S}}_{\mathrm{off}}^{2}&=&S_{\mathrm{off}}\left(t,x,q_{\mathrm{off}},\tilde{\rho}\right)-S_{\mathrm{off}}\left(t,x,\tilde{q}_{\mathrm{off}},\tilde{\rho}\right).
\end{eqnarray*}
Then,
\begin{eqnarray*}
\int_{0}^{T}\int_{\Omega_{\mathrm{off}}}\left|\tilde{\mathcal{S}}_{\mathrm{off}}^{1}\right|\dd x\dd t&\leq& 
\frac{\left\|{q}_{\mathrm{off}}\right\|_{\L{\infty}([0,T])}}{L}\int_{0}^{T}\left\|\rho(t,\cdot)-\tilde{\rho}(t,\cdot)\right\|_{\L{1}(\Omega_{\mathrm{off}})}\dd t\\
&\leq&\frac{\left\|q_{\mathrm{off}}\right\|_{\L{\infty}([0,T])}}{L}\int_{0}^{T}\left\|\rho(t,\cdot)-\tilde{\rho}(t,\cdot)\right\|_{\L{1}(\R)}\dd t,
\end{eqnarray*}
and 
\begin{eqnarray*}
\int_{0}^{T}\int_{\Omega_{\mathrm{off}}}\left|\tilde{\mathcal{S}}_{\mathrm{off}}^{2}\right|\dd x\dd t\leq\left\|q_{\mathrm{off}}-\tilde{q}_{\mathrm{off}}\right\|_{\L{1}([0,T])}.
\end{eqnarray*}
Thus, we get
\begin{eqnarray}
&&\int_{0}^{T}\int_{\Omega_{\mathrm{off}}}\left|\mathcal{S}_{\mathrm{off}}\right|\dd x\dd t\nonumber\\
&&\label{Soff_bound}\leq\frac{\left\|{q}_{\mathrm{off}}\right\|_{\L{\infty}([0,T])}}{L}\int_{0}^{T}\left\|\rho(t,\cdot)-\tilde{\rho}(t,\cdot)\right\|_{\L{1}(\R)}\dd t+\left\|q_{\mathrm{off}}-\tilde{q}_{\mathrm{off}}\right\|_{\L{1}([0,T])}.
\end{eqnarray}
\normalsize{Next, focus on $\mathcal{V}$, by using the following estimate
\begin{eqnarray*}
\left|\mathcal{V}\right|\leq\omega_{\eta}(0)\left\|v'\right\|_{\L{\infty}([0,1])}\left\|\rho(t,\cdot)-\tilde{\rho}(t,\cdot)\right\|_{\L{1}(\R)},
\end{eqnarray*}
we obtain}

\begin{eqnarray}
&&\int_{0}^{T}\int_{\R}\left|\mathcal{V}\right|\left|\partial_{x}\rho(t,x)\right|\dd x \dd t\nonumber\\
&&\leq\label{diff_vel2}\omega_{\eta}(0)\left\|v'\right\|_{\L{\infty}([0,1])}\sup_{t\in[0,T]}\left\|\rho(t,\cdot)\right\|_{\mathbf{TV}(\R)}\int_{0}^{T}\left\|\rho(t,\cdot)-\tilde{\rho}(t,\cdot)\right\|_{\L{1}(\R)}\dd t.
\end{eqnarray}

Next, we pass to $\mathcal{V}_{x}$. Following \cite{friedrich2020nonlocal} we compute

\begin{eqnarray*}
\left|\mathcal{V}_{x} \right|&\leq&\left(2\left(\omega_{\eta}(0)\right)^{2}\left\|v''\right\|_{\L{\infty}([0,1])}+\left\|v'\right\|_{\L{\infty}([0,1])}\left\|\omega_{\eta}'\right\|_{\L{\infty}([0,\eta])}\right)\left\|\rho(t,\cdot)-\tilde{\rho}(t,\cdot)\right\|_{\L{1}(\R)}\\
&&+\omega_{\eta}(0)\|v'\|_{\L{\infty}([0,1])}\left(\left|\rho-\tilde{\rho}\right|\left(t,x+\eta\right)+\left|\rho-\tilde{\rho}\right|\left(t,x\right)\right),
\end{eqnarray*}
thus
\begin{eqnarray}
\int_{0}^{T}\int_{\R}\left|\mathcal{V}_{x} \right|\left|\rho(t,x)\right|\dd x\dd t
\label{diff_Vx2}&\leq&\mathcal{W}\int_{0}^{T}\left\|\rho(t,\cdot)-\tilde{\rho}(t,\cdot)\right\|_{\L{1}(\R)}\dd t,
\end{eqnarray}
where $$\mathcal{W}=\left(2\left(\omega_{\eta}(0)\right)^{2}\left\|v''\right\|_{\L{\infty}([0,1])}+\left\|v'\right\|_{\L{\infty}([0,1])}\left\|\omega_{\eta}'\right\|_{\L{\infty}([0,\eta])}\right)\mathcal{C}_{1}(t)+2\omega_{\eta}(0)\left\|v'\right\|_{\L{\infty}([0,1])}.$$

Collecting together \eqref{Son_bound}, \eqref{Soff_bound}, \eqref{diff_vel2} and \eqref{diff_Vx2} we get
\begin{eqnarray}\label{L1contractive}
\left\|\rho(T,\cdot)-\tilde{\rho}(T,\cdot)\right\|_{\L{1}(\R)}&\leq&\left\|\rho_{0}-\tilde{\rho}_{0}\right\|_{\L{1}(\R)}+\left(\left\|q_{\mathrm{on}}-\tilde{q}_{\mathrm{on}}\right\|_{\L{1}([0,t])}+\left\|q_{\mathrm{off}}-\tilde{q}_{\mathrm{off}}\right\|_{\L{1}([0,t])}\right)\nonumber\\
&&+\mathcal{C}\int_{0}^{T}\left\|\rho(t,\cdot)-\tilde{\rho}(t,\cdot)\right\|_{\L{1}(\R)}\dd t,
\end{eqnarray}
where 
\begin{eqnarray}\label{C}
\mathcal{C}&=&2\left\|q_{\mathrm{on}}\right\|_{\L{\infty}([0,T])}+\left\|{q}_{\mathrm{off}}\right\|_{\L{\infty}([0,T])}+\omega_{\eta}(0)\left\|v'\right\|_{\L{\infty}([0,1])}\sup_{t\in[0,T]}\left\|\rho(t,\cdot)\right\|_{\mathbf{TV}(\R)}+\mathcal{W}
\end{eqnarray}
An application of Gronwall Lemma to \eqref{L1contractive} completes the proof.
\end{proof}
\subsection{Proof of theorem \ref{teo:uniq_exist}}
The convergence of the approximate solutions constructed by \textbf{Algorithm \ref{Algorithm_Scheme}} towards the unique weak entropy solution can be proven by applying Helly's compactness theorem. The latter can be applied due to Lemma \ref{Lemma:Maximum_pple} and Proposition \ref{prop:BV_SpaceTime} and states that there exists a sub-sequence of approximate solution $\rho_{\Delta}$ that converges in $\L{1}$ to a function $\rho\in\L{\infty}\left([0,T]\times\R;[0,1]\right)$. Following a Lax-Wendroff type argument, we can show that the limit function $\rho$ is a weak entropy solution of \eqref{nonRTM} in the sense of Definition \ref{entropy_WS}. Together with the uniqueness result in Theorem \ref{uniqueness}. this concludes the proof of Theorem \ref{teo:uniq_exist}. \\ 

\subsection{Existence for Model 2}
In this section we consider the problem \eqref{nonRTM} with the $S_{\mathrm{on}}$ \eqref{nolreact2}. In \textbf{Algorithm \ref{Algorithm_Scheme}} we substitute $S_{\mathrm{on}}$ term in the reaction step \eqref{Source_disc} by \eqref{Son_def2},
thus now the term \eqref{Source_disc} is given by
\begin{eqnarray}\label{rhon+1_Son2}
\rho_{j}^{n+1}=\rho_{j}^{n+1/2}+\Delta t\mathbf{1}_{\mathrm{on},j}q_{\mathrm{on}}^{n+1/2}\left(1-\max\left\{\rho_{j}^{n+1/2},R_{\mathrm{on},j}^{n+1/2}\right\}\right)-\Delta t\mathbf{1}_{\mathrm{off},j}q_{\mathrm{off}}^{n+1/2}\rho_{j}^{n+1/2}. 
\end{eqnarray}
\begin{lemma}[Maximum Principle]\label{Max_ppio_Son2} Let $\rho_{0}\in\L{\infty}(\R;[0,1])$. Let hypotheses \eqref{Assumption1} and CFL condition \eqref{CFL} hold,
then for all $t>0$ and $x\in\R$ the piece-wise constant approximate solution $\rho_{\Delta}$ constructed through \textbf{Algorithm \ref{Algorithm_Scheme}} is such that
\begin{eqnarray*}
0\leq \rho_{\Delta}(t,x)\leq1.    
\end{eqnarray*}
\end{lemma}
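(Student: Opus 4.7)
My plan is to mirror the induction argument used in Lemma~\ref{Lemma:Maximum_pple}, exploiting the fact that the convective half-step \eqref{rhon1/2} is identical for both models. Assuming $0\le \rho_j^n\le 1$ for all $j\in\Z$, the first part of \textbf{Algorithm~\ref{Algorithm_Scheme}} together with the CFL condition \eqref{CFL} already yields $0\le \rho_j^{n+1/2}\le 1$ by (Theorem~3.3 of \cite{FKG2018}), exactly as in the proof of Lemma~\ref{Lemma:Maximum_pple}. So the entire burden falls on the reaction step \eqref{rhon+1_Son2} with the new source term \eqref{Son_def2}, and I need to show separately that this update keeps $\rho_j^{n+1}$ in $[0,1]$.

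For the upper bound, the key observation is that $\max\{\rho_j^{n+1/2},R_{\mathrm{on},j}^{n+1/2}\}\ge \rho_j^{n+1/2}$, so dropping the max in favor of $\rho_j^{n+1/2}$ increases the on-ramp contribution. Hence
\begin{equation*}
\rho_j^{n+1}\le \rho_j^{n+1/2}+\Delta t\,\mathbf{1}_{\mathrm{on},j}q_{\mathrm{on}}^{n+1/2}(1-\rho_j^{n+1/2})-\Delta t\,\mathbf{1}_{\mathrm{off},j}q_{\mathrm{off}}^{n+1/2}\rho_j^{n+1/2},
\end{equation*}
which is exactly the estimate obtained in the proof of Lemma~\ref{Lemma:Maximum_pple}. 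Rewriting the right-hand side as the convex combination $(1-\Delta t(\mathbf{1}_{\mathrm{on},j}q_{\mathrm{on}}^{n+1/2}+\mathbf{1}_{\mathrm{off},j}q_{\mathrm{off}}^{n+1/2}))\rho_j^{n+1/2}+\Delta t\,\mathbf{1}_{\mathrm{on},j}q_{\mathrm{on}}^{n+1/2}$ and invoking \eqref{CFL} to ensure positivity of the coefficient gives $\rho_j^{n+1}\le 1$.

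For the lower bound, I use Remark~\ref{remark:Bound_R}, which guarantees $R_{\mathrm{on},j}^{n+1/2}\in[0,1]$ as soon as $\rho_j^{n+1/2}\in[0,1]$; consequently $\max\{\rho_j^{n+1/2},R_{\mathrm{on},j}^{n+1/2}\}\le 1$ and the on-ramp term in \eqref{rhon+1_Son2} is non-negative. Discarding it yields
\begin{equation*}
\rho_j^{n+1}\ge \rho_j^{n+1/2}\bigl(1-\Delta t\,\mathbf{1}_{\mathrm{off},j}q_{\mathrm{off}}^{n+1/2}\bigr),
\end{equation*}
and since $\mathbf{1}_{\mathrm{off},j}\le 1/L$, the CFL condition \eqref{CFL} makes the bracket non-negative, so $\rho_j^{n+1}\ge 0$.

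I do not expect a genuine obstacle here: the splitting isolates the source term, and the only reason the max in \eqref{Son_def2} was introduced in the first place was to enforce exactly this kind of monotone structure. The only subtle point worth double-checking is that the same CFL constant \eqref{CFL} already accommodates both directions of the reaction step, which it does because the time-step constraint $\Delta t\le L/(\|q_{\mathrm{on}}\|_\infty+\|q_{\mathrm{off}}\|_\infty)$ bounds $\Delta t\,\mathbf{1}_{\mathrm{on},j}q_{\mathrm{on}}^{n+1/2}+\Delta t\,\mathbf{1}_{\mathrm{off},j}q_{\mathrm{off}}^{n+1/2}\le 1$ uniformly in $j$ and $n$. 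Closing the induction then delivers the claim for all $t>0$ and $x\in\R$.
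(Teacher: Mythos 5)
Your proof is correct and follows essentially the same path as the paper: induction, the convective half-step handled by the CFL condition as in Lemma \ref{Lemma:Maximum_pple}, and the reaction step reduced to the same convex-combination estimate $\rho_j^{n+1}\le\bigl(1-\Delta t(\mathbf{1}_{\mathrm{on},j}q_{\mathrm{on}}^{n+1/2}+\mathbf{1}_{\mathrm{off},j}q_{\mathrm{off}}^{n+1/2})\bigr)\rho_j^{n+1/2}+\Delta t\,\mathbf{1}_{\mathrm{on},j}q_{\mathrm{on}}^{n+1/2}$, with the lower bound from Remark \ref{remark:Bound_R} and the CFL bound on the off-ramp term. The only difference is cosmetic: you obtain the key inequality directly from $\max\{\rho_j^{n+1/2},R_{\mathrm{on},j}^{n+1/2}\}\ge\rho_j^{n+1/2}$, whereas the paper expands the max as $\tfrac{1}{2}(a+b+|a-b|)$ and cancels terms, arriving at the same bound.
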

\begin{proof}
The proof is made by induction. We assume that $0\leq\rho_{j}^{n}\leq1$ for all $j\in\Z$. Consider the step \eqref{rhon1/2} of \textbf{Algorithm \ref{Algorithm_Scheme}}, by CFL condition \eqref{CFL} we have $0\leq\rho_{j}^{n+1/2}\leq 1$ for $j\in\Z$.\\

Now focus on the remaining step, involving the source term. 
\begin{eqnarray*}
\rho_{j}^{n+1}&=&\rho_{j}^{n+1/2}+\Delta t\mathbf{1}_{\mathrm{on},j}q_{\mathrm{on}}^{n+1/2}\left(1-\max\left\{\rho_{j}^{n+1/2},R_{\mathrm{on},j}^{n+1/2}\right\}\right)-\Delta t\mathbf{1}_{\mathrm{off},j}q_{\mathrm{off}}^{n+1/2}\rho_{j}^{n+1/2}\\
&=&\rho_{j}^{n+1/2}+\Delta t\mathbf{1}_{\mathrm{on},j}q_{\mathrm{on}}^{n+1/2}\left(1-\frac{\rho_{j}^{n+1/2}+R_{\mathrm{on},j}^{n+1/2}+\left|\rho_{j}^{n+1/2}-R_{\mathrm{on},j}^{n+1/2}\right|}{2}\right)\\
&&-\Delta t\mathbf{1}_{\mathrm{off},j}q_{\mathrm{off}}^{n+1/2}\rho_{j}^{n+1/2}\\
&=&\rho_{j}^{n+1/2}+\Delta t\mathbf{1}_{\mathrm{on},j}q_{\mathrm{on}}^{n+1/2}-\frac{\Delta t}{2}\mathbf{1}_{\mathrm{on},j}q_{\mathrm{on}}^{n+1/2}\rho_{j}^{n+1/2}-\frac{\Delta t}{2}\mathbf{1}_{\mathrm{on},j}q_{\mathrm{on}}^{n+1/2}R_{\mathrm{on},j}^{n+1/2}\\
&&-\frac{\Delta t}{2}\mathbf{1}_{\mathrm{on},j}q_{\mathrm{on}}^{n+1/2}\left|\rho_{j}^{n+1/2}-R_{\mathrm{on},j}^{n+1/2}\right|-\Delta t\mathbf{1}_{\mathrm{off},j}q_{\mathrm{off}}^{n+1/2}\rho_{j}^{n+1/2}\\
&\leq&\rho_{j}^{n+1/2}+\Delta t\mathbf{1}_{\mathrm{on},j}q_{\mathrm{on}}^{n+1/2}-\frac{\Delta t}{2}\mathbf{1}_{\mathrm{on},j}q_{\mathrm{on}}^{n+1/2}\rho_{j}^{n+1/2}-\cancel{\frac{\Delta t}{2}\mathbf{1}_{\mathrm{on},j}q_{\mathrm{on}}^{n+1/2}R_{\mathrm{on},j}^{n+1/2}}\\
&&+\cancel{\frac{\Delta t}{2}\mathbf{1}_{\mathrm{on},j}q_{\mathrm{on}}^{n+1/2}\left|R_{\mathrm{on},j}^{n+1/2}\right|}-\frac{\Delta t}{2}\mathbf{1}_{\mathrm{on},j}q_{\mathrm{on}}^{n+1/2}\left|\rho_{j}^{n+1/2}\right|-\Delta t\mathbf{1}_{\mathrm{off},j}q_{\mathrm{off}}^{n+1/2}\rho_{j}^{n+1/2}\\
&=&\rho_{j}^{n+1/2}+\Delta t\mathbf{1}_{\mathrm{on},j}q_{\mathrm{on}}^{n+1/2}-\Delta t\mathbf{1}_{\mathrm{on},j}q_{\mathrm{on}}^{n+1/2}\rho_{j}^{n+1/2}-\Delta t\mathbf{1}_{\mathrm{off},j}q_{\mathrm{off}}^{n+1/2}\rho_{j}^{n+1/2}\\
&=&\label{Max_Pple_Son2}\left(1-\Delta t\left(\mathbf{1}_{\mathrm{on},j}q_{\mathrm{on}}^{n+1/2}+\mathbf{1}_{\mathrm{off},j}q_{\mathrm{off}}^{n+1/2}\right)\right)\rho_{j}^{n+1/2}+\Delta t\mathbf{1}_{\mathrm{on},j}q_{\mathrm{on}}^{n+1/2},
\end{eqnarray*}
now we can proceed as in Lemma \ref{Lemma:Maximum_pple}.
\end{proof}
\begin{lemma}\label{L1_norm_Son2}
Let $\rho_{0}\in\L{1}(\R,[0,1])$. Let \eqref{Assumption1} and the CFL condition \eqref{CFL} hold. Then, the piece-wise constant approximate solution $\rho_{\Delta}$ constructed through \textbf{Algorithm \ref{Algorithm_Scheme}} satisfies,
\begin{eqnarray*}
\left\|\rho_{\Delta}(t)\right\|_{\L{1}(\R)}\leq\mathcal{C}_{1}(t),
\end{eqnarray*}
where $\mathcal{C}_{1}$ like in \eqref{C_1}.
\end{lemma}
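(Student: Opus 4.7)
The plan is to mirror the proof of Lemma \ref{Lemma:L1_norm} almost verbatim, the only structural change being that the product $(1-\rho_{j}^{n+1/2})(1-R_{\mathrm{on},j}^{n+1/2})$ in Model 1 is replaced by $1-\max\{\rho_{j}^{n+1/2},R_{\mathrm{on},j}^{n+1/2}\}$. The key observation that makes this substitution cost-free is the elementary inequality
\[
1-\max\{\rho_{j}^{n+1/2},R_{\mathrm{on},j}^{n+1/2}\}\leq 1-\rho_{j}^{n+1/2},
\]
which is immediate from $\max\{a,b\}\geq a$. This estimate will reduce the Model 2 source update to a pointwise upper bound of exactly the same form that drove the proof of Lemma \ref{Lemma:L1_norm}.

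First I would record, as in Lemma \ref{Lemma:L1_norm}, that the conservative upwind step \eqref{rhon1/2} preserves the $\L{1}$ norm, so $\|\rho^{n+1/2}\|_{\L{1}(\R)}=\|\rho^{n}\|_{\L{1}(\R)}$. Next, using the Model 2 maximum principle from Lemma \ref{Max_ppio_Son2}, both $\rho_{j}^{n+1/2}$ and $\rho_{j}^{n+1}$ belong to $[0,1]$, so absolute values can be dropped freely. Starting from the reaction update \eqref{rhon+1_Son2} and applying the $\max$-inequality above, I would derive
\[
|\rho_{j}^{n+1}|\leq |\rho_{j}^{n+1/2}|+\Delta t\,\mathbf{1}_{\mathrm{on},j}q_{\mathrm{on}}^{n+1/2}\bigl(1-|\rho_{j}^{n+1/2}|\bigr)-\Delta t\,\mathbf{1}_{\mathrm{off},j}q_{\mathrm{off}}^{n+1/2}|\rho_{j}^{n+1/2}|,
\]
which is precisely inequality \eqref{abs_rhon+1}.

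From this point on, the argument is a verbatim copy of Lemma \ref{Lemma:L1_norm}: multiply by $\Delta x$, sum over $j\in\Z$, use the normalizations $\Delta x\sum_{j\in\Omega_{\mathrm{on}}^{k}}\mathbf{1}_{\mathrm{on},j}=1$ and $\Delta x\sum_{j\in\Omega_{\mathrm{off}}^{k}}\mathbf{1}_{\mathrm{off},j}=1$, bound the averages on $\Omega_{\mathrm{on}}^{k}$ and $\Omega_{\mathrm{off}}^{k}$ from below by the minima of $\rho^{n+1/2}$ there, and iterate in $n=0,\ldots,N_T-1$. This yields $\|\rho_{\Delta}(t)\|_{\L{1}(\R)}\leq \mathcal{C}_{1}(t)$ with the same $\mathcal{C}_{1}$ defined in \eqref{C_1}, which is why the statement of the lemma can reuse that constant without modification.

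There is really no obstacle here: the $\max$-inequality is the one ingredient one needs to verify, and it reduces the whole estimate to the Model 1 computation. It is worth stressing that the Model 2 source, although not a product, still respects the bound $S_{\mathrm{on}}\leq \mathbf{1}_{\mathrm{on}}q_{\mathrm{on}}(1-\rho)$, so the derivation of \eqref{C_1} carries through with the same constants and no new terms appear in $\mathcal{C}_{1}$.
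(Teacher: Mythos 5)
Your proposal is correct and follows essentially the same route as the paper: reduce the Model 2 reaction step to the pointwise bound \eqref{abs_rhon+1} via $1-\max\{\rho_{j}^{n+1/2},R_{\mathrm{on},j}^{n+1/2}\}\leq 1-\rho_{j}^{n+1/2}$, and then repeat the argument of Lemma \ref{Lemma:L1_norm}. The only cosmetic difference is that you invoke $\max\{a,b\}\geq a$ directly, whereas the paper obtains the same inequality from the computation already carried out in the proof of Lemma \ref{Max_ppio_Son2}.
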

\begin{proof}
By \eqref{Max_Pple_Son2} and CFL condition \eqref{CFL} we have
\begin{eqnarray*}
\left|\rho_{j}^{n+1}\right|\leq\left|\rho_{j}^{n+1/2}\right|+\Delta t\mathbf{1}_{\mathrm{on},j}q_{\mathrm{on}}^{n+1/2}\left(1-\left|\rho_{j}^{n+1/2}\right|\right)-\Delta t \mathbf{1}_{\mathrm{off},j}q_{\mathrm{off}}^{n+1/2}\left|\rho_{j}^{n+1/2}\right|,
\end{eqnarray*}
this cases reduce to \eqref{abs_rhon+1} and we can proceed as in Lemma \ref{Lemma:L1_norm}. 
\end{proof}
\subsection{BV estimates}\label{BV_estimates_Son2}
\begin{lemma}\label{Lemma:Lipschitz_Son2}
The map $S_{\mathrm{on}}$ given in \eqref{rhon+1_Son2} is Lipschitz continuous in second, third and fourth argument with Lipschitz constant $\left\|q_{\mathrm{on}}\right\|_{\L{\infty}([0,T])}$.
\end{lemma}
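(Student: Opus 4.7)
The plan is to mirror the proof of Lemma \ref{Lipschitz_Son_Soff} almost verbatim, decomposing the increment via the triangle inequality into one variation per argument. The structural difference between \eqref{Son_def2} and \eqref{Son_def1} is only that the product $(1-\rho)(1-R_{\mathrm{on}})$ has been replaced by $1-\max\{\rho,R_{\mathrm{on}}\}$, so the one new ingredient needed is a Lipschitz estimate for the map $(a,b)\mapsto\max\{a,b\}$, namely the elementary inequality
\begin{equation*}
\left|\max\{a,b\}-\max\{c,b\}\right|\le|a-c|,
\end{equation*}
which follows either by direct case analysis or from the identity $\max\{a,b\}=(a+b+|a-b|)/2$ already used in the proof of Lemma \ref{Max_ppio_Son2}.

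Setting $\mathcal{S}_{\mathrm{on}}=S_{\mathrm{on}}(t,x,\rho,R_{\mathrm{on}})-S_{\mathrm{on}}(t,\tilde{x},\tilde{\rho},\tilde{R}_{\mathrm{on}})$, I would split
\begin{eqnarray*}
\left|\mathcal{S}_{\mathrm{on}}\right|
&\le& \left|S_{\mathrm{on}}(t,x,\rho,R_{\mathrm{on}})-S_{\mathrm{on}}(t,x,\tilde{\rho},R_{\mathrm{on}})\right|\\
&& +\left|S_{\mathrm{on}}(t,x,\tilde{\rho},R_{\mathrm{on}})-S_{\mathrm{on}}(t,x,\tilde{\rho},\tilde{R}_{\mathrm{on}})\right|\\
&& +\left|S_{\mathrm{on}}(t,x,\tilde{\rho},\tilde{R}_{\mathrm{on}})-S_{\mathrm{on}}(t,\tilde{x},\tilde{\rho},\tilde{R}_{\mathrm{on}})\right|.
\end{eqnarray*}
For the first two summands the common factor $\mathbf{1}_{\mathrm{on}}q_{\mathrm{on}}$ is bounded by $\|q_{\mathrm{on}}\|_{\L{\infty}([0,T])}$ and the remainder is controlled by the max-Lipschitz inequality above, yielding contributions of $\|q_{\mathrm{on}}\|_{\L{\infty}([0,T])}|\rho-\tilde{\rho}|$ and $\|q_{\mathrm{on}}\|_{\L{\infty}([0,T])}|R_{\mathrm{on}}-\tilde{R}_{\mathrm{on}}|$ respectively. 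For the third summand, the factor $1-\max\{\tilde{\rho},\tilde{R}_{\mathrm{on}}\}$ lies in $[0,1]$ by Lemma \ref{Max_ppio_Son2}, so what remains is bounded by $\|q_{\mathrm{on}}\|_{\L{\infty}([0,T])}|\mathbf{1}_{\mathrm{on}}-\tilde{\mathbf{1}}_{\mathrm{on}}|$. Adding the three contributions gives exactly the claimed Lipschitz estimate
\begin{equation*}
\left|\mathcal{S}_{\mathrm{on}}\right|\le \|q_{\mathrm{on}}\|_{\L{\infty}([0,T])}\left(|\rho-\tilde{\rho}|+|R_{\mathrm{on}}-\tilde{R}_{\mathrm{on}}|+|\mathbf{1}_{\mathrm{on}}-\tilde{\mathbf{1}}_{\mathrm{on}}|\right).
\end{equation*}

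There is no substantive obstacle: the proof is entirely routine once the max-Lipschitz inequality is isolated. The only point requiring a little care is to apply that inequality separately in each of the two slots of the max, rather than via a cruder joint bound in both variables at once, so that the additive Lipschitz constants in $\rho$ and in $R_{\mathrm{on}}$ each come out as exactly $\|q_{\mathrm{on}}\|_{\L{\infty}([0,T])}$ and not a larger multiple.
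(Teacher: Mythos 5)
Your proposal is correct and follows essentially the same route as the paper: the identical three-term triangle-inequality decomposition in $\rho$, $R_{\mathrm{on}}$ and $x$, with the bound $\left|1-\max\{\tilde{\rho},\tilde{R}_{\mathrm{on}}\}\right|\le 1$ for the spatial increment. The only difference is presentational — you isolate the $1$-Lipschitz property of $(a,b)\mapsto\max\{a,b\}$ as a stated elementary inequality, whereas the paper re-derives it inline each time via the identity $\max\{a,b\}=\tfrac{1}{2}\left(a+b+|a-b|\right)$, so the two arguments are substantively the same.
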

\begin{proof}
\begin{eqnarray*}
\left|S_{\mathrm{on}}(t,x,\rho,R_{\mathrm{on}})-S_{\mathrm{on}}(t,\tilde{x},\tilde{\rho},\tilde{R}_{\mathrm{on}})\right|&\leq&\mathcal{S}_{1}+\mathcal{S}_{2}+\mathcal{S}_{3},
\end{eqnarray*}
where
\begin{eqnarray*}
\mathcal{S}_{1}&=&\left|S_{\mathrm{on}}(t,x,\rho,R_{\mathrm{on}})-S_{\mathrm{on}}(t,x,\tilde{\rho},R_{\mathrm{on}})\right|\\
\mathcal{S}_{2}&=&\left|S_{\mathrm{on}}(t,x,\tilde{\rho},R_{\mathrm{on}})-S_{\mathrm{on}}(t,{x},\tilde{\rho},\tilde{R}_{\mathrm{on}})\right|\\
\mathcal{S}_{3}&=&\left|S_{\mathrm{on}}(t,x,\tilde{\rho},\tilde{R}_{\mathrm{on}})-S_{\mathrm{on}}(t,\tilde{x},\tilde{\rho},\tilde{R}_{\mathrm{on}})\right|.
\end{eqnarray*}
by the definition of $S_{\mathrm{on}}$ term we have 
\begin{eqnarray*}
\mathcal{S}_{1}&\leq&\left\|q_{\mathrm{on}}\right\|_{\L{\infty}([0,T])}\bigg|1-\max\left\{\rho,R_{\mathrm{on}}\right\}-\left(1-\max\left\{\tilde{\rho},R_{\mathrm{on}}\right\}\right)\bigg|\\
&=&\left\|q_{\mathrm{on}}\right\|_{\L{\infty}([0,T])}\bigg|\max\left\{\tilde{\rho},R_{\mathrm{on}}\right\}-\max\left\{\rho,R_{\mathrm{on}}\right\}\bigg|\\
&=&\frac{\left\|q_{\mathrm{on}}\right\|_{\L{\infty}([0,T])}}{2}\bigg|\tilde{\rho}+R_{\mathrm{on}}+\left|\tilde{\rho}-R_{\mathrm{on}}\right|-\left(\rho+R_{\mathrm{on}}+\left|\rho-R_{\mathrm{on}}\right|\right)\bigg|\\
&=&\frac{\left\|q_{\mathrm{on}}\right\|_{\L{\infty}([0,T])}}{2}\bigg|\tilde{\rho}-\rho+\left|\tilde{\rho}-R_{\mathrm{on}}\right|-\left|\rho-R_{\mathrm{on}}\right|\bigg|\\
&\leq&\frac{\left\|q_{\mathrm{on}}\right\|_{\L{\infty}([0,T])}}{2}\bigg|\tilde{\rho}-\rho+\left|\tilde{\rho}-\rho\right|+\cancel{\left|\rho-R_{\mathrm{on}}\right|}-\cancel{\left|\rho-R_{\mathrm{on}}\right|}\bigg|\\
&\leq&\frac{\left\|q_{\mathrm{on}}\right\|_{\L{\infty}([0,T])}}{2}\left(\left|\tilde{\rho}-\rho\right|+\left|\tilde{\rho}-\rho\right|\right)\\
&=&\left\|q_{\mathrm{on}}\right\|_{\L{\infty}([0,T])}\left|\tilde{\rho}-\rho\right|.
\end{eqnarray*}
Pass now to $\mathcal{S}_{2}$:
\begin{eqnarray*}
\mathcal{S}_{2}&\leq&\left\|q_{\mathrm{on}}\right\|_{\L{\infty}([0,T])}\bigg|\max\left\{\tilde{\rho},\tilde{R}_{\mathrm{on}}\right\}-\max\left\{\tilde{\rho},R_{\mathrm{on}}\right\}\bigg|\\
&=&\frac{\left\|q_{\mathrm{on}}\right\|_{\L{\infty}([0,T])}}{2}\bigg|\tilde{\rho}+\tilde{R_{\mathrm{on}}}+\left|\tilde{\rho}-\tilde{R}_{\mathrm{on}}\right|-\left(\tilde{\rho}+R_{\mathrm{on}}+\left|\tilde{\rho}-R_{\mathrm{on}}\right|\right)\bigg|\\
&=&\frac{\left\|q_{\mathrm{on}}\right\|_{\L{\infty}([0,T])}}{2}\bigg|\tilde{R}_{\mathrm{on}}-R_{\mathrm{on}}+\left|\tilde{\rho}-\tilde{R_{\mathrm{on}}}\right|-\left|\tilde{\rho}-R_{\mathrm{on}}\right|\bigg|\\
&=&\frac{\left\|q_{\mathrm{on}}\right\|_{\L{\infty}([0,T])}}{2}\bigg|\tilde{R}_{\mathrm{on}}-R_{\mathrm{on}}+\left|\tilde{\rho}-R_{\mathrm{on}}+R_{\mathrm{on}}-\tilde{R_{\mathrm{on}}}\right|-\left|\tilde{\rho}-R_{\mathrm{on}}\right|\bigg|\\
&\leq&\frac{\left\|q_{\mathrm{on}}\right\|_{\L{\infty}([0,T])}}{2}\bigg|\tilde{R}_{\mathrm{on}}-R_{\mathrm{on}}+\cancel{\left|\tilde{\rho}-R_{\mathrm{on}}\right|}+\left|\tilde{R}_{\mathrm{on}}-R_{\mathrm{on}}\right|-\cancel{\left|\tilde{\rho}-R_{\mathrm{on}}\right|}\bigg|\\
&\leq&\left\|q_{\mathrm{on}}\right\|_{\L{\infty}([0,T])}\left|R_{\mathrm{on}}-\tilde{R}_{\mathrm{on}}\right|.
\end{eqnarray*}
Next, we analyze the $\mathcal{S}_{3}$ term:
\begin{eqnarray*}
\mathcal{S}_{3}&=&\bigg|\mathbf{1}_{\mathrm{on}}q_{\mathrm{on}}\left(1-\max\left\{\tilde{\rho},\tilde{R}_{\mathrm{on}}\right\}\right)-\mathbf{\tilde{1}}_{\mathrm{on}}q_{\mathrm{on}}\left(1-\max\left\{\tilde{\rho},\tilde{R}_{\mathrm{on}}\right\}\right)\bigg|\\
&\leq&\left\|q_{\mathrm{on}}\right\|_{\L{\infty}([0,T])}\bigg|\mathbf{1}_{\mathrm{on}}-\mathbf{1}_{\mathrm{on}}\max\left\{\tilde{\rho},\tilde{R}_{\mathrm{on}}\right\}-\mathbf{\tilde{1}}_{\mathrm{on}}+\mathbf{\tilde{1}}_{\mathrm{on}}\max\left\{\tilde{\rho},\tilde{R}_{\mathrm{on}}\right\}\bigg|\\
&=&\left\|q_{\mathrm{on}}\right\|_{\L{\infty}([0,T])}\bigg|\mathbf{1}_{\mathrm{on}}-\mathbf{\tilde{1}}_{\mathrm{on}}-\frac{1}{2}\left(\tilde{\rho}+\tilde{R}_{\mathrm{on}}+\left|\tilde{\rho}-\tilde{R}_{\mathrm{on}}\right|\right)\left(\mathbf{1}_{\mathrm{on}}-\mathbf{\tilde{1}}_{\mathrm{on}}\right)\bigg|\\
&\leq&\left\|q_{\mathrm{on}}\right\|_{\L{\infty}([0,T])}\bigg|1-\frac{1}{2}\left(\tilde{\rho}+\cancel{\tilde{R}_{\mathrm{on}}}-\cancel{\left|\tilde{R}_{\mathrm{on}}\right|}+\left|\tilde{\rho}\right|\right)\bigg|\bigg|\mathbf{1}_{\mathrm{on}}-\mathbf{\tilde{1}}_{\mathrm{on}}\bigg|\\
&=&\left\|q_{\mathrm{on}}\right\|_{\L{\infty}([0,T])}\left|\mathbf{1}_{\mathrm{on}}-\mathbf{\tilde{1}}_{\mathrm{on}}\right|\left|1-\tilde{\rho}\right|\\
&\leq&\left\|q_{\mathrm{on}}\right\|_{\L{\infty}([0,T])}\left|\mathbf{1}_{\mathrm{on}}-\mathbf{\tilde{1}}_{\mathrm{on}}\right|.
\end{eqnarray*}
\end{proof}
\begin{proposition}[$\BV$ estimate in space]\label{prop:BV_Space_Son2} 
Let $\rho_{0}\in\left(\L{1}\cap\BV\right)\left(\R;[0,1]\right).$ Assume that the hypotheses \eqref{Assumption1} and CFL condition \eqref{CFL} hold. Then, for $n=0,\ldots,N_{T}-1$ the following estimate holds
\begin{eqnarray*}
\sum_{j\in\Z}\left|\rho_{j+1}^{n}-\rho_{j}^{n}\right|\leq e^{T\mathcal{H}}\left(TV(\rho^{0})+T\left(\frac{\left\|q_{\mathrm{on}}\right\|_{\L{\infty}([0,T])}+\left\|q_{\mathrm{off}}\right\|_{\L{\infty}([0,T])}}{L}\right)\right),
\end{eqnarray*}
with $\mathcal{H}$ like in \eqref{H}.
\end{proposition}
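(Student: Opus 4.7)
The plan is to mirror the proof of Proposition \ref{prop:BV_Space} for Model 1 almost line-by-line; the only essentially new ingredient needed for the max-based source is the Lipschitz estimate just established in Lemma \ref{Lemma:Lipschitz_Son2}. First I would split the one-step increment along the operator-splitting scheme as
\[
\rho_{j+1}^{n+1}-\rho_{j}^{n+1} = \bigl(\rho_{j+1}^{n+1/2}-\rho_{j}^{n+1/2}\bigr) + \Delta t\bigl(S_{\mathrm{on},j+1}^{n+1/2}-S_{\mathrm{on},j}^{n+1/2}\bigr) - \Delta t\bigl(S_{\mathrm{off},j+1}^{n+1/2}-S_{\mathrm{off},j}^{n+1/2}\bigr),
\]
so that the reaction and convective contributions decouple and can be treated separately.

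Next, I would bound the on-ramp source increment by applying Lemma \ref{Lemma:Lipschitz_Son2} in each of its three slots (density, non-local term, indicator), getting contributions of size $\|q_{\mathrm{on}}\|_{\L{\infty}([0,T])}$ times $|\rho_{j+1}^{n+1/2}-\rho_j^{n+1/2}|$, $|R_{\mathrm{on},j+1}^{n+1/2}-R_{\mathrm{on},j}^{n+1/2}|$ and $|\mathbf{1}_{\mathrm{on},j+1}-\mathbf{1}_{\mathrm{on},j}|$ respectively; the off-ramp piece is handled by Lemma \ref{Lipschitz_Son_Soff}. Summing over $j$, I would absorb the convolution sum by Remark \ref{remark:bound_sum_Ron} and the indicator sum by the telescoping identity $\sum_j|\mathbf{1}_{\mathrm{on},j+1}-\mathbf{1}_{\mathrm{on},j}|\leq 2/L$ (and likewise for the off-ramp indicator). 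Because the Lipschitz constants supplied by Lemma \ref{Lemma:Lipschitz_Son2} coincide with those used in Model 1, this reproduces verbatim inequality \eqref{BV_react}, namely
\[
\sum_{j\in\Z}|\rho_{j+1}^{n+1}-\rho_j^{n+1}| \leq \Bigl(1 + \tfrac{\Delta t}{L}\bigl(2\|q_{\mathrm{on}}\|_{\L{\infty}([0,T])}+\|q_{\mathrm{off}}\|_{\L{\infty}([0,T])}\bigr)\Bigr)\sum_{j\in\Z}|\rho_{j+1}^{n+1/2}-\rho_j^{n+1/2}| + \Delta t\,\frac{\|q_{\mathrm{on}}\|_{\L{\infty}([0,T])}+\|q_{\mathrm{off}}\|_{\L{\infty}([0,T])}}{L}.
\]

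Finally, I would insert the convective BV bound from \cite{FKG2018},
\[
\sum_{j\in\Z}|\rho_{j+1}^{n+1/2}-\rho_j^{n+1/2}| \leq \bigl(1+\Delta t\,\omega_\eta(0)\mathcal{L}\bigr)\sum_{j\in\Z}|\rho_{j+1}^{n}-\rho_j^{n}|,
\]
and iterate the resulting affine recursion from $n=0$; using $(1+a\Delta t)(1+b\Delta t)\leq e^{(a+b)\Delta t}$ produces the exponent $\mathcal{H}$ of \eqref{H}, while summing the inhomogeneous term gives the additive $T(\|q_{\mathrm{on}}\|_{\L{\infty}([0,T])}+\|q_{\mathrm{off}}\|_{\L{\infty}([0,T])})/L$ contribution. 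I do not anticipate any genuine obstacle: the non-smoothness introduced by the $\max$ has already been absorbed in the Lipschitz bound of Lemma \ref{Lemma:Lipschitz_Son2}, so the only remaining work is the same discrete Gronwall argument carried out for Model 1.
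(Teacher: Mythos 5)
Your proposal is correct and takes essentially the same route as the paper: the paper's own proof of Proposition \ref{prop:BV_Space_Son2} consists of the single observation that, thanks to the Lipschitz bound of Lemma \ref{Lemma:Lipschitz_Son2} (with the same constant $\left\|q_{\mathrm{on}}\right\|_{\L{\infty}([0,T])}$ as in Model 1), the argument of Proposition \ref{prop:BV_Space} applies verbatim, which is exactly the splitting, source-Lipschitz, convolution (Remark \ref{remark:bound_sum_Ron}), convective bound and discrete Gronwall iteration you spell out. The only cosmetic difference is your telescoping bound $2/L$ for the indicator sums versus the paper's $1/L$ (it sums only over $j\in\Omega_{\mathrm{on}}^{k}$), which would merely inflate the inhomogeneous term by a harmless factor and does not affect the structure of the estimate.
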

\begin{proof}
Due to the results obtained in Lemma \ref{Lemma:Lipschitz_Son2}, the proof is analogous to that one of Proposition \ref{prop:BV_Space}.
\end{proof}
\begin{proposition}[$\BV$ estimate in space and time]\label{prop:BV_SpaceTime_Son2}
Let hypotheses \eqref{Assumption1} hold, $\rho_{0}\in\left(\L{1}\cap\BV\right)\left(\R;[0,1]\right)$. If the CFL condition \eqref{CFL} holds, then, for every $T>0$ the following discrete space and time total variation estimate is satisfied:  
\begin{eqnarray*}
TV(\rho_{\Delta};[0,T]\times\R)
&\leq& T\mathcal{C}_{xt}(T),
\end{eqnarray*}
with 
$\mathcal{C}_{xt}(T)$ defined in \eqref{C_xt}.
\end{proposition}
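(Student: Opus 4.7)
The plan is to follow the scheme of proof of Proposition \ref{prop:BV_SpaceTime} verbatim, replacing each ingredient used there by its Model~2 counterpart. Namely, I will start from the standard decomposition
\begin{equation*}
TV(\rho_{\Delta};[0,T]\times\R)=\sum_{n=0}^{N_{T}-1}\Delta t\sum_{j\in\Z}\left|\rho_{j+1}^{n}-\rho_{j}^{n}\right|+(T-N_{T}\Delta t)\sum_{j\in\Z}\left|\rho_{j+1}^{N_{T}}-\rho_{j}^{N_{T}}\right|+\sum_{n=0}^{N_{T}-1}\Delta x\sum_{j\in\Z}\left|\rho_{j}^{n+1}-\rho_{j}^{n}\right|,
\end{equation*}
and treat the three summands separately. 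The spatial pieces are bounded by $T e^{T\mathcal{H}}\bigl(TV(\rho_{0})+T(\|q_{\mathrm{on}}\|_{\L{\infty}}+\|q_{\mathrm{off}}\|_{\L{\infty}})/L\bigr)$, which is provided directly by Proposition \ref{prop:BV_Space_Son2}, exactly as in \eqref{BV_Space_time1}.

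For the temporal sum, I would insert the intermediate quantity $\rho_j^{n+1/2}$ and split $|\rho_j^{n+1}-\rho_j^n|\le |\rho_j^{n+1}-\rho_j^{n+1/2}|+|\rho_j^{n+1/2}-\rho_j^n|$. The convective piece $|\rho_j^{n+1/2}-\rho_j^n|$ is handled exactly as in the Model~1 proof, using the Lipschitz continuity of the numerical flux together with Remark \ref{remark:bound_sum_Ron} and Proposition \ref{prop:BV_Space_Son2}, giving the same $2\mathcal{L}\Delta t\sum_j|\rho_{j+1}^n-\rho_j^n|$ contribution. For the source piece, I use the definition \eqref{rhon+1_Son2} to write
\begin{equation*}
\left|\rho_j^{n+1}-\rho_j^{n+1/2}\right|\le \Delta t\,\mathbf{1}_{\mathrm{on},j}q_{\mathrm{on}}^{n+1/2}\bigl(1-\max\{\rho_j^{n+1/2},R_{\mathrm{on},j}^{n+1/2}\}\bigr)+\Delta t\,\mathbf{1}_{\mathrm{off},j}q_{\mathrm{off}}^{n+1/2}\rho_j^{n+1/2},
\end{equation*}
and observe that $0\le 1-\max\{\rho_j^{n+1/2},R_{\mathrm{on},j}^{n+1/2}\}\le 1$ by the Maximum Principle of Lemma \ref{Max_ppio_Son2} and Remark \ref{remark:Bound_R}. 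After multiplication by $\Delta x$ and summation in $j$, using $\Delta x\sum_{j}\mathbf{1}_{\mathrm{on},j}=1$ and $\Delta x\sum_{j}\mathbf{1}_{\mathrm{off},j}|\rho_j^{n+1/2}|\le \|\rho^{n+1/2}\|_{\L{1}(\R)}/L$, this yields
\begin{equation*}
\Delta x\sum_{j\in\Z}\left|\rho_j^{n+1}-\rho_j^{n+1/2}\right|\le \Delta t\,q_{\mathrm{on}}^{n+1/2}+\Delta t\,\frac{q_{\mathrm{off}}^{n+1/2}}{L}\|\rho^{n}\|_{\L{1}(\R)},
\end{equation*}
where the conservativity of the convective step converts $\|\rho^{n+1/2}\|_{\L{1}}$ into $\|\rho^n\|_{\L{1}}$. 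Invoking Lemma \ref{L1_norm_Son2} to replace $\|\rho^n\|_{\L{1}(\R)}$ by $\mathcal{C}_1(T)$ yields a bound that is pointwise dominated by the estimate \eqref{first_BVST_RHS} used for Model~1.

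Combining these contributions summed over $n=0,\dots,N_T-1$ and adding the spatial piece gives exactly the upper bound $T\mathcal{C}_{xt}(T)$ with $\mathcal{C}_{xt}(T)$ defined in \eqref{C_xt}. No new obstacle arises: the only ingredient specific to Model~2 is the simple bound $1-\max\{\rho,R_{\mathrm{on}}\}\le 1$, which in fact makes the source-step time increment smaller than in Model~1, so the common bound \eqref{C_xt} absorbs it. For this reason the proof reduces to a direct transcription of the argument of Proposition \ref{prop:BV_SpaceTime}, and I would present it succinctly by referring to that proof and pointing out only the replacement of the source-step estimate above.
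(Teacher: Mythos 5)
Your proposal is correct and follows essentially the same route as the paper: there, too, the only Model-2-specific ingredient is the bound on the source increment $\left|\rho_{j}^{n+1}-\rho_{j}^{n+1/2}\right|$, which the paper estimates (writing the max as $\tfrac12(\rho+R+|\rho-R|)$) by $\Delta t\|q_{\mathrm{on}}\|_{\L{\infty}([0,T])}\bigl(\mathbf{1}_{\mathrm{on},j}+\mathbf{1}_{\mathrm{on},j}|\rho_{j}^{n+1/2}|\bigr)+\Delta t\|q_{\mathrm{off}}\|_{\L{\infty}([0,T])}\mathbf{1}_{\mathrm{off},j}|\rho_{j}^{n+1/2}|$, i.e.\ exactly \eqref{dif_rhon+1_rhon1/2}, and then declares the rest analogous to Proposition \ref{prop:BV_SpaceTime}. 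Your slightly sharper bound $1-\max\{\rho_{j}^{n+1/2},R_{\mathrm{on},j}^{n+1/2}\}\le 1$ is only a cosmetic difference, still absorbed by $\mathcal{C}_{xt}(T)$, so the two arguments coincide.
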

\begin{proof}
For this proof we need to compute the following estimate,
\begin{eqnarray*}
\left|\rho_{j}^{n+1}-\rho_{j}^{n+1/2}\right|&\leq&\Delta t\left|S_{\mathrm{on},j}^{n+1/2}-S_{\mathrm{off},j}^{n+1/2}\right|\\
&\leq&\Delta t\mathbf{1}_{\mathrm{on},j}\left\|q_{\mathrm{on}}\right\|_{\L{\infty}([0,T])}\left|1-\frac{1}{2}\left(\rho_{j}^{n+1/2}+R_{\mathrm{on},j}^{n+1/2}+\left|\rho_{j}^{n+1/2}-R_{\mathrm{on},j}^{n+1/2}\right|\right)\right|\\
&&+\Delta t \mathbf{1}_{\mathrm{off},j}\left\|q_{\mathrm{off}}\right\|_{\L{\infty}([0,T])}\left|\rho_{j}^{n+1/2}\right|\\
&\leq&\Delta t\mathbf{1}_{\mathrm{on},j}\left\|q_{\mathrm{on}}\right\|_{\L{\infty}([0,T])}\left|1-\frac{1}{2}\left(\rho_{j}^{n+1/2}+\cancel{R_{\mathrm{on},j}^{n+1/2}}-\cancel{\left|R_{\mathrm{on},j}^{n+1/2}\right|}+\left|\rho_{j}^{n+1/2}\right|\right)\right|\\
&&+\Delta t \mathbf{1}_{\mathrm{off},j}\left\|q_{\mathrm{off}}\right\|_{\L{\infty}([0,T])}\left|\rho_{j}^{n+1/2}\right|\\
&\leq&\Delta t\mathbf{1}_{\mathrm{on},j}\left\|q_{\mathrm{on}}\right\|_{\L{\infty}([0,T])}\left(1+\left|\rho_{j}^{n+1/2}\right|\right)+\Delta t \mathbf{1}_{\mathrm{off},j}\left\|q_{\mathrm{off}}\right\|_{\L{\infty}([0,T])}\left|\rho_{j}^{n+1/2}\right|\\
&\leq&\Delta t\|q_{\mathrm{on}}\|_{\L{\infty}([0,T])}\left(\mathbf{1}_{\mathrm{on},j}+\mathbf{1}_{\mathrm{on},j}\left|\rho_{j}^{n+1/2}\right|\right)+\Delta t\left\|q_{\mathrm{off}}\right\|_{\L{\infty}([0,T])}\mathbf{1}_{\mathrm{off},j}\left|\rho_{j}^{n+1/2}\right|,
\end{eqnarray*}
this case reduces to \eqref{dif_rhon+1_rhon1/2}.\\
The rest of the proof is analogous to Proposition \ref{prop:BV_SpaceTime}.
\end{proof}

\section{Numerical experiments}\label{Sec:Num_Exp}
In this section we present some numerical examples to describe the effects that the ramps have on a road. We solve Model 1 and Model 2 by means \textbf{Algorithm \ref{Algorithm_Scheme}} with the terms $S_{\mathrm{on}}$  \eqref{Son_def1} and \eqref{Son_def2}, respectively. In all numerical examples below, we consider one on-ramp and one off-ramp, both ramps with length $L=0.1$, the on-ramp is located from $x=1.0$ until $x=1.1$,  the off-ramp is located from $x=3$ until $x=3.1$ and we  consider the following kernel functions 
\begin{eqnarray*}
\omega_{\eta}(x)&:=&2\frac{\eta-x}{\eta^2},\\
\omega_{\eta,\delta}(x)&:=& \frac{1}{\eta^6}\frac{16}{5\pi}\left(\eta^2-(x-\delta)^2 \right)^{5/2},
\end{eqnarray*}
for convective and reactive term respectively, with $\eta\in[0,1]$ and $\delta\in[-\eta,\eta]$.

\subsection{Example 1:} Dynamic of Model 1 vs. Model 2.\\
In this example we show numerically the behavior of the density of vehicles in a main road with the presence of one on-ramp and one off-ramp. We solve \eqref{nonRTM} numerically in the interval $[-1,9]$ in simulated times $T=0.5,\ T=2,\ T=5,\ T=7 $. We consider $\Delta x=1/1000$, $\eta=0.05,\ \delta=-0.01$, a constant initial condition $\rho_{0}(x)=0.3$, and the rate of the on- and off-ramp are given by $q_{\mathrm{on}}(t)=1.2,\ q_{\mathrm{off}}(t)=0.8$, respectively.\\
In Fig.\ref{fig:Son13_vs_Son14} we can see that when vehicles enter the ramp, the density of vehicles on the main road increases and a shock wave with negative speed is formed, after that, a rarefaction wave appears and when some vehicles leave the main road through off-ramp a shock wave with positive speed is formed. In particular we can observe a difference between the maximum density that is reached in each model, which may be due to the presence of the term $1-\rho$ in the Model 1.
\begin{figure}[htbp]
     \hspace*{-1pt}
  \begin{tabular}{cc}
    (a)&(b)\\
\includegraphics[scale=0.45]{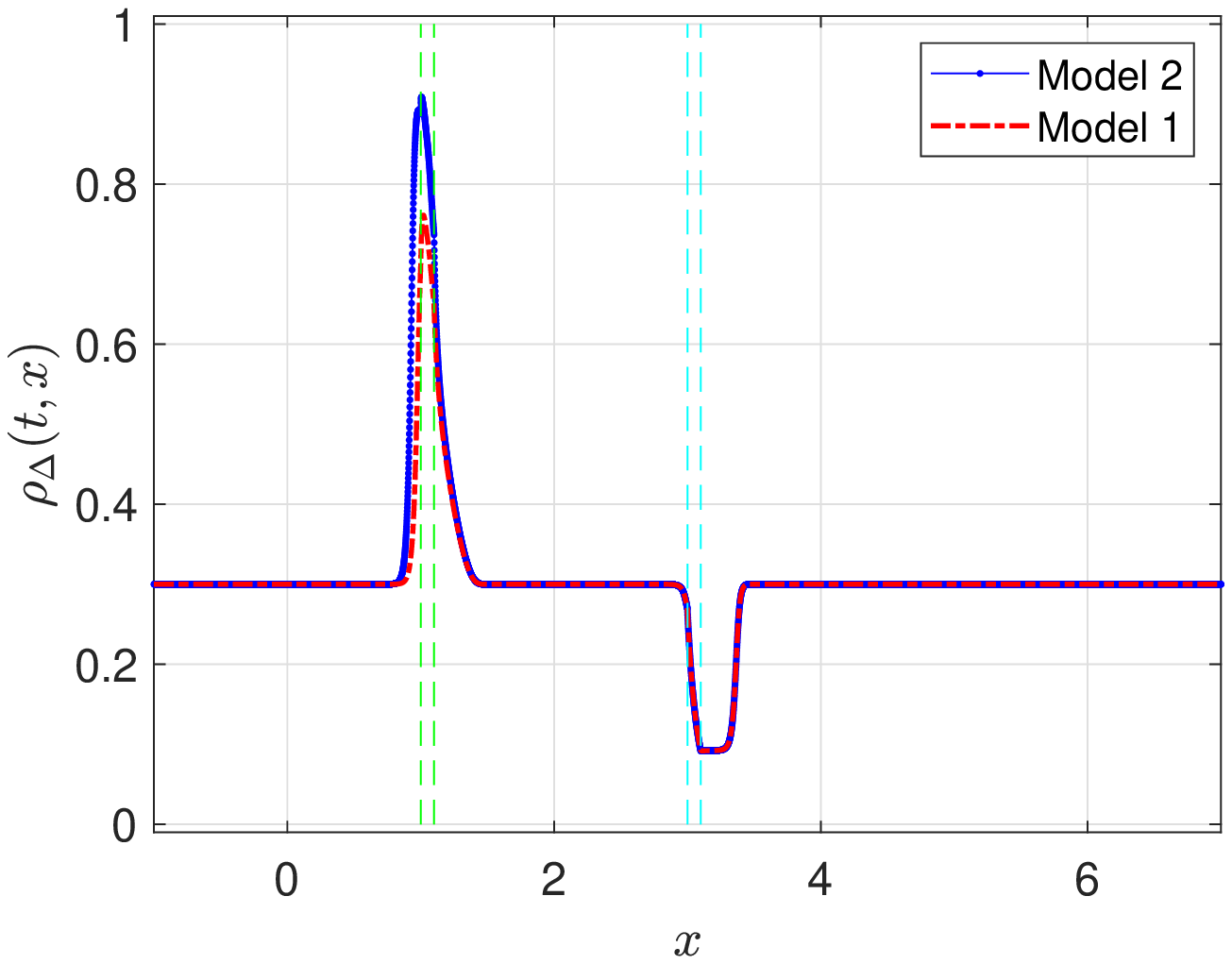}&
\includegraphics[scale=0.45]{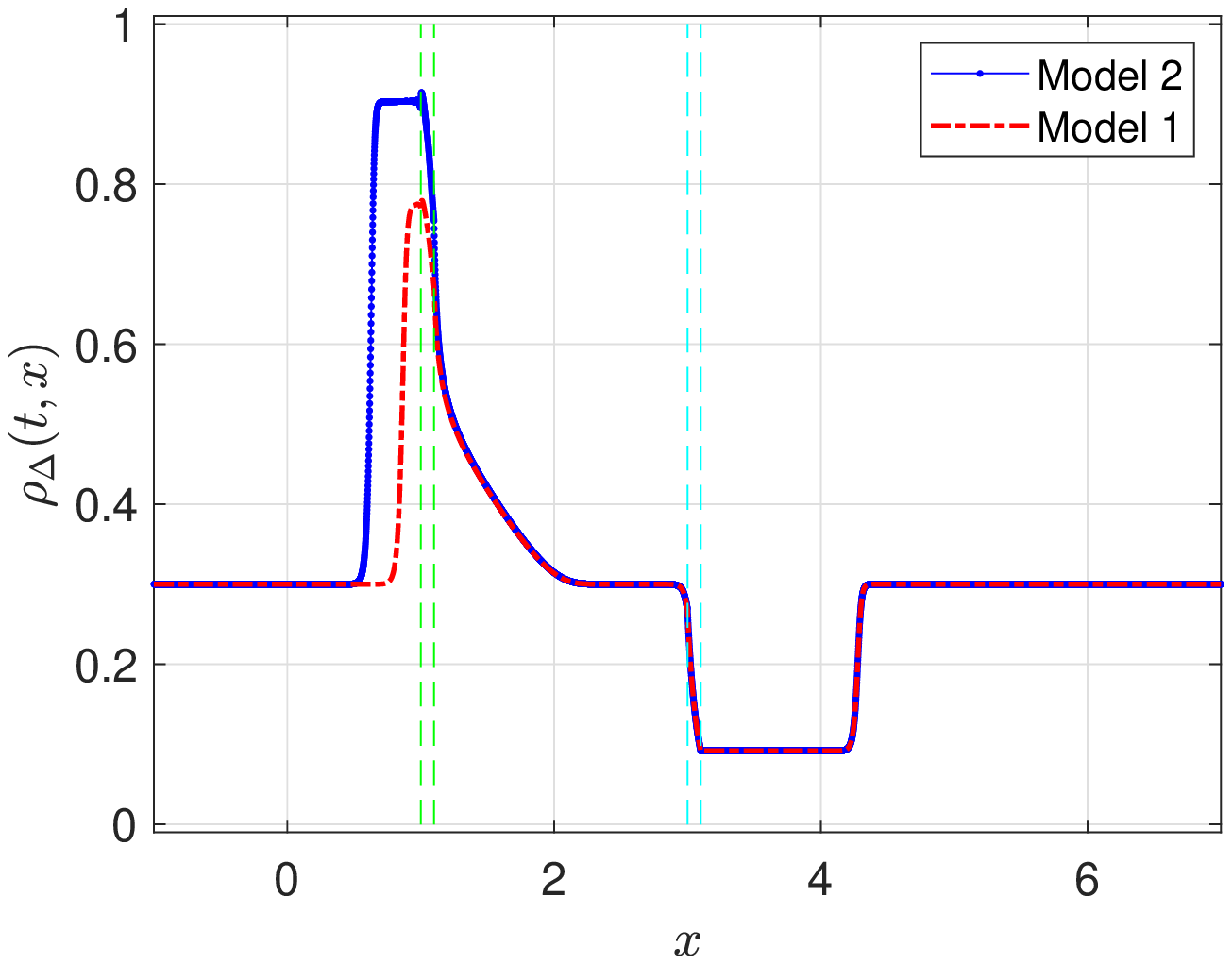}\\
  (c)&(d)\\
\includegraphics[scale=0.45]{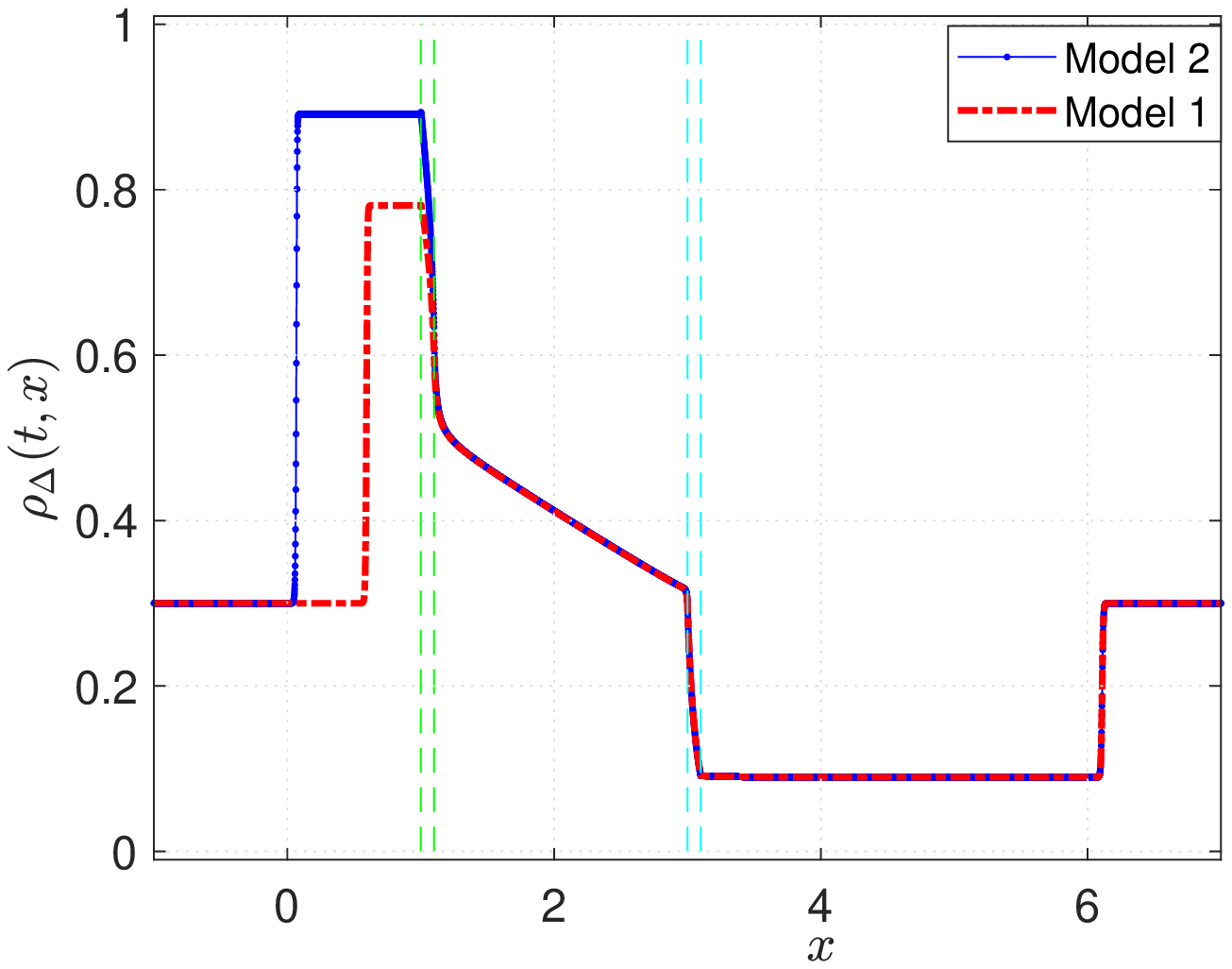}&
\includegraphics[scale=0.45]{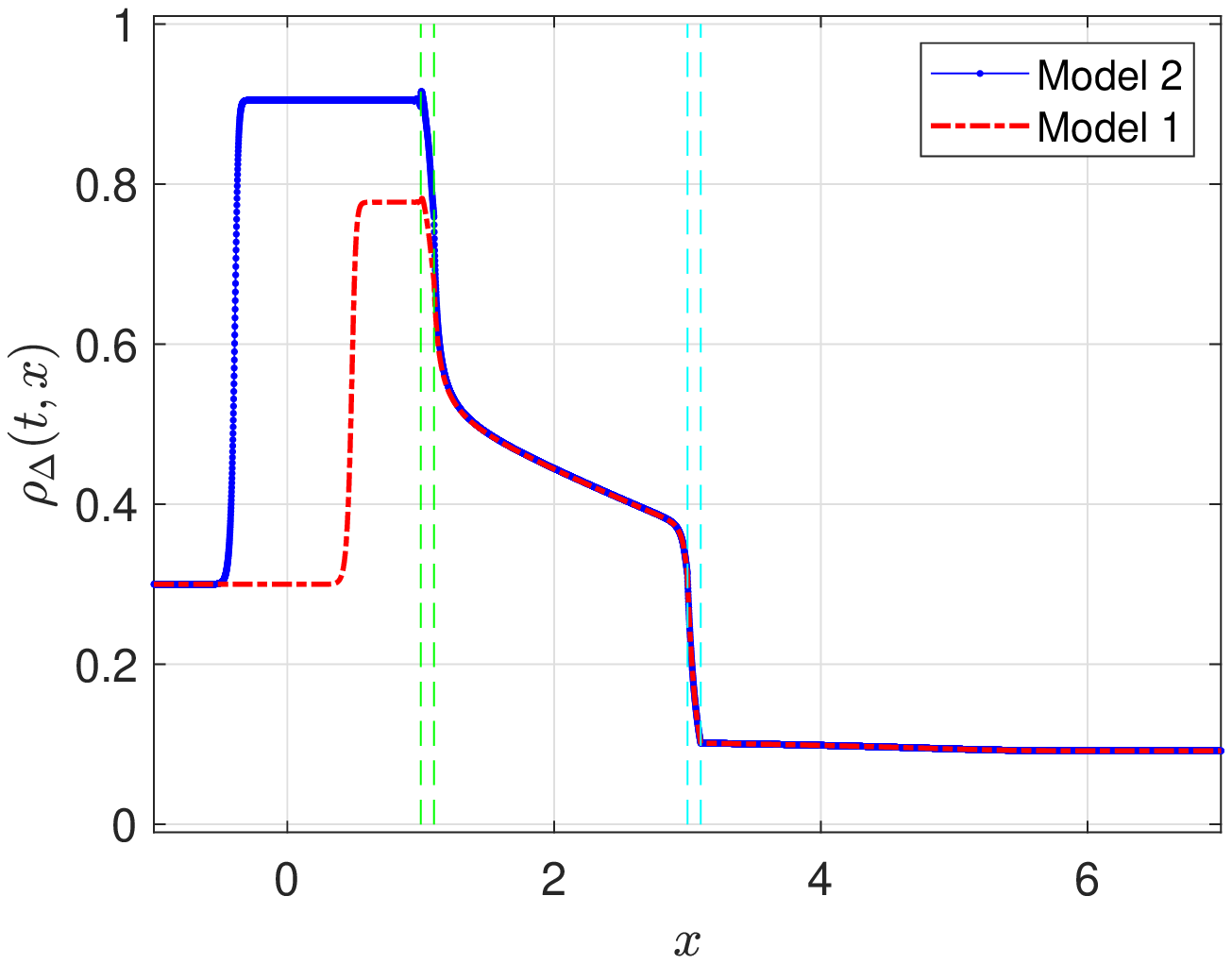}
\end{tabular}
     \caption{Example 1. Numerical approximations of the problem \eqref{nonRTM}. Dynamic of  Model 1 vs. Model 2  at (a)$T=0.5$, (b)$T=2$, (c)$T=5$, (d)$T=7$.}
     \label{fig:Son13_vs_Son14}
 \end{figure}
 \subsection{Example 2:} limit $\eta\to0$ in Model 2.\\
 In this example we take a look at the limit case $\eta\to0$ and investigate the convergence of the Model 2 to the solution of the local problem \eqref{RTM}-\eqref{soff_local}. In particular, we consider the initial condition $\rho_{0}(x)=0.3$ for $x\in[0,1]$, $q_{\mathrm{on}}(t)=1.2, \ q_{\mathrm{off}}(t)=0.8$ at $T=5$ with fixed $\Delta x =1/1000$ and $\eta\in\{0.1,0.05,0.01,0.004\}$, and $\delta=0$. To evaluate the convergence, we compute the $\L{1}$ distance between the approximate solution obtained for the proposed upwind-type scheme by means \textbf{Algorithm \ref{Algorithm_Scheme}} with a given $\eta$ and the result of a classical Godunov scheme for the corresponding local problem. In Table \ref{tab:nonlocal_to_local}, we can observe that the $\L{1}$ distance goes to zero when $\eta\to0.$ The results are illustrated in Fig.\ref{fig:Local_Vs_Nonlocal_Version}.
 \begin{table}[htbp]
     \centering
     \begin{tabular}{|c|c|c|c|c|}
     \hline
     $\eta$ & 0.1 &0.05&0.01&0.004\\ \hline
         $\L{1}$ distance & 2.8e-1&1.6e-1&3.6e-2&1.1e-2 \\ \hline
     \end{tabular}
     \caption{Example 2. $\L{1}$ distance between the approximate solutions to the non-local problem and the local problem for different values of $\eta$ at $T=5$ with $\Delta x=1/1000$.}
     \label{tab:nonlocal_to_local}
 \end{table}
 
 \begin{figure}[htbp]
     \centering
     \includegraphics[scale=0.5]{}
     \caption{Example 2. Numerical approximations of the problem \eqref{nonRTM} at $T=5$. Comparison of local and non-local versions of the model \eqref{nonRTM} with $\delta=0$ and different values for $\eta$.}
     \label{fig:Local_Vs_Nonlocal_Version}
 \end{figure}
  \subsection{Example 3:} Maximum principle.\\
In this example we verify that the \textbf{Algorithm \ref{Algorithm_Scheme}} with the terms $S_{\mathrm{on}}$ \eqref{Son_def1} and \eqref{Son_def2} satisfy the maximum principle, i.e., we verify numerically that Lemmas \ref{Lemma:Maximum_pple} and \ref{Max_ppio_Son2} respectively, are fulfilled. On the other hand, we also verify that the \textbf{Algorithm \ref{Algorithm_Scheme}} with a discretization of the term $S_{\mathrm{on}}$ \eqref{nolreact1}, which we called Model 0, does not satisfy a maximum principle. For this purpose we consider the initial condition given by 
\begin{equation*}
\rho_{0}(x) = \left\{ \begin{array}{lcc}
             0.1 &   \text{if}  & x \leq 1.1 \\
             0.9 &  \text{if}  & x > 1.1,
             \end{array}
   \right.  
   \end{equation*}
$q_{\mathrm{on}}(t)=1,\ q_{\mathrm{off}}(t)=0.2$ at $T=0.3$, with $\Delta x=1/100$, $\eta=0.05,$ and $\delta=-0.01$.
We can see in Fig.\ref{fig:Maximum_pple} (a) that the Model 0 does not satisfy a maximum principle unlike Model 1 and  Model 2. The Fig\ref{fig:Maximum_pple} (b) is a zoom of (a) in which we can appreciate in a better form that Model 0 does not satisfy a maximum principle. 
 \begin{figure}[htbp]
     \hspace*{-1pt}
  \begin{tabular}{cc}
    (a)&(b)\\
\includegraphics[scale=0.45]{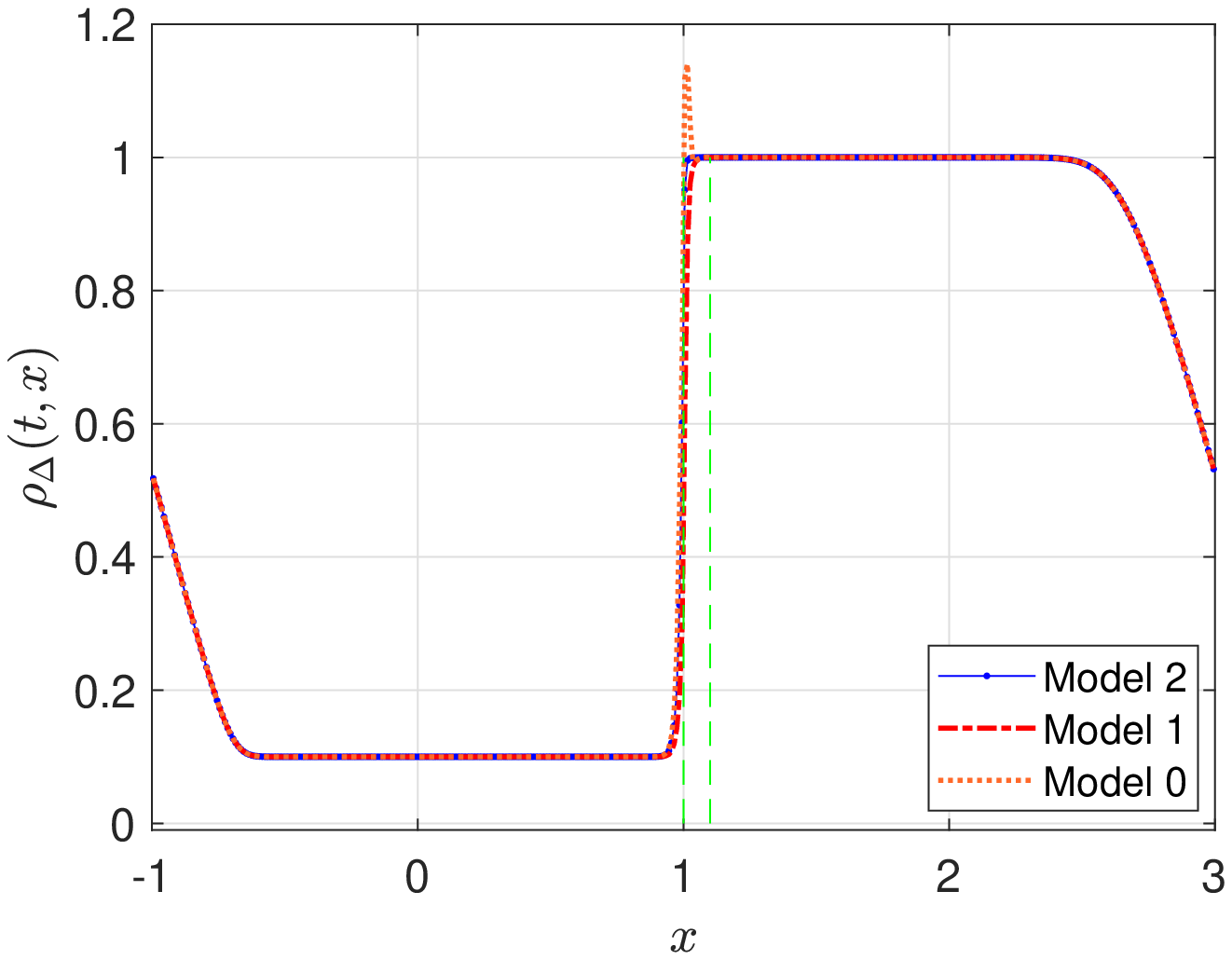}&
\includegraphics[scale=0.45]{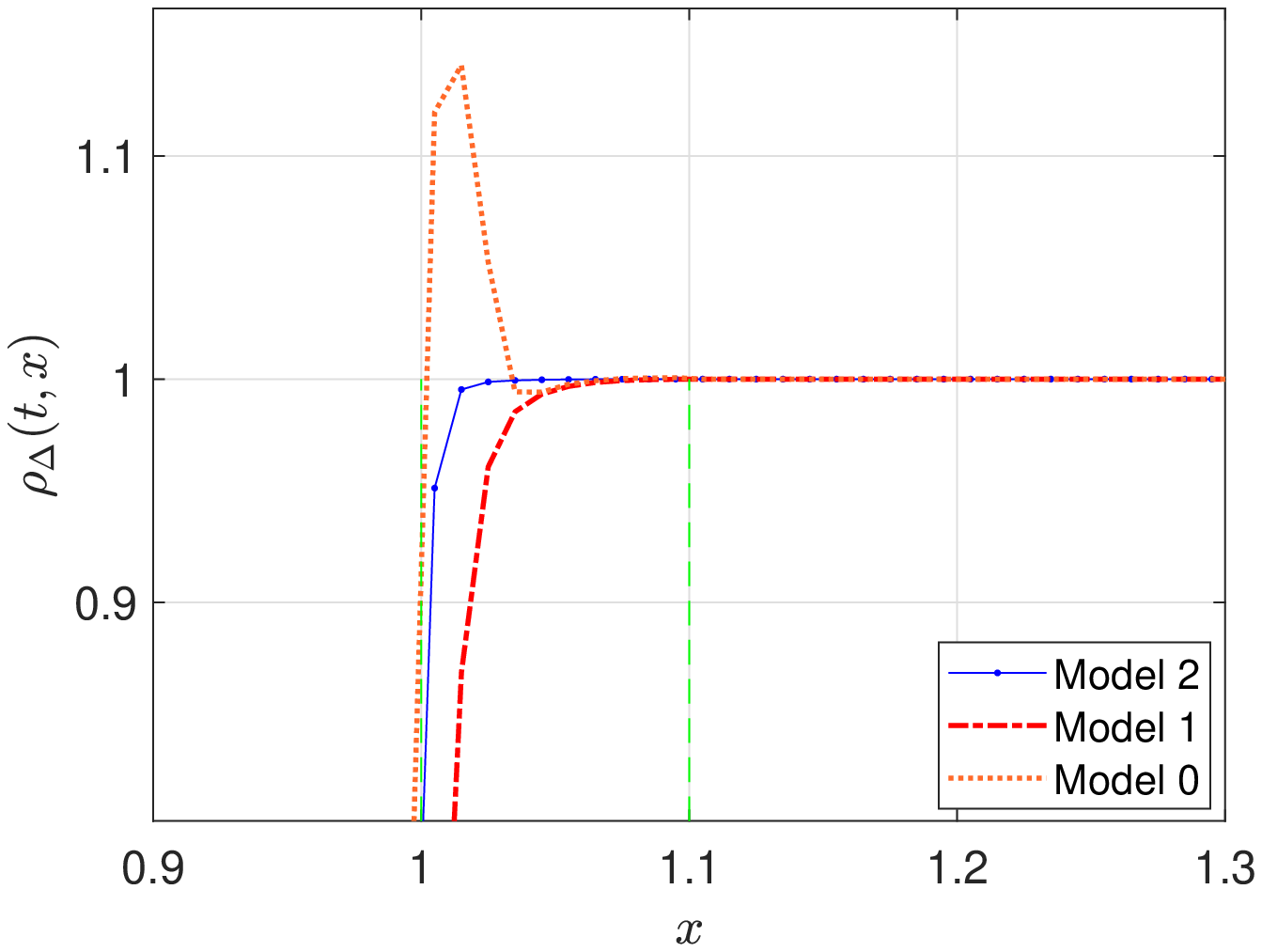}
 \end{tabular}
     \caption{Example 3. Numerical approximation at time $T=0.3$. (a) Model 1, Model 2 satisfying a maximum principle and Model 0 not satisfying a maximum principle. (b) Zoom of a part of (a).}
     \label{fig:Maximum_pple}
 \end{figure}
 \subsection{Example 4:} Free main road.\\
In this example we consider a free main road, i.e, we consider a initial condition $\rho_{0}=0$,  boundary conditions $\rho_{0}(t)=0.4$ for all $t>0$ and absorbing conditions at $x=5$. We also consider the rate of the on-ramp $q_{\mathrm{on}}(t)=\frac{1}{2}\left(\sin(\pi t)+1\right)$ and the rate of the off-ramp $q_{\mathrm{off}}(t)=0.2$.  We solve \eqref{nonRTM} numerically in the interval $[-1,5]$ in different times, namely $T=1,\ T=2,\ T=5,\ T=7 $ and consider $\Delta x=1/1000$, $\eta=0.1,\ \delta=-0.02$. In Fig.\ref{fig:Son13_vs_Son14_Ex4} we can see the dynamic of the model \ref{nonRTM} approximated by means of Model 1 and Model 2. 
 
 \begin{figure}[htbp]
     \hspace*{-1pt}
  \begin{tabular}{cc}
    (a)&(b)\\
\includegraphics[scale=0.45]{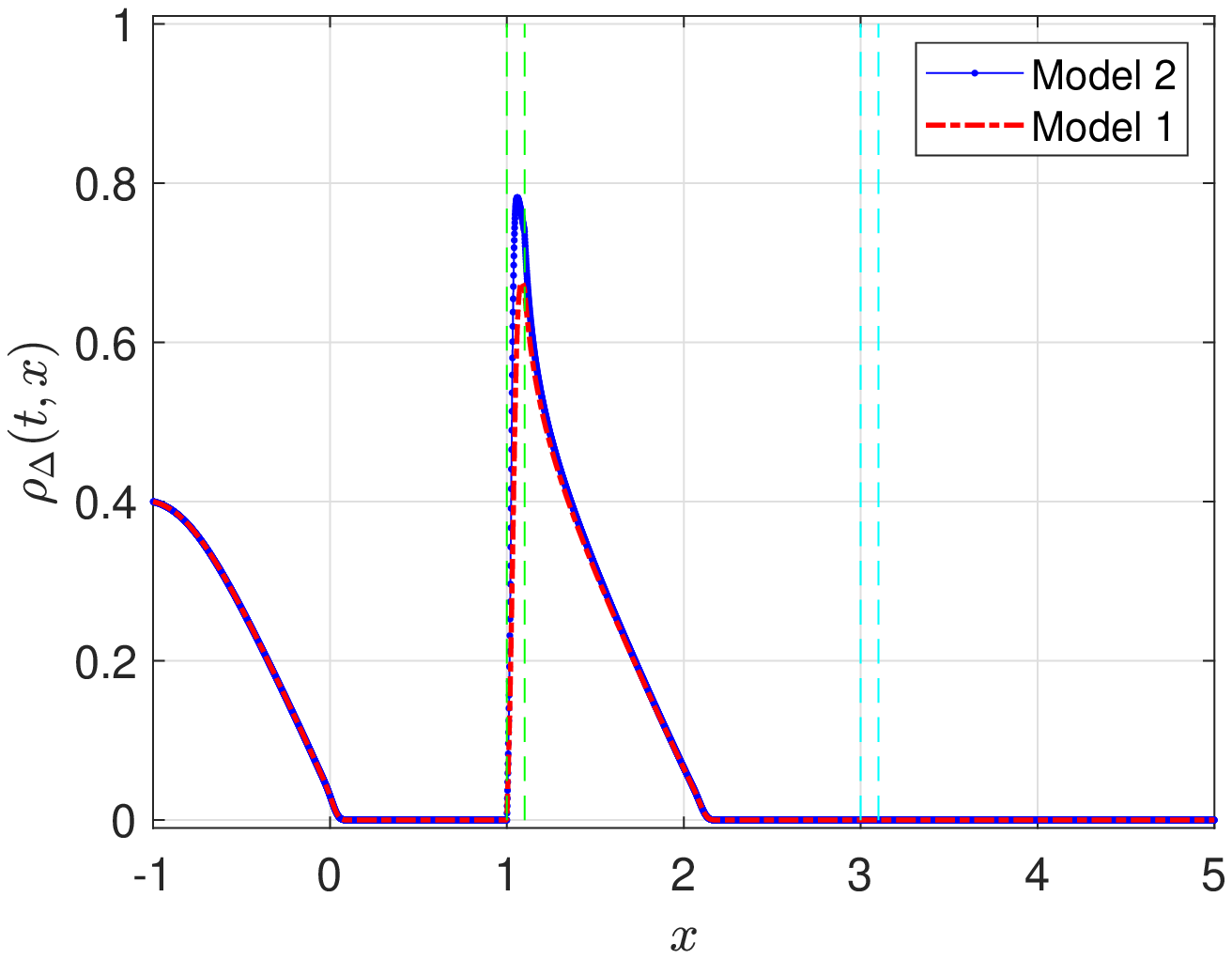}&
\includegraphics[scale=0.45]{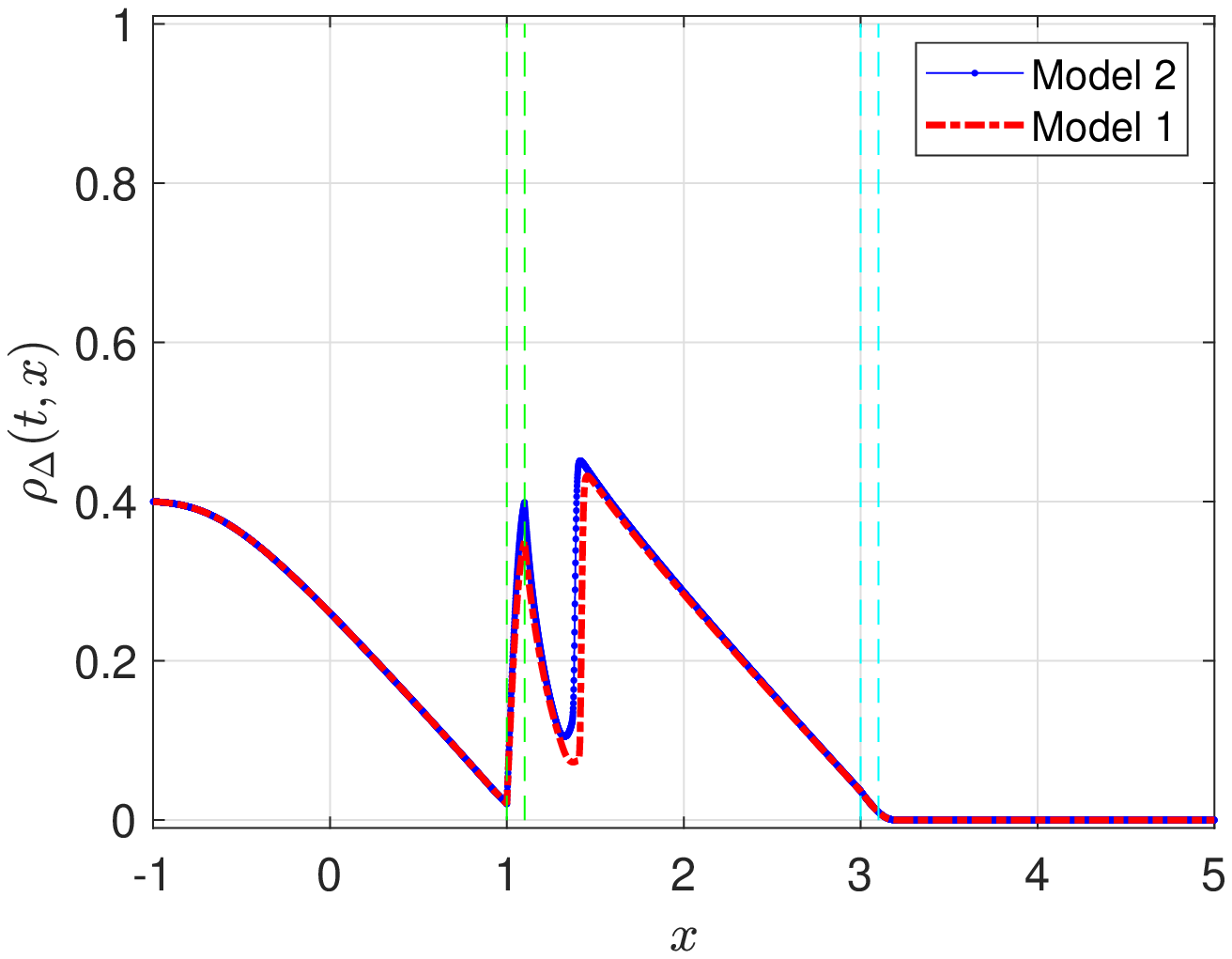}\\
  (c)&(d)\\
\includegraphics[scale=0.45]{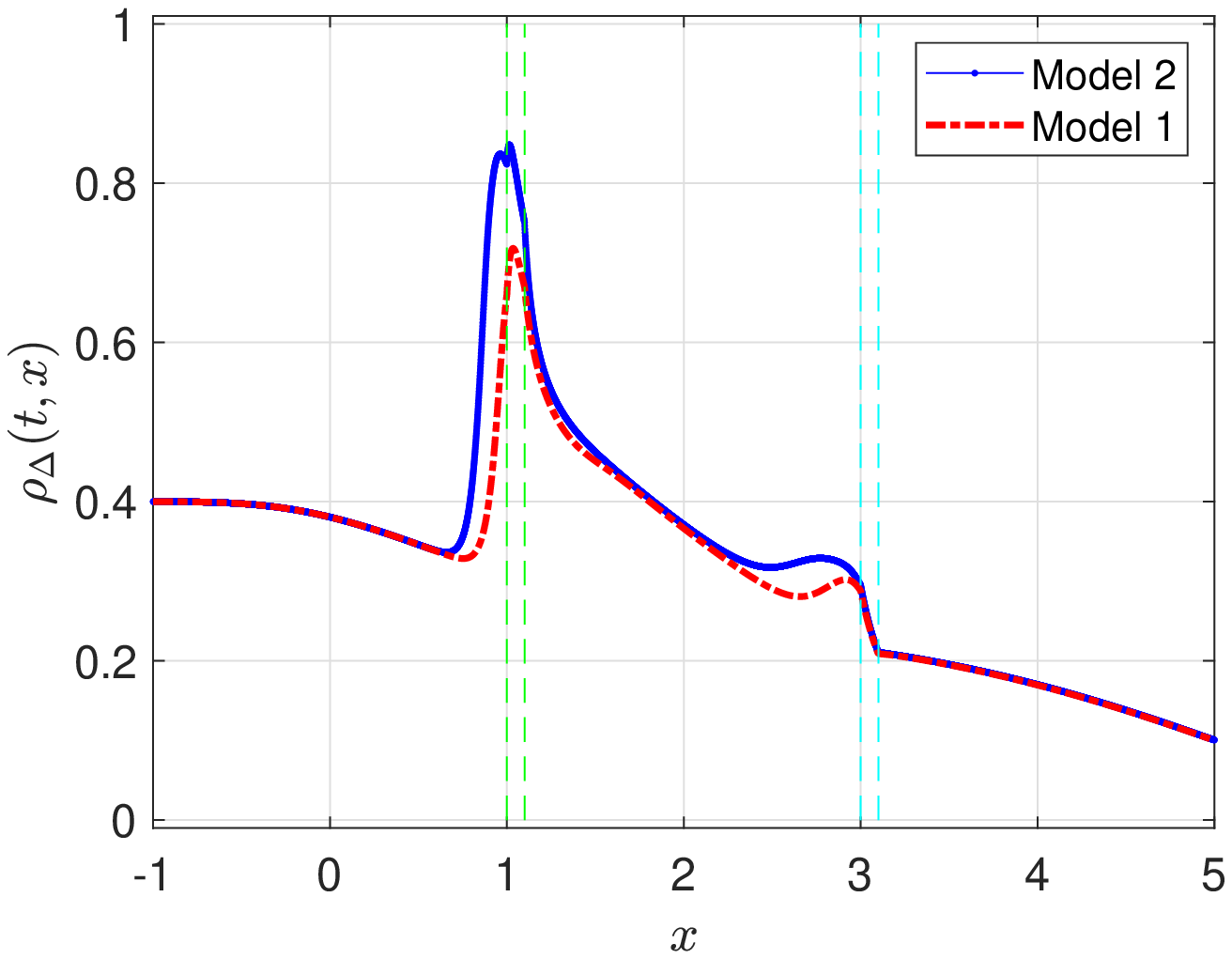}&
\includegraphics[scale=0.45]{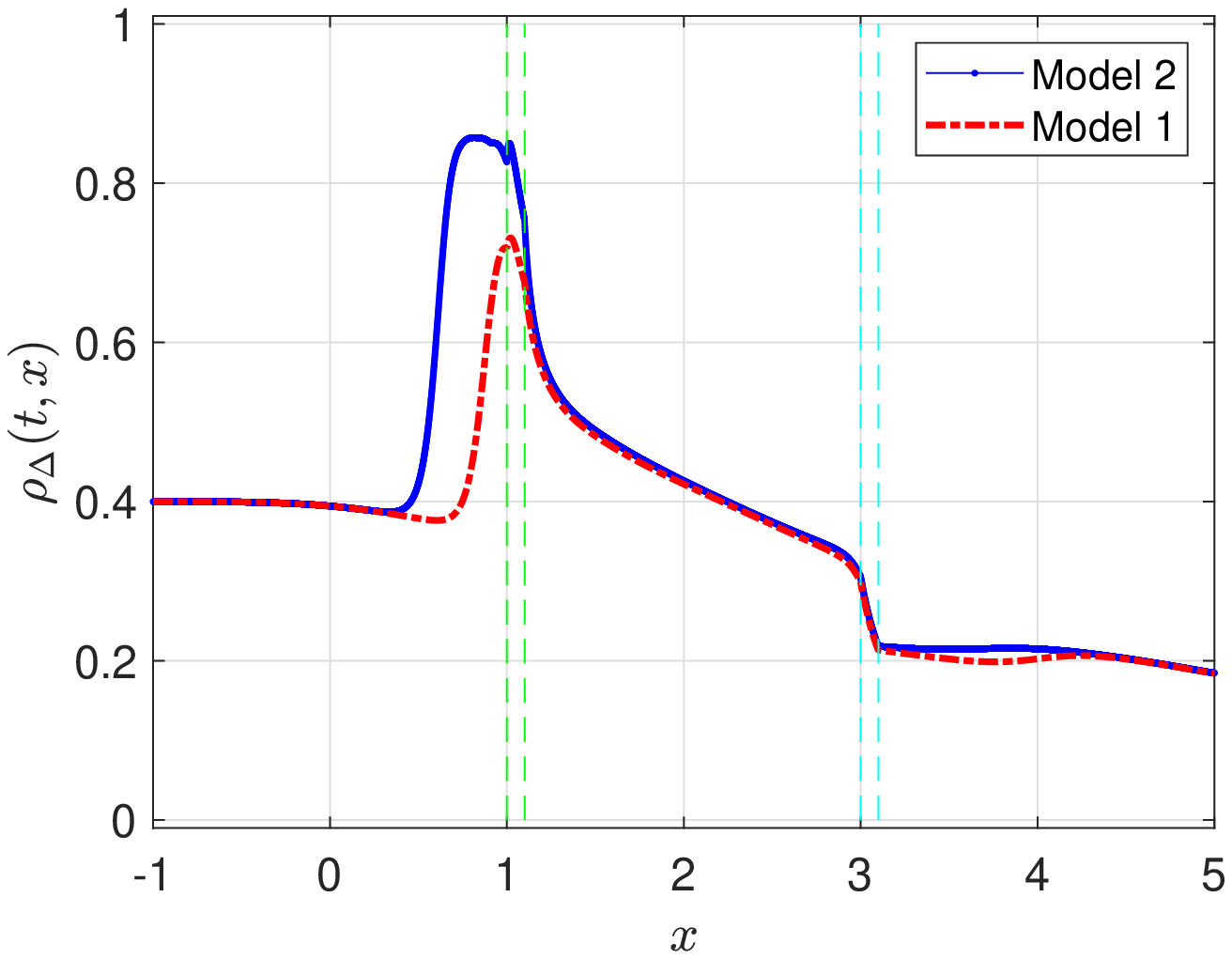}
\end{tabular}
     \caption{Example 4. Dynamic of the model \eqref{nonRTM}. Behavior of the numerical solution computed with \textbf{Algorithm \ref{Algorithm_Scheme}} by means of Model 1 and Model 2 at time (a)$T=1$, (b)$T=2$, (c)$T=5$, (d)$T=7$.}
     \label{fig:Son13_vs_Son14_Ex4}
 \end{figure}

\section{Conclusion and perspectives}
In this paper we introduced a nonlocal balance law to model vehicular traffic flow including on- and off-ramps. We presented three different models called Model 0, Model 1 and Model 2 and we proved existence and uniqueness of solutions for Model 1 and Model 2. We approximated the problem through a upwind-type numerical scheme, providing a Maximum principle, $\L{1}$ and $\BV$ estimates for approximate solutions. Numerical simulations illustrate the dynamics of the studied models and show that Model 0 does not satisfy a maximum principle. A limit model as the kernel support tends to zero is numerically investigated. In a future work, we would like to consider a nonlocal version of second order model proposed in \cite{tie2009new}.

\section*{Acknowledgments}
FAC acknowledges support from “Compagnia di San Paolo” (Torino, Italy).  LMV acknowledges partial support from ANID-Chile through Fondecyt project 1181511 and project AFB170001 of the PIA Program: Concurso Apoyo a Centros Cient\'ificos y Tecnol\'ogicos de Excelencia con Financiamiento Basal.
HDC and LMV  are supported by the INRIA Associated Team ``Efficient numerical schemes for non-local transport phenomena'' (NOLOCO; 2018--2020). HDC was partially supported by the National Agency for Research and Development, ANID-Chile through Scholarship Program, Doctorado Becas Chile 2021, 21210826.

{
\bibliographystyle{siam}
\bibliography{bibliography}}
\end{document}